\numberwithin{equation}{section}
\newtheorem{theorem}{Theorem}[section]
\newtheorem{proposition}[theorem]{Proposition}
\newtheorem{corollary}[theorem]{Corollary}
\newtheorem{lemma}[theorem]{Lemma}
\newtheorem{conjecture}[theorem]{Conjecture}
\newtheorem{problem}[theorem]{Problem}
\newtheorem{example}[theorem]{Example}
\newtheorem{defn}[theorem]{Definition}
\theoremstyle{definition}
\newcommand{\maj}{{\mathrm {maj}}}
\newcommand{\inv}{{\mathrm {inv}}}
\newcommand{\minimaj}{{\mathrm {minimaj}}}
\newcommand{\Des}{{\mathrm {Des}}}
\newcommand{\Val}{{\mathrm {Val}}}
\newcommand{\Rise}{{\mathrm {Rise}}}
\newcommand{\Stir}{{\mathrm {Stir}}}
\newcommand{\Hilb}{{\mathrm {Hilb}}}
\newcommand{\CC}{{\mathbb {C}}}
\newcommand{\ZZ}{{\mathbb {Z}}}
\newcommand{\OP}{{\mathcal{OP}}}
\newcommand{\AAA}{{\mathcal{A}}}
\begin{document}

\title[Ordered set partition statistics and the Delta Conjecture]
{Ordered set partition statistics and the Delta Conjecture}

\author{Brendon Rhoades}
\address
{Department of Mathematics \newline \indent
University of California, San Diego \newline \indent
La Jolla, CA, 92093-0112, USA}
\email{bprhoades@math.ucsd.edu}

\begin{abstract}
The  Delta Conjecture of Haglund, Remmel, and Wilson is a recent generalization of the 
Shuffle Conjecture in the field of diagonal harmonics.  In this paper we give evidence for the Delta 
Conjecture by proving a pair of conjectures of Wilson and Haglund-Remmel-Wilson which give
 equidistribution results for statistics related to inversion count and
major index on objects related to ordered set partitions.  Our results generalize the 
famous result of MacMahon that major index and inversion number share the same distribution on 
permutations.  
\end{abstract}

\keywords{permutation statistic, ordered set partition, major index, symmetric function}
\maketitle

\section{Introduction}
\label{Introduction}

The recently proven Shuffle Conjecture asserts a formula for the bigraded
Frobenius character of the module of diagonal harmonics in terms of 
a Macdonald eigenoperator and the 
combinatorics of parking functions \cite{CM, HHRLU}.
The Delta Conjecture is a still-open generalization of the Shuffle Conjecture due to Haglund, Remmel,
and Wilson \cite{HRW}.  
This paper proves combinatorial results about statistics on ordered set partitions to obtain a specialization
of the Delta Conjecture. 
We begin by reviewing the Delta Conjecture
and then turn our attention to combinatorics (where it will remain for the rest of the paper).

\subsection{The Delta Conjecture}
Let $\Lambda$ denote the ring of symmetric functions in the variable set $x = (x_1, x_2, \dots )$ with
coefficients in the field 
$\mathbb{Q}(q, t)$.   For any partition $\mu$, let $\tilde{H}_{\mu} = \tilde{H}_{\mu}(x; q, t) \in \Lambda$ 
denote the corresponding 
modified Macdonald symmetric function; it is known that 
\begin{equation*}
\{ \tilde{H}_{\mu} \,:\, \text{$\mu$ a partition}\} 
\end{equation*}
is a basis
for $\Lambda$.

For any symmetric function $f = f(x_1, x_2, \dots) \in \Lambda$,  define the
{\em Delta operator} 
$\Delta'_f: \Lambda \rightarrow \Lambda$ to be the Macdonald eigenoperator given by
\begin{equation}
\Delta'_f: \tilde{H}_{\mu} \mapsto f[B_{\mu}(q,t) - 1] \tilde{H}_{\mu}.  
\end{equation}
Here we are using the plethystic 
shorthand $f[B_{\mu}(q,t) - 1] = f( \dots, q^{i-1} t^{j-1}, \dots )$, where 
$(i, j)$ ranges over the matrix coordinates of all cells  $\neq (1,1)$ in the (English)
Young diagram of $\mu$.

For example, suppose $\mu = (3,2,1) \vdash 6$ is the partition whose Young diagram is shown below. 
We have that
\begin{equation*}
\Delta'_f: \tilde{H}_{(3,2,1)} \mapsto f[B_{(3,2,1)}(q,t) - 1] \tilde{H}_{(3,2,1)} = f(q, q^2, t, qt, t^2) \tilde{H}_{(3,2,1)},
\end{equation*}
where we implicitly set $x_n = 0$ in $f(x_1, x_2, \dots )$ for all $n > 5$.

\begin{center}
\begin{Young}
 $\bullet$  & $q$ & $q^2$ \cr
 $t$  & $qt$  \cr 
 $t^2$
\end{Young}
\end{center}

Let $e_d \in \Lambda$ denote the elementary symmetric function of degree $d$.
The Delta Conjecture (\cite[Conjecture 1.1]{HRW}) asserts a combinatorial formula for the 
application of $\Delta'_{e_k}$ to $e_n$ for all nonnegative integers $k < n$.

\begin{conjecture}
{\em (The Delta Conjecture)}
For any integers $n > k \geq 0$,
\begin{align}
\Delta'_{e_k} e_n &= \{z^{n-k-1}\} \left[ \sum_{P \in \mathcal{LD}_n} q^{\mathrm{dinv}(P)} t^{\mathrm{area}(P)}
\prod_{i: a_i(P) > a_{i-1}(P)} \left( 1 + z/t^{a_i(P)} \right) x^P \right] \\
&= \{z^{n-k-1}\} \left[ \sum_{P \in \mathcal{LD}_n} q^{\mathrm{dinv}(P)} t^{\mathrm{area}(P)}
\prod_{i \in \mathrm{Val}(P)} \left( 1 + z/t^{d_i(P) + 1} \right) x^P \right].
\end{align}
Here the operator $\{z^{n-k-1}\}$ extracts the coefficient of $z^{n-k-1}$.
\end{conjecture}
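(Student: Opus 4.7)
The plan is to prove the Delta Conjecture by interpreting both sides as $z$-refinements of the Shuffle Theorem, since the $k = n-1$ endpoint of the conjecture is exactly the Shuffle Theorem proved by Carlsson and Mellit. Their proof builds a dictionary between labeled Dyck paths and elements of the positive part of the elliptic Hall (Schiffmann) algebra acting on the vacuum; my goal is to $z$-deform this dictionary so that both sides of the displayed identity become coefficients, in a single generating symmetric function, of a $z$-parametrized DAHA element.

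First I would repackage the left-hand side. Because $\Delta'_{e_k}$ is the Macdonald eigenoperator with eigenvalue $e_k[B_\mu(q,t) - 1]$, the generating sum $\sum_{k \ge 0} z^{n-k-1} \Delta'_{e_k} e_n$ is the application of a single $z$-parametrized eigenoperator $\Delta'_{E_n(z)}$ to $e_n$, where $E_n(z)$ is an explicit polynomial in $z$ with coefficients among $e_0, \dots, e_{n-1}$. Next, on the combinatorial side, the first subgoal is to prove that the Rise and Valley formulations are equal by producing a statistic-preserving bijection on labeled Dyck paths that exchanges marked rises with marked valleys, extending the zeta map; at $q = t = 0$ this reduces to an equidistribution statement on ordered set partitions. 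After this reduction, I would attempt to match the $z$-refined combinatorial sum with $\Delta'_{E_n(z)} e_n$ by extending the Carlsson--Mellit functor to a two-colored alphabet $\{d_+, d_{+,z}, d_-\}$, verifying by induction on path length that the image of each word lies in the correct $z$-graded piece of the Schiffmann algebra. The base cases are the Shuffle Theorem (coefficient of $z^0$, known) and $\Delta'_{e_0} e_n = e_n$ (coefficient of $z^{n-1}$, trivial).

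The main obstacle is the induction step for intermediate $k$: while the two endpoints $k = 0$ and $k = n-1$ are handled, the operator $\Delta'_{e_k}$ for $0 < k < n-1$ does not correspond to a standard element of the Schiffmann algebra in the way that $\nabla = \Delta'_{e_{n-1}}$ does, so the Pieri-type identities driving the Carlsson--Mellit induction have no direct analogue. Overcoming this would require either a new family of Pieri rules adapted to the $z$-decorated Dyck path structure, or a direct combinatorial-to-algebraic matching via a generating-function manipulation that bypasses the compositional induction altogether. This is exactly the point at which the full Delta Conjecture has resisted previous attacks; realistically I expect the Rise/Valley combinatorial equivalence and the specializations at $q = 0$ or $t = 0$ to serve as intermediate milestones that calibrate the approach before confronting the main DAHA obstruction, rather than expecting a full resolution within the compass of this paper.
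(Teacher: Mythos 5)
The statement you are addressing is not proved in the paper at all: it is the Delta Conjecture itself, which the paper explicitly records as open and only supports with partial evidence (the equality $\Val_{n,k}(x;0,q) = \Val_{n,k}(x;q,0)$ and the equidistribution of $\inv$ and $\minimaj$ on $\OP_{\beta,k}$). So there is no proof of the paper's to compare yours against, and your submission is a research program rather than a proof. You say this yourself in the final paragraph: the induction step for intermediate $0 < k < n-1$ is exactly where the argument is missing, since $\Delta'_{e_k}$ does not correspond to a Schiffmann/DAHA element in the way $\nabla = \Delta'_{e_{n-1}}$ does, and no Pieri-type identities adapted to the $z$-decorated paths are supplied. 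Repackaging $\sum_k z^{n-k-1}\Delta'_{e_k}e_n$ as a single $z$-parametrized eigenoperator applied to $e_n$ is a formal rewriting and carries no content toward closing that gap. As it stands, the proposal establishes nothing beyond the two endpoints that were already known.

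Two of your intermediate subgoals are also stated as if they were within reach when they are themselves open at the level of generality you need. First, the claimed statistic-preserving bijection exchanging marked rises with marked valleys (extending the zeta map) would prove $\Rise_{n,k}(x;q,t) = \Val_{n,k}(x;q,t)$ for generic $q,t$; this is not known, and indeed the paper points out that even the symmetry of $\Val_{n,k}(x;q,t)$ in the $x$-variables is unknown, so leaning on this equivalence as a reduction step is unsupported. Second, your remark that at $q=t=0$ this reduces to an equidistribution statement on ordered set partitions inverts the logical situation: those equidistribution results (which the paper does prove, via the recursion $F_{n,\alpha}(q)$, the cycling lemmas, and the switch maps $t_i$) are consequences one can extract from the specialized combinatorics, not a stepping stone that feeds back into the generic $(q,t)$ identity. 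If you want to contribute something verifiable in the spirit of this paper, the concrete target is the specialized statement $\Val_{n,k}(x;0,q) = \Val_{n,k}(x;q,0)$, which the paper proves by showing $I_{\beta,k}(q) = M_{\beta,k}(q)$ using the $\minimaj$ recursion together with the weight-rearrangement invariance furnished by the switch maps; none of that machinery appears in your proposal.
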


We will not need the set $\mathcal{LD}_n$ of labeled Dyck paths,
nor the statistics $\mathrm{dinv}$, $\mathrm{area}$, $(a_1, \dots, a_n)$, $\mathrm{Val}$, and $(d_1, \dots, d_n)$
that they carry; for details on these objects see \cite{HRW}.
For our purposes it will be enough to know that the Delta Conjecture asserts an equality of 
quasisymmetric
functions in $x = (x_1, x_2, \dots )$ with coefficients depending on the parameters $q$ and $t$.

The two right hand sides of the Delta Conjecture display two  
quasisymmetric functions.  Following \cite{HRW},
we name  these quantities as follows.
\begin{align}
\Rise_{n,k}(x; q,t) &:= \{z^{n-k-1}\} \left[ \sum_{P \in \mathcal{LD}_n} q^{\mathrm{dinv}(P)} t^{\mathrm{area}(P)}
\prod_{i: a_i(P) > a_{i-1}(P)} \left( 1 + z/t^{a_i(P)} \right) x^P \right], \\
\Val_{n,k}(x; q,t) &:= \{z^{n-k-1}\} \left[ \sum_{P \in \mathcal{LD}_n} q^{\mathrm{dinv}(P)} t^{\mathrm{area}(P)}
\prod_{i \in \mathrm{Val}(P)} \left( 1 + z/t^{d_i(P) + 1} \right) x^P \right].
\end{align}
The notation here reflects the combinatorics; $\Rise_{n,k}(x; q,t)$ is governed by rises  
of  Dyck paths and $\Val_{n,k}(x; q,t)$ is governed by valleys of  Dyck paths; see
\cite{HRW} for details.

When $k = n-1$, the Delta Conjecture specializes to the Shuffle Conjecture of
Haglund, Haiman, Remmel, Loehr, and Ulyanov \cite{HHRLU}.
The Shuffle Conjecture gives an expansion of the symmetric function $\Delta'_{e_{n-1}} e_n$ into 
the fundamental basis of quasisymmetric functions, and therefore an expansion of the bigraded
Frobenius character
of the module of diagonal harmonics into this quasisymmetric basis.
It is an open problem to find a nice representation theoretic interpretation of $\Delta'_{e_k} e_n$ for general 
$k < n$.

Carlsson and Mellit proved the Shuffle Conjecture using
 a dazzling and groundbreaking array of plethystic operators and combinatorial techniques \cite{CM}.
However, the Delta Conjecture remains open for general $k < n$.  Various special cases of the Delta 
Conjecture were proven in \cite{HRW}.  One of these special cases is as follows; it concerns setting one of 
the variables $q, t$ equal to zero.

\begin{theorem}
\label{delta-q-t-zero-hrw} (\cite[Theorem 4.1]{HRW})
The coefficients of the monomial quasisymmetric function $M_{1^n}$ are equal in each of the following:
\begin{equation*}
\Rise_{n,k}(x; q, 0), \hspace{0.1in} 
\Rise_{n,k}(x; 0,q), \hspace{0.1in} 
\Val_{n,k}(x; q,0), \hspace{0.1in} 
 \Delta'_{e_k}e_n|_{t = 0}, \hspace{0.1in} 
\Delta'_{e_k} e_n|_{q = 0, t = q}.
\end{equation*}
\end{theorem}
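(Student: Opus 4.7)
The plan is to show that each of the five quantities in the theorem, after extracting the coefficient of $M_{1^n}$, equals a one-variable $q$-generating function of the form $\sum_{\pi \in \OP_{n,k+1}} q^{\mathrm{stat}(\pi)}$, where $\OP_{n,k+1}$ is the set of ordered set partitions of $[n]$ into $k+1$ blocks and $\mathrm{stat}$ is one of several equidistributed statistics; the theorem then follows from this equidistribution.

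For the three quasisymmetric quantities, taking the coefficient of $M_{1^n}$ restricts the labeled Dyck path sums to paths whose labeling is a permutation of $[n]$. Specializing $t = 0$ in $\Rise_{n,k}(x;q,t)$ forces each surviving rise factor $1 + z/t^{a_i}$ to have $a_i = 0$; combined with the extraction $\{z^{n-k-1}\}$, the sum becomes a $q^{\mathrm{dinv}}$-weighted enumeration of permutations in $\symm_n$ with $n-k-1$ marked ascents, which by cutting one-line notation at the marked positions is naturally in bijection with $\OP_{n,k+1}$. This produces an $\inv$-like generating function. Analogous specializations of $\Rise_{n,k}(x;0,q)$ (where $q=0$ forces $\mathrm{dinv}=0$, leaving a $t$-statistic of $\maj$-type) and of $\Val_{n,k}(x;q,0)$ (where the valley product selects different marked positions while $t=0$ forces $\mathrm{area}=0$) produce $\maj$- and $\minimaj$-type generating functions on $\OP_{n,k+1}$.

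For the two algebraic quantities, the coefficient of $M_{1^n}$ of a symmetric function equals the Hall inner product $\langle\,\cdot\,,h_1^n\rangle$. I would start from the eigenbasis formula $\Delta'_{e_k} e_n = \sum_\mu c_\mu \, e_k[B_\mu(q,t)-1] \tilde{H}_\mu(x;q,t)$, obtained by expanding $e_n$ in the modified Macdonald basis. The specialization $\tilde{H}_\mu(x;q,0)$ collapses to a scalar multiple of a product of complete homogeneous symmetric functions, and the plethystic factor $e_k[B_\mu(q,0)-1]$ becomes an explicit $q$-polynomial in the first column of $\mu$. Expanding the resulting sum and pairing with $h_1^n$ yields a $q$-enumeration of $\OP_{n,k+1}$ matching one of the combinatorial formulas from the previous step. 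The case $q=0$, $t=q$ is handled analogously via the symmetry $\tilde{H}_\mu(x;q,t) = \tilde{H}_{\mu'}(x;t,q)$.

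The theorem thus reduces to equidistribution of these statistics on $\OP_{n,k+1}$, generalizing MacMahon's $\inv = \maj$ theorem on $\symm_n$ (the case $k = n-1$, where $\OP_{n,n} \cong \symm_n$). This is the principal obstacle: Foata's second fundamental transformation on $\symm_n$ does not descend naively to $\OP_{n,k+1}$ since the blocks are internally unordered, so one must construct a block-respecting bijection from scratch, or else argue via inductive generating-function recursions in $n$ and $k$. This generalized MacMahon equidistribution is exactly the Wilson and Haglund--Remmel--Wilson conjecture established in the present paper.
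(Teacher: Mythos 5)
First, a point of orientation: the paper does not prove this statement at all; it is quoted verbatim from \cite[Theorem 4.1]{HRW}, and the present paper's contribution is Theorem~\ref{valley-function-equality}, which \emph{appends} $\Val_{n,k}(x;0,q)$ to the list via the new $\minimaj$--$\inv$ equidistribution. So there is no in-paper proof to compare against, and your attempt must be judged on its own. As it stands it has a genuine gap at its final step. You assign a ``$\minimaj$-type'' generating function to $\Val_{n,k}(x;q,0)$ and then close by reducing everything to ``the Wilson and Haglund--Remmel--Wilson conjecture established in the present paper.'' But by Theorem~\ref{valley-interpretation}, the statistic carried by $\Val_{n,k}(x;q,0)$ is $\inv$; $\minimaj$ is carried by $\Val_{n,k}(x;0,q)$, which is exactly the one quantity \emph{missing} from the list in Theorem~\ref{delta-q-t-zero-hrw}. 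The equidistributions actually needed to identify the five listed quantities (after extracting the coefficient of $M_{1^n}$, each equals $[k+1]!_q\Stir_{n,k+1}(q)$ up to reversal) involve $\inv$/$\mathrm{dinv}$ and Steingr\'imsson's $\maj$-type statistics, which were already available in \cite{Stein, RWSet, HRW}. Invoking the present paper's minimaj theorem is therefore both off-target and circular: the whole point of this paper is that Theorem~\ref{delta-q-t-zero-hrw} was known \emph{without} the minimaj statistic, and the new equidistribution is what upgrades it.

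Several of your intermediate specialization claims are also incorrect as stated and would need repair. At a rise one has $a_i(P)\geq 1$, so setting $t=0$ does not ``force $a_i=0$''; rather, after expanding the product, a term choosing a set $S$ of rises carries $t^{\mathrm{area}(P)-\sum_{i\in S}a_i(P)}$, and the $t=0$ survivors are those paths for which the chosen rises consume the entire area --- likewise for the valley version, where ``$t=0$ forces $\mathrm{area}=0$'' is not what happens. On the algebraic side, $\tilde{H}_{\mu}(x;q,0)$ is the modified Hall--Littlewood polynomial, whose Schur expansion involves cocharge Kostka--Foulkes coefficients; it is not a scalar multiple of a product of complete homogeneous symmetric functions. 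Moreover $B_{\mu}(q,0)-1=q+\cdots+q^{\mu_1-1}$ is governed by the first row of $\mu$, and one must also justify setting $t=0$ after applying $\Delta'_{e_k}$, since the coefficients expanding $e_n$ in the $\tilde{H}_{\mu}$ basis have nontrivial $(q,t)$-denominators. The overall shape of your argument (reduce each quantity to a $q$-count over $\OP_{n,k+1}$ and match the counts) is indeed the spirit of the proof in \cite{HRW}, but with the statistic misattributed, the reduction circular, and the specializations miscomputed, the proposal does not yet constitute a proof.
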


Not included in Theorem~\ref{delta-q-t-zero-hrw} is the expression $\Val(x; 0, q)$.  In Section~\ref{Words}
below, we will add it in by proving the following result.

\begin{theorem}
\label{valley-function-equality}
For all $n > k \geq 0$, we have the equality 
\begin{equation}
\Val_{n,k}(x; 0, q) = \Val_{n,k}(x; q, 0).
\end{equation}
Consequently, we can append $\Val(x; 0, q)$ to the list in Theorem~\ref{delta-q-t-zero-hrw}.
\end{theorem}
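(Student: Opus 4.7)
The plan is to reduce the identity $\Val_{n,k}(x;0,q) = \Val_{n,k}(x;q,0)$ to a purely combinatorial equidistribution statement on an explicit family of words or ordered set partitions, and then prove the latter by bijection in the spirit of MacMahon and Foata.

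The first step is to unpack the two specializations of the generating function
\begin{equation*}
\sum_{P \in \mathcal{LD}_n} q^{\mathrm{dinv}(P)} t^{\mathrm{area}(P)} \prod_{i \in \mathrm{Val}(P)} \left(1 + z/t^{d_i(P)+1}\right) x^P.
\end{equation*}
Setting $q=0$ and then $t=q$ forces $\mathrm{dinv}(P)=0$; this pins down the possible labeled Dyck paths (their label ordering on each column is forced), so after extracting $z^{n-k-1}$ and choosing the $n-k-1$ valleys that contribute the $z/q^{d_i+1}$ factor, $\Val_{n,k}(x;0,q)$ becomes a sum, indexed by a concrete family of decorated paths, of $x^P$ times a single monomial in $q$. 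Setting instead $t=0$ directly is more delicate: the negative powers of $t$ in the product must cancel $t^{\mathrm{area}(P)}$ exactly, which forces $\mathrm{area}(P) = \sum_{i \in S}(d_i(P)+1)$ for the selected subset $S$ of $n-k-1$ valleys, and the remaining weight is $q^{\mathrm{dinv}(P)}$. This step is bookkeeping but is essential to identify the two sides as $F$-expansions over the same index set.

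The second step is to translate each of the two surviving labeled-Dyck-path sums into generating functions over a combinatorially natural object — I expect either words $w \in [n]^n$ with a prescribed number of strict descents or equivalently ordered set partitions of $[n]$ into $k+1$ blocks — carrying two a priori different statistics. In keeping with the title and abstract, one of these statistics should be inversion-like and the other major-index-like (a ``minimaj'' variant adapted to the valley data). The content $x^P$ on Dyck paths transforms into a quasisymmetric monomial indexed by some composition of $n$ depending on the ordered set partition.

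The third and main step is to prove equidistribution of the two statistics on this family, refined by the quasisymmetric content. The natural route is to build an explicit content-preserving bijection on ordered set partitions (or equivalent words) sending one statistic to the other. This will specialize to MacMahon/Foata when $k = n-1$ (when the ordered set partitions are just permutations), so the target is a genuine generalization of Foata's second fundamental transformation. The hard part will be engineering this bijection so that it respects both the block-structure of the ordered set partition and the subtle normalization coming from the valley term $d_i(P)+1$; a naive application of Foata to the underlying word will not suffice, since one must track how insertion/deletion steps interact with block boundaries. I expect the argument to proceed by induction on $n$, introducing one letter at a time and defining the insertion rule so that the change in inversion count matches the change in the major-type statistic, with careful case analysis at the block boundaries being the principal technical burden.
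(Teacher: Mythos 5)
Your reduction is aimed at the right target: after the bookkeeping in your first two steps (which the paper simply imports from Haglund--Remmel--Wilson as the statement that $\Val_{n,k}(x;q,0)$ and $\Val_{n,k}(x;0,q)$ are the generating functions for $\inv$ and $\minimaj$ over ordered multiset partitions with $k+1$ blocks), the theorem is exactly the equidistribution $\sum_{\mu \in \OP_{\beta,k+1}} q^{\inv(\mu)} = \sum_{\mu \in \OP_{\beta,k+1}} q^{\minimaj(\mu)}$ for every weight $\beta$. But that equidistribution is the entire content of the theorem, and your proposal leaves it as a plan (``engineer a Foata-like, content-preserving bijection by induction on $n$, inserting one letter at a time, with case analysis at block boundaries''). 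No construction is given, so there is a genuine gap: the hard step is asserted, not proved.

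Moreover, the specific route you sketch runs into a concrete obstruction you should be aware of. Any letter-by-letter insertion scheme that tracks the block structure in the natural way will tend to refine the statement by the shape $\alpha = (|B_1|,\dots,|B_{k+1}|)$, and at that level the claim is \emph{false}: for $\beta=(2,2,1)$ and $\alpha=(2,1,2)$ one has $\sum q^{\inv} = q+2q^2+q^3+q^4$ but $\sum q^{\minimaj} = q+q^2+2q^3+q^4$. So any correct bijection must scramble shapes while fixing only the weight and the number of blocks, and a ``naive Foata on the underlying word'' or shape-respecting insertion cannot succeed. The paper's proof is correspondingly not bijective: it derives recursions for $\inv$ and $\minimaj$ on fixed-shape classes (which genuinely differ --- the $\minimaj$ recursion cyclically rotates the weight composition), proves via a family of descent-preserving ``switch map'' involutions that $M_{\beta,k}(q)$ is invariant under rearranging $\beta$ (equivalently, that $\Val_{n,k}(x;0,q)$ is symmetric in $x$), and only then, after summing over all shapes with $k$ blocks and invoking this invariance, matches the two recursions. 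To complete your approach you would either need to supply an actual bijection with the shape-mixing feature above (the paper notes such a bijective proof was only in preparation, by Remmel and Wilson, even in the permutation case), or replace the bijection by an argument of the recursion-plus-symmetry type.
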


Theorem~\ref{valley-function-equality} proves a conjecture of A. T. Wilson
\cite[Conjecture 1]{WMultiset} as well as 
a related conjecture of Haglund, Remmel, and Wilson \cite[Conjecture 4.1]{HRW}.

\subsection{Permutation and Word Statistics}
Our proof of Theorem~\ref{valley-function-equality} will rely on equidistribution results for statistics
on ordered set partitions and ordered multiset partitions which generalize statistics on permutations and words.

Let $w = w_1 \dots w_m$ be any word in the alphabet $\{1, 2, \dots \}$.
The {\em inversion number} of $w$ is
\begin{equation}
\inv(\pi) := \# \{ (i < j) \,:\, w_i > w_j \}.
\end{equation}
A {\em descent} of $w$ is an index $1 \leq i \leq m-1$ such that $w_i > w_{i+1}$.  The 
{\em descent set} of $w$ is
\begin{equation}
\Des(w) := \{ 1 \leq i \leq m-1 \,:\, \text{$i$ is a descent of $w$} \}.
\end{equation}
Finally, the {\em major index} of $w$ is
\begin{equation}
\maj(w) := \sum_{i \in \Des(w)} i.
\end{equation}

The study of statistics on words (and in particular  permutations in the symmetric group $S_n$ on 
$[n] := \{1, 2, \dots, n\}$)
is a major facet of enumerative combinatorics.  One of the foundational results on permutations statistics is 
due to MacMahon \cite{M}; in 1915 he proved that 
\begin{equation}
\label{eq:macmahon}
\sum_{\pi \in S_n} q^{\inv(\pi)} = \sum_{\pi \in S_n} q^{\maj(\pi)} = [n]!_q,
\end{equation}
where $[n]!_q := [n]_q [n-1]_q \cdots [1]_q$ and $[r]_q := 1 + q + \cdots + q^{r-1}$.
In his honor, any permutation statistic which shares this distribution is called {\em Mahonian}.

In \cite{HRW} it was proven that $\Rise_{n,k}(x; q,t)$ and $\Val_{n,k}(x; q,t)$ at $q = 0$ or 
$t = 0$ can be written using generalizations of permutation and word statistics to 
statistics on ordered set partitions and ordered multiset partitions.  The result for $\Val_{n,k}(x; q, t)$ is as follows.

\begin{theorem}
\label{valley-interpretation} (\cite{HRW})
For all $n > k \geq 0$, we have the equalities
\begin{align}
\Val_{n,k}(x; q, 0) &= \sum_{\beta \models_0 n} \sum_{\mu \in \OP_{\beta, k+1}} q^{\inv(\mu)} x^{\beta}, \\
\Val_{n,k}(x; 0, q) &= \sum_{\beta \models_0 n} \sum_{\mu \in \OP_{\beta, k+1}} q^{\minimaj(\mu)} x^{\beta}.
\end{align}
\end{theorem}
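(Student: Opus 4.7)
The plan is to prove the two equalities separately by starting with the combinatorial definition of $\Val_{n,k}(x; q,t)$ from a labeled-Dyck-path sum, extracting the coefficient of $z^{n-k-1}$, and then analyzing which terms survive each specialization. The first step, common to both parts, is to expand the product
\[
\prod_{i \in \Val(P)} \left(1 + z/t^{d_i(P)+1}\right) = \sum_{S \subseteq \Val(P)} z^{|S|} \prod_{i \in S} t^{-(d_i(P)+1)},
\]
so that $\Val_{n,k}(x;q,t)$ becomes a sum over pairs $(P, S)$ with $P \in \mathcal{LD}_n$ and $S$ a size-$(n-k-1)$ subset of valleys of $P$, weighted by $q^{\mathrm{dinv}(P)} t^{\mathrm{area}(P) - \sum_{i \in S}(d_i(P)+1)} x^P$.

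For $\Val_{n,k}(x; q, 0)$, specialization at $t=0$ picks out exactly those pairs $(P,S)$ for which $\mathrm{area}(P) = \sum_{i \in S}(d_i(P)+1)$. I would show this equality forces a rigid structure on $P$: every cell contributing to area must be directly attributable to a valley in $S$. From such a pair, the plan is to construct an ordered multiset partition $\mu = (B_1, \dots, B_{k+1}) \in \OP_{\beta, k+1}$ by reading off the labels of $P$ in an order that places the $k+1$ "non-selected" valleys or diagonal hits as block starts, and grouping the remaining labels of $P$ into blocks according to the $S$-determined structure. Inverting this yields a bijection, and one then checks that $\mathrm{dinv}(P)$ equals the inversion count $\inv(\mu)$ under the standard definition of $\inv$ on ordered multiset partitions (comparing smaller-block-element to larger later-block-element, as in the usual dinv-to-inv translation in the Shuffle-Theorem literature).

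For $\Val_{n,k}(x; 0, q)$, setting $q = 0$ instead selects pairs with $\mathrm{dinv}(P) = 0$. These labeled Dyck paths are much more constrained: the labels in each column must be strictly decreasing from bottom to top, and between adjacent columns only certain orderings are allowed. On such paths the weight simplifies to $t^{\mathrm{area}(P)-\sum_{i\in S}(d_i+1)} x^P$ (now with $t$ renamed to $q$). Here I would produce a second bijection from these $\mathrm{dinv}=0$ pairs $(P,S)$ to $\OP_{\beta, k+1}$, converting each maximal "column-strip" of labels in $P$ into a block of $\mu$. The output area contribution from each row then matches exactly the "block-rank times step-up" contributions in the definition of $\minimaj$: $\minimaj(\mu)$ is computed by sorting each block in a prescribed way and summing the positions of descents in the concatenation, and the correspondence is chosen so that row-by-row area increments along $P$ accumulate to these descent positions.

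The main obstacle will be the second bijection and the area-to-minimaj verification. The $\minimaj$ statistic is defined through a reordering of each block (minimum element first, remaining elements in a specified rotation) followed by the computation of $\maj$ on the resulting word, and this double-layered definition rarely lines up with a path statistic in a transparent way. The strategy is to match it with the tiered area contributions from rows below each chosen valley $i \in S$, using induction on $k$: the base $k = n-1$ recovers a known instance, and the inductive step is driven by tracking how adding one more allowed valley to $S$ increments both sides by the same amount.
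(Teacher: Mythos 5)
First, a point of comparison that matters here: this paper does not prove Theorem~\ref{valley-interpretation} at all. It is quoted from \cite{HRW} (the paper never even defines $\mathrm{dinv}$, $\mathrm{area}$, $d_i(P)$, or $\mathcal{LD}_n$, explicitly deferring to \cite{HRW}), so there is no internal argument to measure your attempt against; the actual proofs live in \cite{HRW} and in Wilson's paper \cite{WMultiset}. Your opening step --- expanding the valley product, extracting the coefficient of $z^{n-k-1}$, and reducing to pairs $(P,S)$ with $S$ a set of $n-k-1$ valleys --- is indeed how that analysis begins, so the overall shape of your plan is reasonable.

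The gaps are in everything after that step. (a) Your survival analysis hinges on the valley factor being $1+z/t^{d_i(P)+1}$ as transcribed in this paper; in \cite{HRW} the valley product is $\prod_{i \in \mathrm{Val}(P)}\bigl(1+z/q^{d_i(P)+1}\bigr)$, with $d_i(P)$ a dinv-type statistic, under which the $t=0$ specialization isolates the \emph{area-zero} labeled paths (whose vertical runs become the blocks of an ordered multiset partition and whose corrected dinv becomes $\inv$), while the $q=0$ specialization isolates pairs with $\mathrm{dinv}(P)=\sum_{i\in S}(d_i(P)+1)$, and it is the full area that becomes $\minimaj$. Your mirrored description of which terms survive would have to be re-founded on the actual definition of $d_i(P)$, which you never state, and your claim that $\mathrm{area}(P)=\sum_{i\in S}(d_i(P)+1)$ ``forces a rigid structure'' is asserted, not argued. (b) Even granting your reading, both bijections are described only as intentions (``reading off the labels \dots grouping \dots according to the $S$-determined structure,'' ``converting each maximal column-strip into a block''), and the two statistic identifications --- $\mathrm{dinv}$ versus $\inv$, and the surviving $t$-exponent versus $\minimaj$ --- are exactly the content of the theorem; you acknowledge the second is the main obstacle and offer only an induction-on-$k$ plan without saying how the inductive step acts on paths or on multiset partitions, or why both sides increment equally. (c) One concrete error: in a labeled Dyck path the labels in a column must strictly \emph{increase} from bottom to top by definition, so your characterization of the surviving paths as having column labels strictly decreasing upward cannot be right, which undercuts the proposed column-strip-to-block map. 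As written, then, this is a program rather than a proof; to complete it you would need to work from the precise definitions of $d_i(P)$ and $\mathrm{dinv}$ in \cite{HRW} and construct the two weight-preserving bijections explicitly (or do as this paper does and cite \cite{HRW} for the statement).
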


The outer sums in Theorem~\ref{valley-interpretation} are over all weak compositions
$\beta = (\beta_1, \beta_2, \dots )$ of $n$.
The inner sums in Theorem~\ref{valley-interpretation} are over all $(k+1)$-block ordered multiset partitions 
$\mu$ of 
the multiset $\{1^{\beta_1}, 2^{\beta_2}, \dots \}$.  The entries $\inv(\mu)$
and $\minimaj(\mu)$ appearing in the exponents 
are generalizations to ordered multiset partitions of the inversion and major index statistic on words.
Theorem~\ref{valley-function-equality} is therefore equivalent to proving that 
$ \sum_{\mu \in \OP_{\beta, k}} q^{\inv(\mu)} =
 \sum_{\mu \in \OP_{\beta, k}} q^{\minimaj(\mu)}$ for 
 any $\beta \models_0 n$.  This will be demonstrated in Section~\ref{Words}.

The inversion statistic $\inv(\mu)$ on ordered multiset partitions was studied by Steingr\'imsson 
\cite{Stein} (under the name $\mathrm{ROS}$)  and has
received a fair amount of attention \cite{RWSet, WMultiset}.  
The minimaj statistic $\minimaj(\mu)$ is newer; it first appeared in \cite{HRW}. 
Given an ordered multiset partition $\mu$, one considers
the set of words $W(\mu)$ obtained by permuting letters within the blocks of $\mu$.  For example,
if $\mu = ( 13 | 1 | 23 )$, then 
$$W(\mu) = \{ 13123,  31123, 13132, 31132 \}.$$  
The number $\minimaj(\mu)$ is defined to be the minimum of 
the major index statistic over the set $W(\mu)$.  
In this example, we have 
$$\maj(13123) = 2, \text{ } \maj(31123) = 1, \text{ } \maj(13132) = 6, \text{ and } \maj(31132) = 5,$$
 so that
$\minimaj(13 | 1 | 23) = 1$.

%We will give evidence that $\minimaj$ is a good generalization of major index to from 
%permutations and words to ordered  partitions.
%In particular,
%we will show that MacMahon's equidistribution result generalizes more extensively when one
%considers the statistic $\minimaj$ as compared with previously appearing $\maj$-like statistics on ordered
%partitions.

\subsection{Organization}
The rest of the paper is organized as follows.

In {\bf Section~\ref{Permutations}} we will study the $\inv$ and $\minimaj$ statistics on ordered set partitions.
We will prove that they have the same distribution on ordered set partitions with a fixed shape,
and calculate this distribution $F_{n,\alpha}(q)$ explicitly.  The polynomials 
$F_{n,\alpha}(q)$ are nonstandard $q$-analogs of the multinomial coefficients
${n \choose \alpha}$.

In {\bf Section~\ref{Words}} we generalize from permutations and ordered set partitions to words and ordered
multiset partitions.  We will see that the theory of $\inv$ and $\minimaj$ behaves differently in the
presence of repeated letters.  In particular, these statistics are {\em not} equidistributed on 
ordered multiset partitions of the same weight and shape.  On the other hand, these statistics
{\em are} equidistributed on ordered multiset partitions of the same weight and with a fixed
number of blocks.

In {\bf Section~\ref{Colored}} we generalize in another direction to colored permutations and colored
ordered set partitions.  As in the uncolored case, we will see that the statistics 
$\inv$ and $\minimaj$ share the same distribution on these sets.  In fact, we will prove that this
distribution is a natural generalization $([r]_q)^n F_{n,\alpha}(q^r)$ 
of the polynomial $F_{n,\alpha}(q)$.  However, we will see that attempting to mimic the arguments
of Section~\ref{Permutations} in the presence of colors
 results in a seemingly difficult polynomial
identity (Proposition~\ref{polynomial-identity}).

We close in {\bf Section~\ref{Closing}} with some algebraic questions regarding the polynomial
$F_{n,\alpha}(q)$.

\section{Permutations and ordered set partitions}
\label{Permutations}

\subsection{Ordered Set Partitions}
A natural generalization of  permutations is given by ordered set partitions.  An {\em ordered set partition}
$\sigma$ of $[n]$ is a sequence $\sigma = (B_1, \dots, B_k)$ of nonempty subsets of $[n]$
(called {\em blocks}) such that $[n] = B_1 \uplus \dots \uplus B_k$.  
The sequence of cardinalities 
$(|B_1|, \dots, |B_k|)$ is called the {\em shape} of $\sigma$, so
an ordered set partition of shape $1^n$ is just a permutation in $S_n$.
For example, we have that 
\begin{equation*}
\sigma = ( \{2, 5\}, \{1\}, \{3, 4\} ) 
\end{equation*}
is an ordered set partition of $[5]$ with $3$ blocks
and shape $(2,1,2)$.  
We will adopt the shorthand of writing ordered set partitions 
$\sigma = (B_1 | \ldots | B_k)$
by using vertical bars to demarcate
blocks and writing entries within blocks in increasing order.  
Our example ordered set partition is therefore
\begin{equation*}
\sigma = (25 | 1 | 34).
\end{equation*}

Recall that a {\em weak composition of $n$} is a finite sequence 
$\alpha = (\alpha_1, \dots, \alpha_k)$ of nonnegative integers satisfying $\alpha_1 + \cdots + \alpha_k = n$.
We say that $\ell(\alpha) = k$.
A {\em composition of $n$} is a weak composition of $n$ without any zeros.  We use the notation
$\alpha \models_0 n$ to mean that $\alpha$ is a weak composition of $n$ and
$\alpha \models n$ to mean that $\alpha$ is a composition of $n$.  

For any $n \geq k$ and $\alpha \models n$, we introduce the following families of ordered set partitions:
\begin{align}
\OP_n &:= \{ \text{all ordered set partitions of $[n]$} \}, \\
\OP_{n,k} &:= \{ \text{all ordered set partitions of $[n]$ with $k$ blocks} \}, \\
\OP_{n, \alpha} &:= \{ \text{all ordered set partitions of $[n]$ of shape $\alpha$} \}.
\end{align}

Let $\sigma = (B_1 | \dots | B_k)$ be an ordered set partition.  An {\em inversion} of $\sigma$ is a pair 
$1 \leq i < j \leq n$ such that $j \in B_{\ell}$ and $i = \min(B_m)$ for some $\ell < m$.   The {\em inversion count} 
$\inv(\sigma)$ is the number of inversions
of $\sigma$.  
For example, the inversions
of $(25|1|34)$ are $(1,2), (1,5),$ and $(2,3)$ and we have
$\inv(25|1|34) = 3$.
The $\inv$ statistic on ordered set partitions was introduced by Steingr\'imsson \cite{Stein}.

For $\sigma \in \OP_n$,
 let $S(\sigma)$  be the set of permutations in $S_n$ obtained by rearranging letters within the blocks 
 of $\sigma$.  For example, we have
 \begin{equation*}
 S(25|1|34) = \{25134, 52134, 25143, 52143\}. 
 \end{equation*}
 As $\sigma$ varies over $\OP_n$, the sets
 $S(\sigma)$ are precisely the parabolic
 cosets of the symmetric group $S_n$.  As in Section~\ref{Introduction}, we set
 \begin{equation}
 \minimaj(\sigma) := \min \{ \maj(\pi) \,:\, \pi \in S(\sigma) \}.
 \end{equation}
 For example, we have 
 \begin{align*}
 \minimaj(25|1|34) &= \min \{ \maj(25134), \maj(52134), \maj(25143), \maj(52143) \} \\
 &= \min \{2, 3, 6, 7\} = 2.
 \end{align*}
 
For any $n \geq k$ and $\alpha \models n$,
 we introduce the  generating functions for $\inv$ and $\minimaj$.
 \begin{equation}
 \begin{cases}
 I_n(q) := \sum_{\sigma \in \OP_n} q^{\inv(\sigma)}, \\
  I_{n,k}(q) := \sum_{\sigma \in \OP_{n,k}} q^{\inv(\sigma)}, \\
    I_{n,\alpha}(q) := \sum_{\sigma \in \OP_{n,\alpha}} q^{\inv(\sigma)},
 \end{cases}
  \begin{cases}
 M_n(q) := \sum_{\sigma \in \OP_n} q^{\minimaj(\sigma)}, \\
  M_{n,k}(q) := \sum_{\sigma \in \OP_{n,k}} q^{\minimaj(\sigma)}, \\
    M_{n,\alpha}(q) := \sum_{\sigma \in \OP_{n,\alpha}} q^{\minimaj(\sigma)}.
 \end{cases}
 \end{equation}

\subsection{Segmented Permutations}
It will be convenient to consider permutations and words which carry a segmentation.
Formally, a {\em $k$-segmented permutation} of size $n$ is a pair $(\pi, \alpha)$ where $\pi \in S_n$ and 
$\alpha \models n$ has $\ell(\alpha) = k$.  
We write $(\pi, \alpha)$ by inserting dots between just after the positions
$\alpha_1, \alpha_1 + \alpha_2, \dots$ of $\pi$, resulting in a figure
$\pi = \pi[1] \cdot \pi[2] \cdot \ldots \cdot \pi[k]$.  The words $\pi[1], \pi[2], \dots, \pi[k]$ are called the {\em segments}
of $(\pi, \alpha)$.

For example, we have that 
\begin{equation*}
(\pi, \alpha) = (627318945, (3,1,3,2))
\end{equation*}
is a $4$-segmented permutation of size $9$.  We represent
this pair as 
\begin{equation*}
\pi[1] \cdot \pi[2] \cdot \pi[3] \cdot \pi[4] = 627 \cdot 3 \cdot 189 \cdot 45 
\end{equation*}
and we have the segments
\begin{equation*}
\pi[1] = 627, \pi[2] = 3, \pi[3] = 189, \pi[4] = 45.
\end{equation*}

For any permutation statistic $\mathrm{stat}$, we have a natural extension of
$\mathrm{stat}$ to segmented permutations given by forgetting the segmentation; that is
\begin{equation*}
\mathrm{stat}(\pi, \alpha) := \mathrm{stat}(\pi).  
\end{equation*}
The role played by segmentation will be  minor in this
section, but segmentation will become very important in Section~\ref{Words} when we allow
repetition of letters.

Segmented permutations can be used to give an alternative characterization of $\minimaj$.  In particular, we 
define a map 
\begin{equation}
\pi: \OP_{n, k} \longrightarrow \{ \text{$k$-segmented permutations of size $n$} \}
\end{equation}
 as follows.  
 
 \begin{defn}
 \label{segment-pi-map}
 Let $\sigma = (B_1 | \dots | B_k) \in \OP_{n,k}$.  We define 
 $\pi(\sigma) = \pi[1] \cdot \pi[2] \cdot \ldots \cdot \pi[k]$ by the following recursive procedure.
 For $1 \leq i \leq k$, write $B_i = \{j^{(i)}_1 < j^{(i)}_2 < \cdots < j^{(i)}_{\alpha_i} \}.$

 \begin{itemize}
 \item  Let the $k^{th}$ segment $\pi[k]$ be the letters of $B_k$ written in increasing order:
\begin{equation*}
 \pi[k] := j_1^{(k)} j_2^{(k)} \cdots j_{\alpha_k}^{(k)}.
 \end{equation*}
 \item  For $1 \leq i \leq k-1$, assume that the segment $\pi[i+1]$ has been defined.  Let $r$ be the first 
 letter in $\pi[i+1]$ and let $0 \leq m \leq a_i$ be the maximum index such that $j_m^{(i)} \leq r$ (where we 
 set $j_0^{(i)} := - \infty$).  Define the $i^{th}$ segment $\pi[i]$ to be the sequence
 \begin{equation*}
 \pi[i] := j_{m+1}^{(i)} j_{m+2}^{(i)} \cdots j_{\alpha_i}^{(i)} j_1^{(i)} j_2^{(i)} \cdots j_m^{(i)}.
 \end{equation*}
 \end{itemize}
 \end{defn}

 For example, we have  
 \begin{equation*}
 \pi(257|6|148|39) = 725 \cdot 6 \cdot 481 \cdot 39.
 \end{equation*}
 Observe that if $\pi(\sigma) = (\pi, \alpha)$, we have that $\pi \in S(\sigma)$.
 
 The following alternative formulation of $\minimaj$ is easy to see; its proof is left to the reader.
 
 \begin{lemma}
 \label{minimaj-alternative}
 Let $\sigma \in \OP_n$ be an ordered set partition of $[n]$ and let $\pi(\sigma) = (\pi, \alpha)$.
 The major index statistic $\maj$ achieves a unique minimum value on $S(\sigma)$, and this value is achieved 
 at $\pi$.  In particular,
 we have 
 \begin{equation*}
 \minimaj(\sigma) = \maj(\pi(\sigma)).
 \end{equation*}
 \end{lemma}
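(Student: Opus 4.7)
The plan is to induct on the number of blocks $k$ of $\sigma$, working throughout with a slightly strengthened inductive statement that carries an auxiliary ``virtual continuation letter'' $r$. Precisely, for any integer $r$, thought of as the first letter of a hypothetical $(k{+}1)^{\text{st}}$ segment, I would prove that the quantity
\[
M_r(\pi) := \maj(\pi) + n \cdot [\,\text{last letter of $\pi$} > r\,]
\]
is uniquely minimized over $\pi \in S(\sigma)$ at the permutation $\tilde{\pi}(\sigma, r)$ constructed exactly as in Definition~\ref{segment-pi-map}, but with $r$ in place of ``the first letter of $\pi[k+1]$'' when processing the rightmost segment. Taking $r$ large enough that every element of $[n]$ is at most $r$ kills the penalty term and recovers Lemma~\ref{minimaj-alternative}.

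In the base case $k = 1$ we have $S(\sigma) = S_n$, and $\tilde{\pi}(\sigma, r)$ writes $B_1$ as the entries strictly greater than $r$ in increasing order followed by the entries at most $r$ in increasing order. Setting $m := |\{b \in B_1 : b \leq r\}|$, the permutation $\tilde{\pi}(\sigma, r)$ has a unique internal descent at position $n-m$ when $0 < m < n$ and no descents at all when $m \in \{0, n\}$, giving $M_r(\tilde{\pi}(\sigma, r)) = 0$ if $m = n$, equal to $n$ if $m = 0$, and $n-m$ otherwise. A direct case analysis---any other arrangement either pushes an existing descent to a strictly later position, inserts an additional descent, or lets the last letter exceed $r$---shows that $M_r$ strictly exceeds this value at every other permutation of $B_1$.

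For the inductive step with $k \geq 2$, I would decompose
\[
M_r(\pi) = \big[\maj(\pi[1]\cdots\pi[k-1]) + p_{k-1}\cdot [\ell' > f]\big] + \big[(\text{internal descents of $\pi[k]$}) + n\cdot [\ell > r]\big],
\]
where $f$ and $\ell$ denote the first and last letters of $\pi[k]$ and $\ell'$ is the last letter of $\pi[k-1]$, and $p_{k-1} = \alpha_1 + \cdots + \alpha_{k-1}$. For each fixed $\pi[k]$, the left bracket is an instance of the strengthened statement on $\sigma' = (B_1 | \cdots | B_{k-1})$ with virtual letter $f$, and is therefore uniquely minimized at $\tilde{\pi}(\sigma', f)$ by the inductive hypothesis. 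The right bracket is the $k = 1$ problem on $B_k$ with virtual letter $r$, so by the base case its minimizer over all arrangements of $B_k$ is the construction of Definition~\ref{segment-pi-map} applied to block $k$. This choice pins down $f$, and the two local minimizers agree on their first letters and so glue together to give $\tilde{\pi}(\sigma, r)$.

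The main obstacle is coupling these two local minimizations: changing $\pi[k]$ changes the virtual letter $f$ that the inductive hypothesis consumes, so in principle one might gain in the prefix minimum by accepting a suboptimal $\pi[k]$. The key quantitative estimate that rules this out is that any deviation of $\pi[k]$ from its base-case optimum costs at least $p_{k-1}+1$ in the right bracket (the smallest possible position of an extra internal descent inside block $k$), whereas the only way a different choice of $f$ can reduce the prefix minimum is by at most $p_{k-1}$ (the weight of the boundary descent $(k{-}1)$--$(k)$ alone). Tracking the base-case analysis carefully through the three cases $m_k = 0$, $0 < m_k < \alpha_k$, and $m_k = \alpha_k$ to verify this monotone-gap inequality is the technical heart of the argument.
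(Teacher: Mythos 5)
The paper gives no proof of this lemma (it is explicitly left to the reader), so the only question is whether your argument stands on its own; its overall shape --- a strengthened induction carrying a virtual continuation letter $r$ and an end-of-word penalty, processing segments from right to left --- is a reasonable formalization of why Definition~\ref{segment-pi-map} is forced. However, the quantitative estimate you yourself identify as the technical heart is false as stated. Take the strengthened statement for $\sigma = (23\,|\,14)$, $n=4$, virtual letter $r=1$, so $p_{k-1}=2$ and the last block is $B_2=\{1,4\}$ with $m_k=1$. The base-case optimum for $B_2$ is the segment $41$, whose right bracket is $3$ (one internal descent at absolute position $3$, no penalty). The deviating arrangement $14$ has no internal descent at all; it merely incurs the last-letter penalty $n=4$, so its right bracket is $4$. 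The deviation costs $1$, not at least $p_{k-1}+1=3$: a suboptimal $\pi[k]$ need not insert an extra internal descent, because in the regime $0<m_k<\alpha_k$ it can trade the single internal descent of the optimum for the end penalty, at a price of only $m_k$, which can be as small as $1$. (This regime never occurs at the top level $r\geq n$, but your induction needs the strengthened statement for all $r$, so the inductive step must cover it.)

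The argument is repairable, but the repair is a dichotomy rather than a uniform gap. The prefix minimum $\min\bigl(\maj(\pi[1]\cdots\pi[k-1]) + p_{k-1}[\ell' > f]\bigr)$ is weakly decreasing in $f$, since pointwise the functionals for two virtual letters differ by at most $p_{k-1}$ and in a monotone way. Hence a deviation of $\pi[k]$ can only lower the prefix cost if it makes the first letter $f$ strictly larger than the first letter $f^*$ of the base-case optimum; for such deviations one can show (by examining whether the segment has one or at least two internal descents, and where the last ``large'' element sits) that the right bracket exceeds the optimum by at least $p_{k-1}+1$, beating the maximal prefix gain $p_{k-1}$. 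Deviations that keep or lower $f$ --- such as the counterexample above --- cannot decrease the prefix cost at all, and any of them strictly increases the right bracket, which also supplies the uniqueness claim. One further wrinkle to handle when you write this out: the right bracket is not literally ``the $k=1$ problem on $B_k$,'' since its internal descents are weighted by absolute positions (shifted by $p_{k-1}$) and its penalty is $n$ rather than $\alpha_k$; the same arrangement of $B_k$ minimizes it uniquely, but that requires rerunning the base-case analysis with these weights rather than citing the base case verbatim.
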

 
 For example, we may calculate 
 \begin{equation*}
 \minimaj(257|6|148|39) = \maj(725 \cdot 6 \cdot 481 \cdot 39) = 1 + 4 + 6 = 11.
 \end{equation*}
 
 Given any sequence $(C_1 | \dots | C_k)$ of nonempty, finite, and pairwise disjoint sets of integers, define
 the {\em standardization}
 $\overline{(C_1 | \dots | C_k)}$ to be the unique ordered set partition of 
 $[|C_1| + \cdots + |C_k|]$ which is order isomorphic to $(C_1 | \dots | C_k)$.  
 For example, we have
 \begin{equation*}
 \overline{( 2 9 | 5 | 3 7)} = (1 5 | 3 | 2 4).  
 \end{equation*}
 The $\minimaj$ statistic satisfies the following
 `compression' property. 
 
 \begin{lemma}
 \label{compress}
 Let $\sigma = (B_1 | \dots | B_{k-1} | B_k)$ be an ordered set partition.  We have
 \begin{equation}
 \minimaj(\sigma) = \minimaj( \overline{B_1 | \dots | B_{k-1} | \min(B_k)} ).
 \end{equation}
 \end{lemma}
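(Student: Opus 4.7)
The plan is to invoke Lemma~\ref{minimaj-alternative} on both sides, reducing the desired identity to $\maj(\pi(\sigma)) = \maj(\pi(\sigma'))$, where $\sigma' := \overline{B_1 | \dots | B_{k-1} | \min(B_k)}$. I would then exploit the fact that the recursive construction in Definition~\ref{segment-pi-map} is driven entirely by order comparisons, and that only a single piece of information about $B_k$ ever enters: since the last segment $\pi[k]$ is forced to list $B_k$ in increasing order, the value $r = j_1^{(k)} = \min(B_k)$ is what is passed into the construction of $\pi[k-1]$, after which the recursion never again consults $B_k$.

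Consequently, if we form the auxiliary (non-standardized) object $\tilde\sigma := (B_1 | \dots | B_{k-1} | \{\min(B_k)\})$ and apply the same recipe to produce a word $\pi(\tilde\sigma)$, its first $k-1$ segments coincide exactly with those of $\pi(\sigma)$, while its last segment is the single letter $\min(B_k)$. Equivalently, $\pi(\sigma)$ is obtained from $\pi(\tilde\sigma)$ by appending the increasing tail $j_2^{(k)} < j_3^{(k)} < \dots < j_{\alpha_k}^{(k)}$, every entry of which exceeds $\min(B_k)$. Appending an increasing sequence whose first entry exceeds the previous last entry introduces no new descent positions, so $\maj(\pi(\sigma)) = \maj(\pi(\tilde\sigma))$.

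To finish, I would observe that standardization is nothing but the letterwise application of the order-preserving bijection $f$ from the underlying set of $\tilde\sigma$ onto $[n - |B_k| + 1]$. Because the recursion in Definition~\ref{segment-pi-map} uses only order comparisons, $\pi$ commutes with $f$ in the sense that $\pi(\sigma')$ is obtained from $\pi(\tilde\sigma)$ by applying $f$ letterwise. Since order-preserving relabelings preserve the descent set, they preserve $\maj$; hence $\maj(\pi(\sigma')) = \maj(\pi(\tilde\sigma)) = \maj(\pi(\sigma))$, and Lemma~\ref{minimaj-alternative} delivers the result.

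The only subtle step is conceptual: one must cleanly justify that $\pi$ commutes with standardization and that $\min(B_k)$ is the sole feature of $B_k$ used in forming the first $k-1$ segments. Once these two order-comparison observations are made precise, the descent bookkeeping is immediate and no further computation is required.
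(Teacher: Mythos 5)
Your proof is correct and follows essentially the same route as the paper: both reduce to Lemma~\ref{minimaj-alternative} and observe that the word attached to the compressed partition agrees (up to order-preserving relabeling) with $\pi(\sigma)$ with its increasing terminal segment truncated, so the major index is unchanged. Your write-up merely spells out the two order-comparison facts (only $\min(B_k)$ enters the recursion, and $\pi$ commutes with standardization) that the paper's one-line proof leaves implicit.
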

 
 \begin{proof}
 We have that $\pi(\overline{B_1 | \dots | B_{k-1} | \min(B_k)})$ coincides with 
 $\pi(\sigma) = \pi[1] \cdot \pi[2] \cdot \dots \cdot \pi[k]$, except that 
 its terminal segment $\pi[k]$ is shortened to its first letter.  
Moreover, the segment $\pi[k]$ does not contain any descents.  Now apply 
Lemma~\ref{minimaj-alternative}.
 \end{proof}
 
For example, we have
 \begin{equation*}
 \minimaj(13|58|2467) = \minimaj(\overline{13|58|2}) = \minimaj(13|45|2).
 \end{equation*}

\subsection{$M_{n, \alpha}(q) = I_{n, \alpha}(q) = F_{n, \alpha}(q)$.}
For a composition $(\alpha_1, \dots, \alpha_k) \models n$ with
$\ell(\alpha) = k$, let $F_{n,\alpha}(q)$ be the following polynomial:
\begin{equation}
F_{n,\alpha}(q) 
 := \prod_{i = 1}^k {\alpha_i - 1 \choose \alpha_i - 1} + {\alpha_i \choose \alpha_i - 1} q
 + {\alpha_i + 1 \choose \alpha_i - 1} q^2 + \cdots + 
 {\alpha_1 + \cdots + \alpha_i - 1 \choose \alpha_i - 1} q^{\alpha_1 + \cdots + \alpha_{i -1} }. 
 %= \prod_{i = 1}^k \frac{\partial_q^{\alpha_i - 1}}{(\alpha_i - 1)!} [\alpha_1 + \cdots + \alpha_i]_q.
\end{equation}
For example, if $\alpha = (2,3,2) \models 7$ we have
\begin{align*}
F_{7,(2,3,2)}(q) &= \left[ {2 \choose 2} + {3 \choose 2}q + {4 \choose 2}q^2\right] \times
\left[ {1 \choose 1} + {2 \choose 1} q + \cdots + {6 \choose 1} q^5 \right].
\end{align*}

At the extremes, we have that 
$F_{n,(n)}(q) = 1$ and
$F_{n,(1^n)}(q) = [n]!_q$.  We have that $F_{n,\alpha}(1)$ is the multinomial coefficient
${n \choose \alpha} = {n \choose \alpha_1, \dots, \alpha_k}$, but for general $\alpha$ the polynomial
$F_{n,\alpha}(q)$ is {\em not} the standard $q$-multinomial
${n \brack \alpha_1, \dots, \alpha_k}_q = \frac{[n]!_q}{[\alpha_1]!_q \cdots [\alpha_k]!_q}$.

In this subsection we will prove that $F_{n, \alpha}(q)$ coincides with 
both $M_{n, \alpha}(q)$ and $I_{n, \alpha}(q)$.  The strategy is to show that these three polynomials
obey the same recursion. 

Let $\ZZ_n = \langle c \rangle$ act on permutations in $S_n$ by decrementing every letter by
$1$ modulo $n$.  
For example, if $n = 8$ we have  
\begin{equation*}
c.(35278416) = 24167385.
\end{equation*}
The following simple lemma will be crucial.

\begin{lemma}
\label{cycle-major-permutations}
Let $\pi = \pi_1 \pi_2 \dots \pi_n \in S_n$ and assume $\pi_n \neq 1$.  We have 
\begin{equation}
\maj(c.\pi) = \maj(\pi) + 1.
\end{equation}
\end{lemma}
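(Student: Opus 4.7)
The plan is to track the descent set exactly, by localizing the effect of $c$ to the two positions adjacent to the letter $1$ in $\pi$. Let $j$ be the unique position with $\pi_j = 1$; by hypothesis $j \leq n-1$. Writing $\sigma = c.\pi$, note that $\sigma_i = \pi_i - 1$ whenever $\pi_i \neq 1$, and $\sigma_j = n$.

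First, I would handle the positions away from $j$. For any $i \notin \{j-1, j\}$, neither $\pi_i$ nor $\pi_{i+1}$ equals $1$, so both entries get decremented by $1$ and the inequality $\sigma_i > \sigma_{i+1}$ holds if and only if $\pi_i > \pi_{i+1}$. Thus $i \in \Des(\sigma)$ iff $i \in \Des(\pi)$ for all such $i$.

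Next I would examine the two exceptional positions. If $j \geq 2$, then $\pi_{j-1} > 1 = \pi_j$, so $j-1 \in \Des(\pi)$; but $\sigma_{j-1} = \pi_{j-1} - 1 < n = \sigma_j$, so $j-1 \notin \Des(\sigma)$. At position $j$, we have $\pi_j = 1 < \pi_{j+1}$ (using $\pi_{j+1} \neq 1$), so $j \notin \Des(\pi)$; but $\sigma_j = n > \sigma_{j+1}$, so $j \in \Des(\sigma)$. Combining these observations with the unchanged behavior elsewhere gives $\Des(\sigma) = (\Des(\pi) \setminus \{j-1\}) \cup \{j\}$, and summing yields $\maj(\sigma) = \maj(\pi) - (j-1) + j = \maj(\pi) + 1$. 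The edge case $j = 1$ is even simpler: only position $j = 1$ is exceptional, no descent is lost, a descent is gained at $1$, so again $\maj(\sigma) = \maj(\pi) + 1$.

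There is no real obstacle here; the only subtlety is ensuring that the hypothesis $\pi_n \neq 1$ (equivalently $j \leq n-1$) is used precisely where it is needed, namely to guarantee that position $j$ has a right neighbor $\pi_{j+1}$ to compare against and that $\pi_{j+1} \neq 1$ (automatic, as $1$ appears only at position $j$). Without this hypothesis the letter $1$ could sit at the end, in which case wrapping it to $n$ would not create a new descent but could still destroy the one at position $n-1$, breaking the identity.
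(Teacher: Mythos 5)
Your proof is correct and follows essentially the same route as the paper, which simply observes that $c$ preserves the descent set except that the descent just before the letter $1$ moves one unit to the right; your write-up makes this precise by tracking positions $j-1$ and $j$ and handling the edge case $j=1$. Nothing further is needed.
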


\begin{proof}
The action of $c$ on $\pi$ preserves the descent set, except that the descent just before $1$ moves
a single unit to the right.
\end{proof}

Decrementing by $1$ modulo $n$ also
defines an action of $\ZZ_n$ on ordered set partitions of $[n]$.
Moreover, we have an action of $\ZZ_n$ on segmented permutations given by preserving segmentation:
\begin{equation*}
c.(\pi, \alpha) := (c.\pi, \alpha). 
\end{equation*} 

Recall the map $\pi$ of Definition~\ref{segment-pi-map} from ordered set partitions to segmented words.
Although $\pi$ does not commute with the action of $\ZZ_n$ in general, we have commutativity
when we restrict to ordered set partitions whose terminal block size is $1$.

\begin{lemma}
\label{cycle-commute-permutations}
Let $\alpha = (\alpha_1, \dots, \alpha_k) \models n$ and suppose that $\alpha_k = 1$.
Let $\sigma \in \OP_{n, \alpha}$.  We have that
\begin{equation}
c.\pi(\sigma) = \pi(c.\sigma).
\end{equation}
\end{lemma}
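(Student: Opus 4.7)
The plan is to establish the segment-by-segment equalities $\pi[i](c.\sigma) = c.\pi[i](\sigma)$ for every $i = k, k-1, \dots, 1$ by downward induction on $i$; assembling these equalities (and observing that the shape $\alpha$ is preserved on both sides) yields the lemma. The base case $i = k$ is where the hypothesis $\alpha_k = 1$ comes in: since $B_k$ is a singleton $\{a\}$, we have $\pi[k](\sigma) = a$, and both $c.\pi[k](\sigma)$ and $\pi[k](c.\sigma)$ agree with the unique element of $c.B_k$.

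For the inductive step, assume the equality for all $j > i$; in particular the first letters of $\pi[i+1](c.\sigma)$ and $c.\pi[i+1](\sigma)$ coincide. Write $r$ for the first letter of $\pi[i+1](\sigma)$, and let $r''$ denote the first letter of $\pi[i+1](c.\sigma)$, so $r''$ is obtained from $r$ by decrementing modulo $n$. By Definition~\ref{segment-pi-map}, segment $i$ is determined solely by the sorted form of $B_i$ (or $c.B_i$) and by the value $r$ (or $r''$), so I would split into cases according to whether $1 \in B_i$. If $1 \notin B_i$, then $c.B_i$ is just $B_i$ decremented elementwise, preserving sorted order; in the subcase $r \neq 1$ one has $r'' = r - 1$ and the rotation index $m$ is the same on both sides, while in the subcase $r = 1$ one has $r'' = n$ and although the indices differ ($m = 0$ versus $m'' = \alpha_i$), both prescriptions return the fully sorted block. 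If instead $1 \in B_i$, write $B_i = \{1 < b_2 < \dots < b_{\alpha_i}\}$; then $c.B_i$ has sorted form $b_2 - 1 < \dots < b_{\alpha_i} - 1 < n$, and since $r \in B_{i+1}$ is disjoint from $B_i$ we have $r \neq 1$, hence $r'' = r - 1$. A short count of elements of $B_i$ and $c.B_i$ at most $r$ and $r''$ respectively yields $m'' = m - 1$, and substituting back into Definition~\ref{segment-pi-map} shows that both $\pi[i](c.\sigma)$ and $c.\pi[i](\sigma)$ equal
\begin{equation*}
(b_{m+1} - 1) \cdots (b_{\alpha_i} - 1)\, n\, (b_2 - 1) \cdots (b_m - 1),
\end{equation*}
with the obvious simplifications when $m = \alpha_i$ or $\alpha_i = 1$.

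The main obstacle lies in the case $1 \in B_i$, because the action of $c$ relocates $1$ from the smallest position of the sorted block to the largest (as $n$), genuinely perturbing the sorted order. The key point to verify is that the rotation-index shift $m \mapsto m - 1$ exactly compensates for this, so that $n$ occupies the slot previously held by $1$ within the segment. The hypothesis $\alpha_k = 1$ is precisely what prevents this conflict from appearing in the base case: if $\alpha_k \geq 2$ and $1 \in B_k$, then no rotation is applied to the terminal segment at all, and $c.\pi[k](\sigma)$ would leave $n$ at the front while $\pi[k](c.\sigma)$ would place it at the back, so the equality would already fail there.
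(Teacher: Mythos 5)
Your proof is correct and follows essentially the same route as the paper: a right-to-left, segment-by-segment verification in which the segments are unaffected by $c$ away from the block containing $1$, the segment of that block is computed explicitly (your displayed word $(b_{m+1}-1)\cdots(b_{\alpha_i}-1)\,n\,(b_2-1)\cdots(b_m-1)$ is exactly the paper's formula), and the wraparound subtlety when the passed-down first letter is $1$ versus $n$ (your case $m''=\alpha_i$ vs.\ $m=0$) is handled just as in the paper's $m=\alpha_{i_0}$ case. Your explicit computation of the shifted rotation index $m''=m-1$ and the closing remark explaining why $\alpha_k=1$ is needed are slightly more detailed than the paper's write-up, but the argument is the same.
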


\begin{proof}
Let $B_{i_0}$ be the block of $\sigma$ containing $1$.
Write the segments of $\pi(\sigma)$ and $\pi(c.\sigma)$ as 
$\pi(\sigma) = \pi[1] \cdot \pi[2] \cdot \ldots \cdot \pi[k]$ and
$\pi(c.\sigma) = \pi'[1] \cdot \pi'[2] \cdot \ldots \cdot \pi'[k]$.  We want to show that
$\pi[i] = c.\pi'[i]$ for all $i$.  This is true when $i = k$ because $\alpha_k = 1$.  
Definition~\ref{segment-pi-map}
 makes it clear that 
$\pi[i] = c.\pi'[i]$ for $i > i_0$.

 If 
$\pi[i_0] = j_{m+1} j_{m+2} \dots j_{\alpha_{i_0}} 1 j_2 \dots j_m$, we see that
$$\pi'[i_0] = (j_{m+1}-1) (j_{m+2}-1) \dots (j_{\alpha_{i_0}}-1) n (j_2 -1)\dots (j_m-1) = c.\pi[i_0].$$

Now consider the segments $\pi[i_0 - 1]$ and $\pi'[i_0 - 1]$.  If $m < \alpha_{i_0}$ above, then the first letter of 
$\pi'[i_0]$ is the first letter of $\pi[i_0]$, less one.  Definition~\ref{segment-pi-map} shows that
$\pi'[i_0 - 1] = c.\pi[i_0 - 1]$ in this case.  
If $m = \alpha_{i_0}$ above, then the first letter of $\pi[i_0]$ is $1$ and the first letter of $\pi'[i_0]$ is $n$.
Another application of Definition~\ref{segment-pi-map} shows that 
$\pi'[i_0 - 1] = c.\pi[i_0 - 1]$ in this case, as well.

Finally, the equality $\pi'[i_0 - 1] = c.\pi[i_0 - 1]$ and Definition~\ref{segment-pi-map} make it clear
 that $\pi'[i] = c.\pi[i]$ for all $i < i_0$.
\end{proof}

We are ready to prove our first equidistribution result.

\begin{theorem}
\label{same-distribution-permutations}
Let $\alpha$ be a composition of $n$.  We have that
\begin{equation}
\label{main-equation}
M_{n, \alpha}(q) = I_{n,\alpha}(q) = F_{n,\alpha}(q).
\end{equation}
\end{theorem}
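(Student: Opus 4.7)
The plan is to prove that $M_{n,\alpha}(q)$, $I_{n,\alpha}(q)$, and $F_{n,\alpha}(q)$ all satisfy the common recursion
\begin{equation}
\label{plan-recursion}
G_{n, \alpha}(q) = G_{n - \alpha_k, \alpha'}(q) \cdot \sum_{s = 0}^{n - \alpha_k} {\alpha_k - 1 + s \choose \alpha_k - 1} q^s,
\end{equation}
where $\alpha' = (\alpha_1, \ldots, \alpha_{k-1})$, with trivial base case $\alpha = (n)$. For $F_{n,\alpha}(q)$, (\ref{plan-recursion}) is immediate from its defining product. Granting the recursion for all three, induction on the number of blocks $k$ completes the proof.

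For $I_{n,\alpha}(q)$, I peel off the last block of $\sigma = (B_1 | \ldots | B_k)$. Let $\sigma' \in \OP_{n - \alpha_k, \alpha'}$ be the standardization of $(B_1 | \ldots | B_{k-1})$. Since standardization is order-preserving, it preserves inversion count, and the remaining inversions of $\sigma$ are exactly the pairs $(b, j)$ with $b = \min(B_k)$ and $j \in [n] \setminus B_k$ satisfying $j > b$, numbering $n - b - (\alpha_k - 1)$. For fixed $\sigma'$ and $b \in \{1, \ldots, n - \alpha_k + 1\}$, the remaining $\alpha_k - 1$ elements of $B_k$ may be any subset of $\{b+1, \ldots, n\}$, contributing a multiplicity of ${n - b \choose \alpha_k - 1}$. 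The substitution $s = n - b - \alpha_k + 1$ converts the sum over $b$ into the factor on the right of (\ref{plan-recursion}).

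For $M_{n,\alpha}(q)$, Lemma \ref{compress} supplies a minimaj-preserving map $\sigma \mapsto \tau := \overline{(B_1 | \ldots | B_{k-1} | \min(B_k))}$ from $\OP_{n, \alpha}$ to $\OP_{n', (\alpha_1, \ldots, \alpha_{k-1}, 1)}$, where $n' = n - \alpha_k + 1$; a direct count shows its fibers have size ${n - c_k(\tau) \choose \alpha_k - 1}$, where $c_k(\tau)$ denotes the value of $\tau$'s singleton last block. Thus $M_{n, \alpha}(q) = \sum_\tau {n - c_k(\tau) \choose \alpha_k - 1} q^{\minimaj(\tau)}$. I partition the index set into free $\ZZ_{n'}$-orbits of size $n'$, each containing a unique representative $\tau^*$ with terminal block $\{1\}$. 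Because the terminal block is a singleton, Lemma \ref{cycle-commute-permutations} gives $\pi(c.\tau) = c.\pi(\tau)$; combining Lemma \ref{cycle-major-permutations} with the observation that $\maj(c.\pi) = \maj(\pi) - (n' - 1)$ when $\pi_{n'} = 1$ (a short descent-set check), iterating around the orbit from $\tau^*$ forces $\minimaj(\tau) = M - (c_k(\tau) - 1)$ for every $\tau$ in the orbit, where $M := \minimaj(\tau^*)$.

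It remains to express $M$ through $\OP_{n - \alpha_k, \alpha'}$. Writing $\tau^* = (C_1 | \ldots | C_{k-1} | \{1\})$, the first letter of the final segment of $\pi(\tau^*)$ is $1$, so unwinding Definition \ref{segment-pi-map} inductively shows that each earlier segment of $\pi(\tau^*)$ equals the corresponding segment of $\pi(\sigma^*)$ with every letter incremented by $1$, where $\sigma^* := \overline{(C_1 | \ldots | C_{k-1})} \in \OP_{n - \alpha_k, \alpha'}$ is the standardization of the first $k-1$ blocks of $\tau^*$ (which partition $\{2, \ldots, n'\}$). The trailing $1$ creates exactly one new descent, at position $n' - 1$, so Lemma \ref{minimaj-alternative} gives $M = \minimaj(\sigma^*) + (n' - 1)$. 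Since $\tau^* \leftrightarrow \sigma^*$ is a bijection between orbit representatives and $\OP_{n - \alpha_k, \alpha'}$, substituting $s = n' - c_k$ inside each orbit sum and then summing over $\sigma^*$ converts $M_{n, \alpha}(q)$ into the right-hand side of (\ref{plan-recursion}). The main obstacle is precisely this orbit analysis, especially the shift identity $M = \minimaj(\sigma^*) + (n' - 1)$, which demands careful bookkeeping for the recursive construction of $\pi$; once this is in hand, the recursion follows by routine rearrangement.
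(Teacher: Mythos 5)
Your proposal is correct and takes essentially the same route as the paper: you show that all three polynomials satisfy the recursion $G_{n,\alpha}(q) = G_{n-\alpha_k,(\alpha_1,\dots,\alpha_{k-1})}(q)\sum_{s=0}^{n-\alpha_k}{\alpha_k-1+s \choose \alpha_k-1}q^s$, handling $I_{n,\alpha}$ by peeling off the last block and $M_{n,\alpha}$ via Lemma~\ref{compress} together with the $\ZZ_{n'}$-orbit analysis built on Lemmas~\ref{minimaj-alternative}, \ref{cycle-major-permutations}, and \ref{cycle-commute-permutations}, exactly as in the paper. The only difference is cosmetic: you anchor each orbit at the representative with terminal block $\{1\}$ (necessitating the wrap-around identity $\maj(c.\pi)=\maj(\pi)-(n'-1)$ and the shift $M=\minimaj(\sigma^*)+(n'-1)$), whereas the paper anchors at $(\sigma \,|\, n-\alpha_k+1)$, which makes the minimaj bookkeeping immediate.
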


\begin{proof}
The $F$-polynomials satisfy the recursion
\begin{equation}
\label{F-recursion}
F_{n, (\alpha_1, \dots, \alpha_{k-1}, \alpha_k)}(q) = 
F_{n - \alpha_k, (\alpha_1, \dots, \alpha_{k-1})}(q) \times 
\left[ {\alpha_k - 1 \choose \alpha_k - 1} + {\alpha_k \choose \alpha_k - 1} q + \cdots + {n-1 \choose \alpha_k - 1} q^{n - \alpha_k}\right].
\end{equation}

It is easy to see 
that $I_{n, \alpha}(q)$ satisfies the recursion in Equation~\ref{F-recursion}.  Given an ordered set partition
$\sigma = (B_1 | \dots | B_k)$ of shape $(\alpha_1, \dots, \alpha_k)$,
 the block $B_k$ is involved in $n - \min(B_k)$ inversions.  After selecting a number
$1 \leq j \leq n$ for  $\min(B_k)$, we have  ${n-j \choose \alpha_k - 1}$
 choices to complete the block $B_k$.  It follows
that $I_{n, \alpha}(q) = F_{n, \alpha}(q)$ for all $\alpha$.

To prove that $F_{n,\alpha}(q) = M_{n,\alpha}(q)$,
 write $\alpha = (\alpha_1, \dots, \alpha_{k-1}, \alpha_k)$ and
assume inductively 
that $F_{n - \alpha_k, (\alpha_1, \dots, \alpha_{k-1}}(q) = M_{n - \alpha_k, (\alpha_1, \dots, \alpha_{k-1})}(q)$. 
For any ordered set partition
$\sigma$ of shape $(\alpha_1, \dots, \alpha_{k-1})$, let $(\sigma | n-\alpha_k+1)$ be the ordered set partition of type 
$(\alpha_1, \dots, \alpha_{k-1}, 1)$ obtained by appending the singleton $\{n-\alpha_k+1\}$ to the end of $\sigma$.
Clearly we have $\pi(\sigma | n - \alpha_k + 1) = \pi(\sigma) \cdot (n-\alpha_k+1)$, so that by 
Lemma~\ref{minimaj-alternative},
\begin{equation}
\minimaj(\sigma | n - \alpha_k + 1) = \maj(\pi(\sigma) \cdot (n-\alpha_k+1)) = \maj(\pi(\sigma)) = \minimaj(\sigma).
\end{equation}
Lemmas~\ref{minimaj-alternative}, 
\ref{cycle-commute-permutations}, and \ref{cycle-major-permutations} imply that for $0 \leq d \leq n - \alpha_k$,
\begin{equation}
\minimaj(c^d.(\sigma | n - \alpha_k + 1)) = \minimaj(\sigma) + d.
\end{equation}
This gives the recursion in Equation~\ref{F-recursion} for  $M_{n, \alpha}(q)$ when $\alpha_k = 1$.  For general 
$\alpha_k \geq 1$,
we apply Lemma~\ref{compress} to see that for any $d$, there are precisely 
${n-1 \choose \alpha_k - d + 1}$ ordered set partitions $\sigma'$ of $[n]$ of type $\alpha$ satisfying 
$\minimaj(\sigma') = \minimaj(\sigma) + d$.  This shows that the recursion of
Equation~\ref{F-recursion} holds for 
$M_{n, \alpha}(q)$.
\end{proof}

Although our proof of Theorem~\ref{same-distribution-permutations} is combinatorial, it is not bijective.  
J. Remmel and A. T. Wilson are preparing a bijective proof of Theorem~\ref{same-distribution-permutations}.

If we coarsen Theorem~\ref{same-distribution-permutations} to consider ordered set partitions
with $k$ blocks, we get the following corollary.  

\begin{corollary}
\label{same-distribution-stirling}
Let $n \geq k > 0$.  We have that
\begin{equation}
\label{main-equation}
M_{n, k}(q) = I_{n, k} (q) = [k]!_q \Stir_{n,k}(q),
\end{equation}
where the $q$-Stirling number $\Stir_{n,k}(q)$ is defined by the recursion
\begin{equation}
\Stir_{n,k}(q) = \Stir_{n-1,k-1}(q) + [k]_q \Stir_{n-1,k}(q)
\end{equation}
and the initial condition $\Stir_{0,k}(q) := \delta_{0,k}$.
\end{corollary}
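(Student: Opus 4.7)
The plan is to deduce the corollary from Theorem~\ref{same-distribution-permutations} together with a single combinatorial recursion. Summing the identity $M_{n,\alpha}(q) = I_{n,\alpha}(q)$ over compositions $\alpha \models n$ with $\ell(\alpha) = k$ immediately yields $M_{n,k}(q) = I_{n,k}(q)$, so the content of the corollary reduces to showing that $I_{n,k}(q) = [k]!_q \, \Stir_{n,k}(q)$. Setting $T_{n,k}(q) := I_{n,k}(q)/[k]!_q$, I will verify that $T_{n,k}(q)$ satisfies the defining recursion and initial condition of $\Stir_{n,k}(q)$.

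The key lemma is the recursion
\begin{equation*}
I_{n,k}(q) = [k]_q \bigl( I_{n-1,k-1}(q) + I_{n-1,k}(q) \bigr),
\end{equation*}
which I would prove by conditioning on the block $B_i$ containing the letter $n$ in $\sigma = (B_1 | \cdots | B_k) \in \OP_{n, k}$. The crucial observation is that, whether $\{n\} = B_i$ is a singleton block or $|B_i| \geq 2$, the letter $n$ contributes exactly $k - i$ inversions to $\sigma$: since $n$ is the maximum element of $[n]$, it can only play the role of the larger entry in an inversion pair, and it does so once for each of the $k - i$ blocks following $B_i$, all of whose minima are automatically less than $n$. Moreover, deleting $n$ from $\sigma$ leaves the minimum of every remaining block unchanged and preserves their relative order, so inversions not involving $n$ biject with inversions of the resulting ordered set partition $\sigma'$, which lies in $\OP_{n-1, k-1}$ in the singleton case and in $\OP_{n-1, k}$ otherwise.

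Summing $q^{\inv(\sigma)} = q^{\inv(\sigma') + (k-i)}$ over the two cases, each $\sigma' \in \OP_{n-1, k-1}$ lifts to $k$ singleton insertions of $\{n\}$ (one per position $i$), contributing $[k]_q \cdot I_{n-1, k-1}(q)$, while each $\sigma' \in \OP_{n-1, k}$ lifts to $k$ non-singleton insertions (one per block), contributing $[k]_q \cdot I_{n-1, k}(q)$. This establishes the displayed recursion. Dividing through by $[k]!_q = [k]_q \cdot [k-1]!_q$ yields
\begin{equation*}
T_{n,k}(q) = T_{n-1, k-1}(q) + [k]_q \, T_{n-1, k}(q),
\end{equation*}
while the base case $I_{0,k}(q) = \delta_{0,k}$ gives $T_{0,k}(q) = \delta_{0,k}$, matching the defining recursion of $\Stir_{n,k}(q)$.

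There is no serious obstacle beyond the bookkeeping of the inversion count under deletion of the largest letter; the argument works cleanly precisely because $n$ cannot serve as the smaller entry of any inversion, so its contribution depends only on the position of its block and not on whether that block is a singleton, which is exactly what allows the two cases to combine into a single factor of $[k]_q$.
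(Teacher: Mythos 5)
Your argument is correct, and it differs from the paper's proof in one substantive respect. The paper handles the identity $I_{n,k}(q) = [k]!_q \Stir_{n,k}(q)$ simply by citing Steingr\'imsson's Theorem 4 (his statistic $\mathrm{ROS}$ is our $\inv$), and then obtains $M_{n,k}(q) = I_{n,k}(q)$ exactly as you do, by applying Theorem~\ref{same-distribution-permutations} (summed over all shapes $\alpha \models n$ with $k$ parts). You instead re-derive Steingr\'imsson's identity from scratch via the insertion recursion $I_{n,k}(q) = [k]_q \bigl( I_{n-1,k-1}(q) + I_{n-1,k}(q) \bigr)$, obtained by conditioning on the block containing $n$; your key observation that $n$ contributes exactly $k-i$ inversions regardless of whether its block is a singleton is right, since under the paper's convention an inversion pairs a letter with the minimum of a strictly later block, so $n$ can only serve as the larger entry and deleting it disturbs no other block minimum. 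Dividing by $[k]!_q$ then reproduces the defining recursion of $\Stir_{n,k}(q)$ (with the harmless convention $I_{n,0}(q) = \delta_{n,0}$ at the boundary). What your route buys is a self-contained, elementary proof of the $\inv$ half that does not lean on the literature; what the paper's route buys is brevity, deferring the Stirling identity to its original source and keeping the corollary's content focused on the new equidistribution $M_{n,k}(q) = I_{n,k}(q)$.
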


\begin{proof}
Steingr\'imsson \cite[Theorem 4]{Stein} proved that 
$I_{n,k}(q) = [k]!_q \Stir_{n,k}(q)$ (Steingr\'imsson uses $\mathrm{ROS}$
for our $\mathrm{inv}$).  Now apply Theorem~\ref{same-distribution-permutations}.
\end{proof}

In \cite{RWSet}, Remmel and Wilson prove bijectively that $\mathrm{inv}$ is equidistributed on 
$\OP_{n,k}$ with a different $\mathrm{maj}$-like statistic on ordered set partitions 
due to Steingr\'imsson \cite{Stein}.  However, the statistics considered in \cite{RWSet} are not 
equidistributed on the finer sets $\OP_{n, \alpha}$.

Relating Corollary~\ref{same-distribution-stirling} to the Delta Conjecture yields the following result.

\begin{corollary}
\label{valley-specializations}
The coefficients of the monomial quasisymmetric function $M_{(1^n)}$ in 
the monomial expansions of 
$\Val_{n,k}(x; q,0)$ and $\Val_{n,k}(x; 0,q)$ are equal.
\end{corollary}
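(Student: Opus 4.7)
The plan is to combine Theorem~\ref{valley-interpretation} with Corollary~\ref{same-distribution-stirling}; essentially all the work needed is already packaged in Section~\ref{Permutations}. The key observation is that the coefficient of $M_{(1^n)}$ in any quasisymmetric function of total degree $n$ is nothing other than the coefficient of the squarefree monomial $x_1 x_2 \cdots x_n$, which under the formula of Theorem~\ref{valley-interpretation} corresponds to restricting the outer sum to the single weak composition $\beta = (1^n)$.

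First, I would apply Theorem~\ref{valley-interpretation} to write
\begin{align*}
[M_{(1^n)}]\, \Val_{n,k}(x; q, 0) &= \sum_{\mu \in \OP_{(1^n), k+1}} q^{\inv(\mu)}, \\
[M_{(1^n)}]\, \Val_{n,k}(x; 0, q) &= \sum_{\mu \in \OP_{(1^n), k+1}} q^{\minimaj(\mu)},
\end{align*}
where $[M_{(1^n)}]$ denotes extraction of the coefficient of $M_{(1^n)}$. Second, I would note that when $\beta = (1^n)$ the underlying multiset $\{1^{\beta_1}, 2^{\beta_2}, \dots\}$ is just the ordinary set $\{1, 2, \dots, n\}$, so the ordered multiset partitions indexed by $\OP_{(1^n), k+1}$ are exactly the ordered set partitions $\OP_{n, k+1}$ from Section~\ref{Permutations}, and the statistics $\inv$ and $\minimaj$ coincide with their set-partition counterparts. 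The two coefficients above are therefore $I_{n, k+1}(q)$ and $M_{n, k+1}(q)$ respectively.

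Corollary~\ref{same-distribution-stirling} then gives $I_{n, k+1}(q) = M_{n, k+1}(q)$, which completes the argument. There is no substantive obstacle here, since the real content lives in Theorem~\ref{same-distribution-permutations}; the only thing requiring care is to check that the word-level definitions of $\inv$ and $\minimaj$ from Section~\ref{Words} genuinely restrict to the set-partition definitions of Section~\ref{Permutations} on the squarefree slice, which is immediate from the definitions once one notes that no two letters coincide in the multiset.
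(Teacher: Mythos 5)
Your proposal is correct and matches the paper's intended argument: the paper derives this corollary precisely by relating Corollary~\ref{same-distribution-stirling} (equivalently Theorem~\ref{same-distribution-permutations}) to the combinatorial formulas of Theorem~\ref{valley-interpretation}, with the coefficient of $M_{(1^n)}$ picked out by the $\beta = (1^n)$ term, exactly as you do. Your check that $\OP_{(1^n),k+1}$ with the multiset-partition statistics is literally $\OP_{n,k+1}$ with the set-partition statistics is the only detail the paper leaves implicit, and you handle it correctly.
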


Corollary~\ref{valley-specializations} proves
a conjecture of Haglund, Remmel, and Wilson
\cite[Conjecture 4.1]{HRW} 
and allows us to add
$\Val_{n,k}(x;0,q)$ to the list in 
Theorem~\ref{delta-q-t-zero-hrw}.

In the next section we will strengthen Corollary~\ref{valley-specializations}
by proving 
$\Val_{n,k}(x;q,0) = \Val_{n,k}(x;0,q)$.  If the Delta Conjecture is true,
we would have the stronger symmetry 
$\Val_{n,k}(x; q,t) = \Val_{n,k}(x; t,q)$.

\section{Words and ordered multiset partitions}
\label{Words}

\subsection{Ordered Multiset Partitions}  
Many definitions and results on ordered set partitions and permutations generalize naturally
to ordered multiset partitions and words.
However, we will discover a significant divergence in the behavior of the statistics $\inv$ and $\minimaj$.

For any weak composition $\beta = (\beta_1, \beta_2, \dots )$, an
{\em ordered multiset partition of weight $\beta$} is a sequence 
$\mu = (B_1 | \cdots | B_k)$ of nonempty {\bf sets} such that 
$\{1^{\beta_1}, 2^{\beta_2}, \cdots \}$ is the multiset union $B_1 \cup \dots \cup B_k$.  
The total number of letters is called the {\em size} of $\mu$,
the sets $B_i$ are called the {\em blocks} of $\mu$, and the composition
$\alpha = (|B_1|, \dots, |B_k|)$ of set cardinalities is called the {\em shape} of $\mu$.

For example, we have that 
\begin{equation*}
\mu = (245 | 134 | 2457 | 4) 
\end{equation*}
is an ordered multiset partition with $4$ blocks of size $11$, weight 
$\beta = (1,2,1,4,2,0,1)$, and shape $(3,3,4,1)$.  Although letters can be repeated in an ordered multiset
partition, no single block of an ordered multiset partition can have repeated letters.

Given integers $n \geq k$, a weak composition $\beta \models_0 n$, and
a composition $\alpha \models n$, we introduce the following families of ordered multiset partitions:
\begin{align}
\OP_{\beta} &:= \{ \text{all ordered multiset partitions of weight $\beta$} \}, \\
\OP_{\beta,k} &:= \{ \text{all ordered multiset partitions of weight $\beta$ with $k$ blocks} \}, \\
\OP_{\beta, \alpha} &:= \{ \text{all ordered multiset partitions of weight $\beta$ of shape $\alpha$} \}.
\end{align}

Let $\mu = (B_1 | \dots | B_k) \in \OP_{\beta, \alpha}$.  
As in the case of ordered set partitions,
an {\em inversion} in $\mu$ is a pair $i < j$ such that 
$i = \min(B_m)$ and $j \in B_{\ell}$ for some $\ell < m$.  
The statistic $\inv(\mu)$ counts the number of inversions of $\mu$.
If $\mu$ is the ordered multiset partition shown above, we have
$\inv(\mu) = 10$.

Given a weak composition $\beta \models n$, let $W_{\beta}$ denote the set of all words 
$w_1 \dots w_n$ containing $\beta_i$ copies of $i$, for all $i$.  
For any ordered multiset partition $\mu \in \OP_{\beta, \alpha}$,
let $W(\mu)$ be the set of words in $W_{\beta}$ obtained by rearranging the letters of $\mu$ within its blocks.  
As in Section~\ref{Introduction}, the  
$\minimaj$ statistic on ordered multiset partitions is 
\begin{equation}
\minimaj(\mu) = \min \{ \maj(w) \,:\, w \in W(\mu) \}.
\end{equation}

As in Section~\ref{Permutations}, we introduce the following generating functions for 
$\inv$ and $\minimaj$ on ordered multiset partitions.
 \begin{equation}
 \begin{cases}
 I_{\beta}(q) := \sum_{\mu \in \OP_{\beta}} q^{\inv(\mu)}, \\
  I_{\beta,k}(q) := \sum_{\mu \in \OP_{\beta,k}} q^{\inv(\mu)}, \\
    I_{\beta,\alpha}(q) := \sum_{\mu \in \OP_{n,\alpha}} q^{\inv(\mu)},
 \end{cases}
  \begin{cases}
 M_{\beta}(q) := \sum_{\mu \in \OP_{\beta}} q^{\minimaj(\mu)}, \\
  M_{\beta,k}(q) := \sum_{\mu \in \OP_{\beta,k}} q^{\minimaj(\mu)}, \\
    M_{\beta,\alpha}(q) := \sum_{\mu \in \OP_{\beta,\alpha}} q^{\minimaj(\mu)}.
 \end{cases}
 \end{equation}
 
 In light of Theorem~\ref{same-distribution-permutations}, the reader may guess that we have
 $I_{\beta, \alpha}(q) = M_{\beta, \alpha}(q)$ for any weak composition $\beta$ and any 
 composition $\alpha$ with $|\beta| = |\alpha|$.  However, this statement is false.  For example,
 consider $\beta = (2,2,1)$ and $\alpha = (2,1,2)$.  We have
 \begin{equation*}
 \OP_{(2,2,1),(2,1,2)} = \{ (12|3|12), (12|1|23), (23|1|12), (12|2|13), (13|2|12) \}.
 \end{equation*}
 We leave it for the reader to check that
 \begin{equation*}
 I_{(2,2,1),(2,1,2)}(q) = q + 2q^2 + q^3 + q^4
 \end{equation*}
 whereas
 \begin{equation*}
 M_{(2,2,1),(2,1,2)}(q) = q + q^2 + 2q^3 + q^4.
 \end{equation*}
 Despite this, we will prove the coarser equality
 $I_{\beta, k}(q) = M_{\beta,k}(q)$ for any $\beta \models_0 n$ and any $k \leq n$.

\subsection{Segmented Words}
Given a weak composition $\beta \models_0 n$, let $W_{\beta}$ be the collection of words
$w_1 \dots w_n$ with $\beta_i$ copies of the letter $i$ for all $i$.  In particular, if $\beta = (1^n)$ 
we get $W_{(1^n)} = S_n$.

A {\em $k$-segmented word} is a pair $(w, \alpha)$, where $w = w_1 \dots w_n$ is a word and 
$\alpha \models n$ is a composition with $\ell(\alpha) = k$.  
We represent a $k$-segmented word $(w, \alpha)$ as 
$w = w[1] \cdot \ldots \cdot w[k]$, were dots are placed after positions 
$\alpha_1, \alpha_1 + \alpha_2, \dots$ of the word $w$.
The subwords $w[1], w[2], \dots, w[k]$ are called {\em segments}.
If $\mathrm{stat}$ is any statistic on words, we get a statistic on segmented words by
$\mathrm{stat}(w, \alpha) := \mathrm{stat}(w)$.

Given any $k$-segmented word $w = w[1] \cdot \ldots \cdot w[k]$, we have a natural ordered 
multiset partition with $k$ blocks obtained by replacing dots with bars and deleting repeated letters.
For example, the ordered multiset partition corresponding to
\begin{equation*}
243 \cdot 2 \cdot 114 \cdot 34
\end{equation*} 
is 
\begin{equation*}
(243 | 2 | 14 | 34). 
\end{equation*} 
In particular, the length of a segmented word could be larger than the size of the associated
ordered multiset partition.
The length of a segmented word equals the size of the corresponding 
ordered multiset partition if and only if none of the segments have repeated letters.

Definition~\ref{segment-pi-map} can be repeated verbatim to get a function
\begin{equation}
w: \OP_{\beta, k} \longrightarrow \{\text{$k$-segmented words} \}.
\end{equation}
For example, we have
\begin{equation*}
w(124|2|13|245|34) = 412 \cdot 2 \cdot 13 \cdot 452 \cdot 34.
\end{equation*}
 Lemma~\ref{minimaj-alternative} remains true in the presence of repeated letters.
 
 \begin{lemma}
 \label{minimaj-alternative-word}
 Let $\mu$ be an ordered multiset partition with $w(\mu) = (w, \alpha)$.  
 The major index statistic $\maj$ achieves a unique minimum value
 on $W(\mu)$, and this value is achieved at $w$. In particular, we have
 $\minimaj(\mu) = \maj(w(\mu))$.
\end{lemma}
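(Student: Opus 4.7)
The plan is to mimic the proof of Lemma~\ref{minimaj-alternative} (whose proof is left to the reader), exploiting the fact that within each block of an ordered multiset partition the letters are distinct; cross-block ties are handled correctly by the weak inequality $j_m^{(i)} \leq r$ in Definition~\ref{segment-pi-map}, so the block-local argument carries over verbatim.

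First I would compute the descent structure of $w(\mu)$ directly: for $i<k$, each segment $w[i] = j_{m_i+1}^{(i)} \cdots j_{\alpha_i}^{(i)} j_1^{(i)} \cdots j_{m_i}^{(i)}$ has at most one internal descent (at relative position $\alpha_i - m_i$ when $0 < m_i < \alpha_i$) and a boundary descent with $w[i+1]$ iff $m_i = 0$, while $w[k]$ is ascending; this yields a closed-form expression for $\maj(w(\mu))$. Next I would prove by downward induction on $i$ that for any $w \in W(\mu)$ whose segments $w[j]$ with $j > i$ agree with the construction, the unique minimum of $\maj(w)$ over orderings of $B_i$ is attained at the construction's $w[i]$. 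Given the first letter $r$ of $w[i+1]$, the relevant local contribution is $C_i := \bigl(\text{sum of absolute descent positions in } w[i]\bigr) + p_{i+1}\cdot[\text{last letter of } w[i] > r]$, where $p_{i+1} = \alpha_1 + \cdots + \alpha_i$, and I would analyze this via a finite case split on the number $d$ of internal descents. For $d = 1$ with no boundary descent, feasibility forces $q \leq m^* := \max\{m : j_m^{(i)} \leq r\}$, and $C_i = p_i + (\alpha_i - q)$ is minimized uniquely at $q = m^*$, recovering the construction. For $d \geq 2$, the constraint $q_d \geq \alpha_i - m^*$ (so the final ascending run fits inside $\{j_1, \ldots, j_{m^*}\}$) gives $C_i \geq 2p_i + 1 + \max(2,\,\alpha_i - m^*)$, which exceeds the construction's value by at least $p_i + 1 > 0$.

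Finally I would handle the cascading freedom: a deviation in some $w[j]$ with $j > i$ might alter $r$ and a priori allow a smaller $C_i$. The key observation is that the locally minimized value $C_i^*(r)$ lies in $[0,p_{i+1}]$ and is non-increasing in $r$, so a deviation that decreases $r$ weakly increases $C_i^*$ (no gain), while a deviation that increases $r$ forces $w[i+1]$ to carry an additional descent, contributing at least $p_{i+1} + 1$ to $\maj$ and strictly exceeding the maximum possible gain $\leq p_{i+1}$ in $C_i$. The main obstacle is this bookkeeping step: while the block-local case analysis is straightforward, establishing the cascading bound requires carefully separating types of deviations (adding a descent versus rearranging within a fixed $d$) and verifying the feasibility constraints for each, with the critical numerical inequality loss $\geq$ gain $+1$ coming from comparing $p_{i+1}+1$ against the coarse upper bound $p_{i+1}$ on the local minimum.
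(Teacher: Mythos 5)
The paper offers no proof of Lemma~\ref{minimaj-alternative-word} to compare against: it is asserted as the repeated-letter analogue of Lemma~\ref{minimaj-alternative}, whose proof is explicitly ``left to the reader,'' so your argument must be judged on its own. Judged so, your plan is essentially sound. The decomposition $\maj=\sum_i C_i$, with the boundary descent between segments $i$ and $i+1$ charged to segment $i$ at absolute position $p_{i+1}=\alpha_1+\cdots+\alpha_i$, is correct; your description of the descent structure of $w(\mu)$ is right (the weak inequality $j_m^{(i)}\leq r$ in Definition~\ref{segment-pi-map} does handle cross-block equalities, since descents are strict and letters within a block are distinct); and the block-local case analysis checks out: for fixed $r$, a one-descent arrangement with no boundary descent has its final ascending run contained in the $m^*$ letters $\leq r$, so it costs at least $p_i+(\alpha_i-m^*)$ with equality only at the construction, while two or more descents, or a boundary descent, cost at least $p_i+1$ more.

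The one step that needs strengthening is the cascading bound. As phrased, ``a deviation that increases $r$ forces $w[i+1]$ to carry an additional descent, contributing at least $p_{i+1}+1$'' only covers the situation where the segments to the right of $i+1$ agree with the construction; a global deviation may simultaneously rearrange those segments, changing $r_{i+2}$ and hence the baseline against which ``additional'' is measured, so the claim cannot be invoked segment by segment in isolation. The clean repair is to carry your inequality inside the induction: prove, by downward induction on $i$, both (a) the construction's segments $i,\dots,k$ uniquely minimize $C_i+\cdots+C_k$ over all arrangements of $B_i,\dots,B_k$, and (b) any arrangement of $B_i,\dots,B_k$ whose segment-$i$ first letter strictly exceeds the construction's first letter costs at least $p_i+1$ more than the construction. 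Statement (b) at level $i+1$, combined with $0\leq C_i^*(r)\leq p_{i+1}$ and the monotonicity of $C_i^*$ in $r$, is exactly what pushes (a) and (b) down to level $i$; the verification of (b) at level $i$ reuses your local case split (a larger first letter forces either two internal descents, or an internal descent together with the boundary descent). With that amendment the argument closes, gives the claimed unique minimizer, and proves the lemma.
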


For example, we have
\begin{equation*}
\minimaj(124|2|13|245|34) = \maj(412 \cdot 2 \cdot 13 \cdot 452 \cdot 34) 
= 1 + 4 + 8 = 13.
\end{equation*}

\subsection{Recursions for $\inv$ and $\minimaj$}
In this subsection we develop recursions for $\inv$ and $\minimaj$ on ordered multiset partitions.
We will see that these recursions do not coincide, which explains why the generating 
functions for these statistics disagree on the sets $\OP_{\beta, \alpha}$.

Let $\alpha = (\alpha_1, \dots, \alpha_k) \models n$ be a composition and let
$\beta = (\beta_1, \dots, \beta_m) \models_0 n$ be a weak composition.
For any subset $S \subseteq [m]$, let $\chi(S) = (\chi(S)_1, \dots, \chi(S)_m)$ 
be the length $m$ indicator vector defined by
\begin{equation}
\chi(S)_i = \begin{cases}
1 & i \in S, \\
0 & i \notin S.
\end{cases}
\end{equation}
The recursion for $\inv$ on ordered multiset partitions is as follows.

\begin{lemma}
\label{inv-recursion-word}
Preserve the notation from above.  The generating function $I_{\beta, \alpha}(q)$ satisfies the following formula.
\begin{equation}
I_{\beta, \alpha}(q) = \sum_S q^{\sum_{i = \min(S)+1}^m (\beta_i - \chi(S)_i)} I_{\beta - \chi(S), (\alpha_1, \dots, \alpha_{k-1})}(q),
\end{equation}
where the sum is over all subsets $S \subseteq [m]$ such that $|S| = \alpha_k$ and $\beta_i > 0$ for all 
$i \in S$.
\end{lemma}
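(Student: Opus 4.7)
The plan is to prove the recursion by a direct decomposition of $\OP_{\beta, \alpha}$ according to the terminal block $B_k$, followed by a clean split of the inversion statistic into the part contributed by $B_k$ and the part living entirely in $B_1, \dots, B_{k-1}$.

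First I would observe that specifying $\mu = (B_1 \mid \dots \mid B_k) \in \OP_{\beta, \alpha}$ is the same as specifying a pair $(S, \mu')$ where $S := B_k$ is a subset of $[m]$ with $|S| = \alpha_k$ satisfying $\beta_i > 0$ for each $i \in S$ (so that the letter $i$ is actually available in the weight $\beta$), together with $\mu' = (B_1 \mid \dots \mid B_{k-1}) \in \OP_{\beta - \chi(S), (\alpha_1, \dots, \alpha_{k-1})}$. This gives a bijection between $\OP_{\beta, \alpha}$ and the disjoint union, over all allowed $S$, of $\OP_{\beta - \chi(S), (\alpha_1, \dots, \alpha_{k-1})}$.

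Next I would split the inversion count. Each inversion of $\mu$ is a pair $i < j$ with $i = \min(B_p)$ and $j \in B_\ell$ for some $\ell < p$. I partition these inversions by whether $p < k$ or $p = k$. Inversions with $p < k$ depend only on $\mu'$ and contribute exactly $\inv(\mu')$. Inversions with $p = k$ must have $i = \min(B_k) = \min(S)$ and $j > i$ lying in the multiset $B_1 \cup \dots \cup B_{k-1}$, whose multiplicities are recorded by $\beta - \chi(S)$. The number of such $j$ is therefore the total number of letters in that multiset strictly greater than $\min(S)$, namely $\sum_{i = \min(S)+1}^m (\beta_i - \chi(S)_i)$.

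Combining the two observations and summing $q^{\inv(\mu)} = q^{\inv(\mu')} \cdot q^{\sum_{i > \min(S)} (\beta_i - \chi(S)_i)}$ over the bijection above gives the claimed formula. The only bookkeeping subtlety is to verify that no inversions are missed: letters of $B_k$ other than $\min(B_k)$ never play the role of ``$i$'' in an inversion, and letters of $B_k$ also cannot play the role of ``$j$'' since no block comes after $B_k$. This decomposition is completely routine, so I expect no serious obstacle in carrying it out.
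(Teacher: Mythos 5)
Your proposal is correct and follows essentially the same argument as the paper: peel off the terminal block $S = B_k$, observe that the remaining blocks form an element of $\OP_{\beta - \chi(S), (\alpha_1, \dots, \alpha_{k-1})}$, and note that the new inversions are exactly the occurrences of letters greater than $\min(S)$ among $B_1, \dots, B_{k-1}$, which number $\sum_{i = \min(S)+1}^m (\beta_i - \chi(S)_i)$. Your extra bookkeeping (that letters of $B_k$ other than $\min(B_k)$ contribute no inversions) is a fine, if brief, elaboration of what the paper leaves implicit.
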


\begin{proof}
Consider a typical ordered multiset partition $\mu = (B_1 | \dots | B_{k-1} | B_k) \in \OP_{\beta, \alpha}$.  Writing 
$S = B_k$,
we have that $(B_1 | \dots | B_{k-1}) \in \OP_{\beta - \chi(S), (\alpha_1, \dots, \alpha_{k-1})}$.  Moreover, the addition of 
$S$ to the end of this ordered multiset partition contributes one inversion for each element of the sets $B_i$ 
($1 \leq i \leq k-1$) which is $> \min(S)$.
\end{proof}

The recursion for $\minimaj$ is different and a bit more involved to prove.
Our starting point is the following generalization of Lemma~\ref{cycle-major-permutations} from $S_n$ to
 to $W_{\beta}$ for $\beta \models_0 n$ with $\ell(\beta) = m$.  
 As before, we let $\ZZ_m = \langle c \rangle$ act on $W_{\beta}$ by
 decrementing all letters by $1$ modulo $m$.
 
\begin{lemma}
\label{cycle-major-words}
Let $w = w_1 \dots w_n \in W_{\beta}$ where $\ell(\beta) = m$
and  $w_n \neq 1$.  Then $\maj(c.w) = \maj(w) + \beta_1$.
\end{lemma}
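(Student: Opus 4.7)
The plan is to adapt the argument of Lemma~\ref{cycle-major-permutations} to the multiset setting by tracking how individual descents are affected by the action of $c$. In the permutation case the unique $1$ simply migrates the unique adjacent descent one step to the right; for words, each of the $\beta_1$ copies of $1$ will shift a descent, and the total shift should add up to $\beta_1$.

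The key observation is that if both $w_i$ and $w_{i+1}$ are $> 1$, then $c.w_i = w_i - 1$ and $c.w_{i+1} = w_{i+1} - 1$, so the descent status at position $i$ is unchanged. Hence $\Des(w)$ and $\Des(c.w)$ can differ only at positions adjacent to a letter equal to $1$, i.e.\ at the boundaries of maximal runs of $1$'s in $w$. I would then analyze each such maximal run separately, say occupying positions $a, a+1, \ldots, b$. Since $w_n \neq 1$ we have $b < n$ and $w_{b+1} \geq 2$; a short case check shows that in $c.w$ the descent at position $a-1$ (present in $w$ when $a > 1$, since $w_{a-1} \geq 2 > 1 = w_a$) is removed, while a new descent at position $b$ appears (since $c.w_b = m > w_{b+1} - 1 = c.w_{b+1}$). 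This contributes $b - (a-1) = b - a + 1$, the length of the run, to $\maj(c.w) - \maj(w)$. The boundary case $a = 1$ is handled similarly: there is no prior descent at position $0$ to remove, but the gain at position $b$ still equals $b$, again the length of the run.

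Summing over all maximal runs of $1$'s in $w$ yields a total change of $\beta_1$, completing the proof. The main content is careful bookkeeping of the four cases at run boundaries; the only subtle point is the edge case where a run begins at position $1$, and the hypothesis $w_n \neq 1$ is precisely what prevents a run from ending at position $n$, which would fail to create the compensating new descent in $c.w$ and break the count.
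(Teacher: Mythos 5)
Your argument is correct and is essentially the paper's proof: the paper's one-line justification is that $c$ moves the descent just before each maximal run of $1$'s to the end of that run, which is exactly your run-by-run bookkeeping (each run contributing its length, summing to $\beta_1$). Your write-up just makes explicit the edge case of a run beginning at position $1$ and the role of the hypothesis $w_n \neq 1$, both of which are handled correctly.
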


\begin{proof}
The map $c$ moves every descent occurring just before a maximal contiguous run of $1$'s in $w$ to the position
at the end of this run.  
\end{proof}

Lemma~\ref{cycle-commute-permutations} generalizes to the setting of repeated letters as follows.

\begin{lemma}
\label{cycle-commute-words}
Let $\alpha = (\alpha_1, \dots, \alpha_k) \models n$ be a composition and let
$\beta = (\beta_1, \dots, \beta_m) \models_0 n$ be a weak composition.  Assume that $\alpha_k = 1$.  We have that
$w(c.\mu) = c.w(\mu)$ for any $\mu \in \OP_{\beta,\alpha}$.
\end{lemma}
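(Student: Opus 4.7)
The approach is to mirror the proof of Lemma~\ref{cycle-commute-permutations} in the multiset setting, with the extra care needed stemming from the fact that a single letter may now appear in several blocks. Writing $\mu = (B_1 \mid \cdots \mid B_k)$ with sorted elements $j_1^{(i)} < \cdots < j_{\alpha_i}^{(i)}$ in block $B_i$, and writing $w(\mu) = w[1] \cdot \ldots \cdot w[k]$ and $w(c.\mu) = w'[1] \cdot \ldots \cdot w'[k]$, the aim is to prove $w'[i] = c.w[i]$ for every $i$ by downward induction on $i$.

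The base case $i = k$ is immediate: since $\alpha_k = 1$, the block $B_k$ is a singleton $\{j\}$ with $w[k] = j$, and the corresponding singleton block of $c.\mu$ is $\{c.j\}$, so $w'[k] = c.j = c.w[k]$. For the inductive step, assume $w'[i+1] = c.w[i+1]$, which forces the first letter of $w'[i+1]$ to be $c.r$, where $r$ is the first letter of $w[i+1]$. Let $m'$ denote the maximum index with $j_{m'}^{(i)} \leq r$ (so that Definition~\ref{segment-pi-map} builds $w[i]$ by starting the sorted listing of $B_i$ at position $m' + 1$ and wrapping around), and let $m''$ denote the analogous maximum index for the sorted listing of $c.B_i$ against the threshold $c.r$.

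A four-way case analysis is then required, depending on whether $1 \in B_i$ and whether $r = 1$. If $1 \notin B_i$, then the cyclic decrement preserves the sorted order of $B_i$, and one finds $m'' = m'$ when $r \neq 1$, while $m'' = \alpha_i$ when $r = 1$ (since in that case $c.r = m$ exceeds every element of $c.B_i$). If instead $1 \in B_i$, then under $c$ the letter $1$ migrates to become the new maximum $m$ at the end of the sorted listing of $c.B_i$, and one obtains $m'' = m' - 1$ when $r \neq 1$ (with a further subdivision into $m' < \alpha_i$ versus $m' = \alpha_i$) and $m'' = \alpha_i$ when $r = 1$. In each case a direct comparison verifies that the rearrangement of $c.B_i$ dictated by $m''$ coincides, letter by letter, with $c$ applied to the rearrangement of $B_i$ dictated by $m'$, yielding $w'[i] = c.w[i]$.

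The main obstacle lies in the subcase $1 \in B_i$ with $r \neq 1$: here the index drop $m'' = m' - 1$ precisely compensates for the fact that the letter $1$ in $B_i$ has moved from the front of the sorted listing to the back under $c$, and one must check that this compensation reproduces exactly the letterwise decrement of the original segment $w[i]$. Unlike the permutation setting of Lemma~\ref{cycle-commute-permutations}, both $r = 1$ and $1 \in B_i$ can simultaneously occur via several copies of the letter $1$ distributed across distinct blocks, so the inductive hypothesis must be propagated cleanly through arbitrarily many such events rather than through a single distinguished index $i_0$.
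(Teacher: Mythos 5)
Your proposal is correct and is essentially the paper's argument: the paper simply says the proof is "similar to" that of Lemma~\ref{cycle-commute-permutations}, i.e.\ a segment-by-segment comparison of $w(c.\mu)$ with $c.w(\mu)$ via the recursive Definition~\ref{segment-pi-map}, using $\alpha_k=1$ for the last segment. Your downward induction with the case analysis on whether $1\in B_i$ and whether $r=1$ is exactly the adaptation the paper leaves to the reader, correctly replacing the single distinguished block $B_{i_0}$ of the permutation case by the possibly several blocks containing a copy of $1$.
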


\begin{proof}
Similar to the proof of Lemma~\ref{cycle-commute-permutations}.  Use Lemma~\ref{minimaj-alternative-word} instead of 
Lemma~\ref{minimaj-alternative}.
\end{proof}

Since removing letters from an ordered multiset partition gives an ordered multiset partition (provided that none of the blocks
are emptied), there is no need to introduce standardization in the analog of Lemma~\ref{compress} for 
ordered multiset partitions.

\begin{lemma}
\label{compress-word}
Let $(B_1 | \dots | B_{k-1} | B_k)$ be an ordered multiset partition.  We have 
\begin{equation}
\minimaj(B_1 | \dots | B_{k-1} | B_k) = \minimaj(B_1 | \dots | B_{k-1} | \min(B_k)).
\end{equation}
\end{lemma}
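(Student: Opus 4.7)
The plan is to mimic the proof of Lemma~\ref{compress}, using Lemma~\ref{minimaj-alternative-word} in place of Lemma~\ref{minimaj-alternative}. By Lemma~\ref{minimaj-alternative-word}, it suffices to show that the segmented words $w(B_1 | \dots | B_{k-1} | B_k)$ and $w(B_1 | \dots | B_{k-1} | \min(B_k))$ have the same major index (regarded as ordinary words, ignoring dots).

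First, I would unpack the recursive construction of the $w$ map. By Definition~\ref{segment-pi-map}, the terminal segment of $w(B_1 | \dots | B_{k-1} | B_k)$ is the increasing listing of $B_k$; in particular its first letter is $\min(B_k)$ and the segment itself contains no descents. The earlier segments $w[k-1], w[k-2], \dots, w[1]$ are built by a recursion that depends only on the first letter of the next segment. Since $\min(B_k)$ is also the unique letter of the terminal segment of $w(B_1 | \dots | B_{k-1} | \min(B_k))$, the two segmented words agree in every segment except possibly the last, and the second one is obtained from the first by truncating the terminal segment to its leading letter $\min(B_k)$.

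Next I would observe that this truncation deletes only non-descent positions: the terminal segment is increasing, and the possible descent at the boundary between $w[k-1]$ and $w[k]$ is determined by the last letter of $w[k-1]$ and by $\min(B_k)$, both of which are preserved. Hence the descent sets of the two underlying words coincide, giving equal major index, and applying Lemma~\ref{minimaj-alternative-word} to each side concludes the proof.

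There is no real obstacle; the argument is a direct transcription of the set-partition version once one checks (as above) that the truncation is confined to the strictly increasing tail of $w[k]$, so the boundary descent with $w[k-1]$ is untouched. The presence of repeated letters in earlier blocks plays no role, because the recursive construction of $w[1], \dots, w[k-1]$ only reads the first letter of $w[k]$.
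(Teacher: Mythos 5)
Your proposal is correct and follows essentially the same route as the paper, which proves this lemma exactly as you do: $w$ of the compressed partition agrees with $w(\mu)$ except that the terminal segment (which is increasing, hence descent-free) is truncated to its first letter $\min(B_k)$, and then Lemma~\ref{minimaj-alternative-word} is applied. Your additional checks (that the recursion for the earlier segments only reads the first letter of the next segment, and that the boundary descent with $w[k-1]$ is unaffected) are the same observations implicit in the paper's argument.
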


\begin{proof}
Similar to the proof of Lemma~\ref{compress}.  
Use Lemma~\ref{minimaj-alternative-word} instead of Lemma~\ref{minimaj-alternative}.
\end{proof}

In light of Lemma~\ref{compress-word}, it is enough to state a recursion for $M_{\beta,\alpha}(q)$ when the last 
part of $\alpha$ equals $1$.

\begin{lemma}
\label{basic-minimaj-recursion-word}
Let $\alpha = (\alpha_1, \dots, \alpha_k) \models n$ be a composition with $\alpha_k = 1$ and let
$\beta = (\beta_1, \dots, \beta_m) \models_0 n$ be a weak composition.  We have
\begin{equation}
M_{\beta, \alpha}(q) = \sum_{\beta_i > 0} q^{\beta_{i+1} + \beta_{i+2} + \cdots + \beta_m} 
M_{(\beta_{i+1}, \dots, \beta_m, \beta_1, \dots, \beta_i - 1), (\alpha_1, \dots, \alpha_{k-1})}(q).
\end{equation}
\end{lemma}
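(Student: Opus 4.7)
Because $\alpha_k = 1$, every $\mu \in \OP_{\beta, \alpha}$ decomposes uniquely as
$(B_1 \mid \cdots \mid B_{k-1} \mid \{i\})$ for some $i$ with $\beta_i > 0$. The plan is to
partition $\OP_{\beta, \alpha}$ according to this terminal value $i$ and, for each such $i$, construct a
bijection
\begin{equation*}
\Phi_i : \{\mu \in \OP_{\beta, \alpha} : \text{last block } = \{i\}\} \longrightarrow
\OP_{(\beta_{i+1},\ldots,\beta_m,\beta_1,\ldots,\beta_i - 1),\,(\alpha_1,\ldots,\alpha_{k-1})}
\end{equation*}
satisfying $\minimaj(\mu) = \minimaj(\Phi_i(\mu)) + (\beta_{i+1}+\cdots+\beta_m)$. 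Summing
$q^{\minimaj(\mu)}$ then yields the stated recursion directly.

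I take $\Phi_i(\mu)$ to be the first $k-1$ blocks of $c^i.\mu$; that this lands in the asserted set
(with inverse given by appending $\{m\}$ and applying $c^{-i}$) follows because $c$ shifts weights
cyclically and sends $i$ to $m$. The first ingredient in the $\minimaj$ computation is a direct check
from Definition~\ref{segment-pi-map} that, for any ordered multiset partition $\tau$, appending the
singleton $\{m\}$ gives $w(\tau \mid \{m\}) = w(\tau) \cdot m$ (the maximum first letter of the new
terminal segment forces no rotation of the previous segment) and creates no new descent at the
boundary; hence by Lemma~\ref{minimaj-alternative-word}, $\minimaj(\Phi_i(\mu)) = \minimaj(c^i.\mu)$.

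Using Lemma~\ref{cycle-commute-words} to commute $w$ past $c^s$, it remains to compute
$\maj(c^i.w(\mu)) - \maj(w(\mu))$. The terminal letter of $c^s.w(\mu)$ equals $i - s$ for
$0 \leq s \leq i-1$, so each of the first $i-1$ steps falls under Lemma~\ref{cycle-major-words} and
contributes the current first weight component to $\maj$; these telescope to
$\beta_1 + \beta_2 + \cdots + \beta_{i-1}$. The main obstacle is the $i$-th step, where the current
word ends in $1$ and Lemma~\ref{cycle-major-words} does not apply. The key supplementary identity is
\begin{equation*}
\maj(c.w) - \maj(w) = \beta_1^{(w)} - n \qquad \text{whenever } w_n = 1,
\end{equation*}
where $\beta_1^{(w)}$ is the number of $1$'s in $w$ and $n = |w|$. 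I would prove this by direct
descent analysis: for each maximal run of $1$'s of length $\ell$ in $w$, the adjacent descent migrates
from just before the run to just after the run (the $1$'s become $m$'s, the maximum letter),
contributing $+\ell$ to the change; the trailing $1$-run of length $L$, however, loses its preceding
descent without gaining a new one, contributing $-(n - L)$. Summing over all $1$-runs collapses to
$\beta_1^{(w)} - n$.

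Applying this identity at the $i$-th step, where the weight has been cyclically shifted so that its
first component is $\beta_i$, contributes $\beta_i - n$. The grand total is
\begin{equation*}
\minimaj(c^i.\mu) - \minimaj(\mu) = (\beta_1 + \cdots + \beta_{i-1}) + (\beta_i - n) = -(\beta_{i+1} + \cdots + \beta_m),
\end{equation*}
so $\Phi_i$ has the required $\minimaj$-shift. Grouping $q^{\minimaj(\mu)}$ by the terminal singleton
and applying $\Phi_i$ within each group produces the lemma's recursion.
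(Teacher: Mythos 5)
Your proof is correct, and its skeleton is the same as the paper's: decompose $\OP_{\beta,\alpha}$ by the terminal singleton, use $w(\tau\,|\,m)=w(\tau)\cdot m$ together with Lemma~\ref{minimaj-alternative-word} to strip a largest-letter singleton at no cost, and use the cycle $c$ with Lemmas~\ref{cycle-commute-words} and~\ref{cycle-major-words} to track $\minimaj$. The one genuine difference is the direction of the cycling. The paper parametrizes the terms by a partition $\mu'$ of the shifted weight, forms $(c^{-i}.\mu'\,|\,m)$, and applies $c^{i}$ to reach $(\mu'\,|\,m-i)$; along that path the terminal letter runs through $m,m-1,\dots,m-i+1$, so the hypothesis $w_n\neq 1$ of Lemma~\ref{cycle-major-words} holds at every step and no further identity is needed. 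You instead start from $\mu$ ending in $\{i\}$ and apply $c^{i}$ to reach a partition ending in $\{m\}$, so your last step wraps past the letter $1$, which forces you to prove the complementary identity $\maj(c.w)=\maj(w)+\beta_1-n$ when $w_n=1$; your descent-migration argument for it is sound (note only that a $1$-run at the very start of the word gains a descent at its end rather than having one migrate, but the contribution is still $+\ell$, so the count is unaffected). What each route buys: the paper's direction is leaner, needing only the stated lemmas; your direction packages the recursion as an explicit $\minimaj$-shifting bijection $\Phi_i$ defined directly on the fiber being counted, with inverse ``append $\{m\}$, apply $c^{-i}$,'' which makes the exponent $\beta_{i+1}+\cdots+\beta_m$ and the cycled weight drop out transparently (indeed your bookkeeping matches the lemma exactly, whereas the paper's intermediate display contains a sign/index slip). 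Either way the generating-function identity follows by summing over the terminal letter.
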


\begin{proof}
Without loss of generality, we may assume that $\beta \models n$ is in fact a strict composition.  By construction, we have that
any ordered multiset partition of the form $(\mu | m) \in \OP_{\beta, \alpha}$ satisfies
$w(\mu | m) = w(\mu) \cdot m$, so that Lemma~\ref{minimaj-alternative-word} gives
\begin{equation}
\minimaj(\mu | m) = \minimaj(\mu).
\end{equation}
On the other hand, for $1 \leq i \leq m-1$, Lemma~\ref{cycle-commute-words} and 
Lemma~\ref{cycle-major-words} show that
for a typical $\mu' \in \OP_{(\beta_1, \dots, \beta_{m-i} - 1, \dots, \beta_m), (\alpha_1, \dots, \alpha_{k-1})}$, we have

\begin{align}
\minimaj(\mu' | m-i) &= \minimaj(c^{i}.( c^{-i}.\mu' | m))  \\
&= \minimaj(c^{-i}.\mu') - \beta_{m-i} - \cdots - \beta_m.
\end{align}

The corresponding identity of generating functions follows.
\end{proof}

Observe that the weight composition $\beta$ gets cycled in 
the recursion of Lemma~\ref{basic-minimaj-recursion-word},
but that no such cycling appears in Lemma~\ref{inv-recursion-word}.
This is irrelevant in the permutation case $\beta = (1^n)$, but in the case of words
we get different behavior of $\inv$ and $\minimaj$ on $\OP_{\beta,\alpha}$.

\subsection{Switch Maps} 
In order to prove that $\inv$ and $\minimaj$ are equidistributed on $\OP_{\beta, k}$, we will
show first that $\minimaj$ has the same distribution on $\OP_{\beta, k}$ and $\OP_{\beta', k}$
whenever $\beta'$ is a rearrangement of $\beta$.  Equivalently, we will show that the 
quasisymmetric function $\Val_{n,k}(x;0,q)$ is symmetric in the $x$ variables.
We will do this using a family of involutions on ordered multiset partitions 
called {\em switch maps}.  As the general definition of switch maps is a bit involved,
let us first examine the special case of switch maps defined on words.

Let 
$\beta = (\beta_1, \beta_2, \dots, \beta_m) \models_0 n$ and let $\beta' \models_0 n$ be a weak 
composition whose entries are a rearrangement of those of $\beta$.
Our goal is to give a combinatorial proof that $\maj$ has the same distribution on the set of words
$W_{\beta}$ as on the set of words $W_{\beta'}$.

It is enough to consider the case $\beta' = s_i.\beta := (\beta_1, \dots, \beta_{i+1}, \beta_i, \dots, \beta_m)$
for some $1 \leq i \leq m-1$.  We define a map $t_i: W_{\beta} \rightarrow W_{s_i.\beta}$ as follows.
Let $w = w_1 \dots w_n \in W_{\beta}$.
Underline every consecutive pair of letters $(i+1)i$ which appears in $w$.  Now overline all maximal
runs of not-underlined letters which are of the form $i^a (i+1)^b$ for some $a, b \geq 0$.  
Let $t_i(w)$ be the word obtained by replacing every overlined run $i^a (i+1)^b$ with the run
$i^b (i+1)^a$.
For example, we have
\begin{equation*}
t_3(122\underline{43}\overline{34}2\overline{3}22\overline{4}1\overline{344}) = 
122\underline{43}\overline{34}2\overline{4}22\overline{3}1\overline{334}.
\end{equation*}

\begin{proposition}
\label{bender-knuth-words}
Let $\beta = (\beta_1, \dots, \beta_m) \models_0 n$.  For $1 \leq i \leq m-1$, the map 
$t_i: W_{\beta} \rightarrow W_{s_i.\beta}$ is bijective and preserves descent sets, and hence major index.
\end{proposition}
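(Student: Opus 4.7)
The plan is to recognize $t_i$ as a Bender-Knuth-type involution on words, and to verify in turn that (i) $t_i$ lands in $W_{s_i.\beta}$, (ii) $t_i$ is an involution (hence a bijection), and (iii) $t_i$ preserves the descent set (hence $\maj$).

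For (i), each underlined $(i+1)i$ pair contributes one $i$ and one $i+1$, while each overlined run $i^a (i+1)^b$ becomes $i^b (i+1)^a$; summing over all pairs and runs, the total count of $i$'s in $t_i(w)$ equals the original count of $(i+1)$'s and vice versa, while all other letter counts are untouched, giving weight $s_i.\beta$. For (ii), I would observe that every position of $w$ lies in exactly one of three regions: inside an underlined pair, inside an overlined run, or in neither. The underlined pairs are not moved or rewritten, and the transformed run $i^b (i+1)^a$ is weakly increasing, hence contains no internal $(i+1)i$ pair; the same boundary analysis used in (iii) rules out newly underlined pairs at the run's endpoints. Consequently the set of underlined pairs and the positions of overlined runs in $t_i(w)$ agree with those in $w$, so $t_i^2 = \mathrm{id}$.

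For (iii), I would partition descents by region. Internal descents within a run are impossible since both $i^a (i+1)^b$ and $i^b (i+1)^a$ are weakly increasing; descents at underlined pairs are present in both $w$ and $t_i(w)$; descents at positions with letters unchanged by $t_i$ are trivially preserved. The main obstacle is the descent analysis at run boundaries $(X, w_{j_1})$ and $(w_{j_2}, Y)$, where $w_{j_1}$ or $w_{j_2}$ can flip between $i$ and $i+1$ under $t_i$ when $a$ or $b$ is zero. The crucial lemma I would prove is that at a left boundary the external letter $X$ cannot equal $i+1$, and at a right boundary the external letter $Y$ cannot equal $i$: in one subcase the forbidden value would force the adjacent run endpoint into an underlined $(i+1)i$ pair, contradicting its membership in the not-underlined run, and in the other subcase the forbidden value would be neither underlined nor disqualified by value, contradicting maximality of the run. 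With these exclusions in hand, a three-case check on $(a > 0, b > 0)$, $(a > 0, b = 0)$, and $(a = 0, b > 0)$ verifies $X > w_{j_1}$ in $w$ if and only if $X > w_{j_1}$ in $t_i(w)$, and similarly at the right boundary, giving $\Des(t_i(w)) = \Des(w)$ and hence $\maj(t_i(w)) = \maj(w)$.
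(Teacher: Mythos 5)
Your proposal is correct and follows the same route as the paper, whose proof simply asserts that the $t_i$ are involutions (hence bijections) and preserve descent sets by construction; you supply the routine verification of exactly these two facts via the boundary analysis at the ends of the overlined runs. The key exclusions you isolate (the letter before a run cannot be $i+1$, the letter after cannot be $i$) are precisely what makes the paper's "by construction" claim go through.
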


\begin{proof}
The $t_i$ are involutions, and so bijections. By construction, the  
$t_i$ preserve descent sets.
\end{proof}

We conclude that $\maj$ has the same distribution on $W_{\beta}$ and $W_{\beta'}$ whenever
$\beta'$ is a rearrangement of $\beta$.  The operators $\{t_i \,:\, 1 \leq i \leq m-1\}$ satisfy the relations:
\begin{equation}
\label{switch-relations}
\begin{cases}
t_i^2 = 1 & \text{for all $1 \leq i \leq m-1$,} \\
t_i t_j = t_j t_i & \text{for $|i - j| > 1$.}
\end{cases}
\end{equation}

The main task of this subsection is to generalize the $t_i$ maps from words to ordered multiset partitions.
In particular, for any weak composition $\beta = (\beta_1, \dots, \beta_m) \models_0 n$, any
$1 \leq i \leq m-1$, and any 
$k > 0$, we will define a {\em switch map}
$t_i: \OP_{\beta, k} \rightarrow \OP_{s_i.\beta, k}$.  The switch maps
will satisfy the relations of Equation~\ref{switch-relations}.
In particular, the relation $t_i^2 = 1$ will force the switch maps to be bijective.
Moreover, we will have that 
$\Des(w(t_i(\mu))) = \Des(w(\mu))$ for any ordered multiset partition $\mu$.
Lemma~\ref{minimaj-alternative-word}  therefore implies that
$\minimaj(t_i(\mu)) = \minimaj(\mu)$.
In the all singletons case $\OP_{\beta, n}$, the switch map $t_i$ will just be the map defined above.  
However, in the general case the switch map $t_i$ will not preserve shape
(and it {\bf cannot}; see the example following Corollary~\ref{valley-symmetric} below).

In order to define the $t_i$ operators on ordered multiset partitions, 
we need to carefully consider the relative positions of the letters $i$ and $i+1$ in
segmented words lying in the image of the map
$w: \OP_{\beta, k} \rightarrow \{k-$segmented words$\}$.
The following trichotomy will allow us to fruitfully generalize the notion of a contiguous pair of letters 
of the form $(i+1)i$.

\begin{lemma}
\label{trichotomy}
Let $\alpha = (\alpha_1, \dots, \alpha_k) \models n$ be a composition and let $\mu = (B_1 | \dots | B_k)$ be an ordered
multiset partition of shape $\alpha$.  Fix a letter $i$ and consider the $k$-segmented word
$w(\mu) = w[1] \cdot \ldots \cdot w[k]$.  For each $1 \leq \ell \leq k$, exactly one of the following statements
holds for the segment $w[\ell]$.
\begin{enumerate}
\item  The segment $w[\ell]$ contains at most one letter from the set $\{i, i+1\}$.
\item  The letters $i$ and $i+1$ both appear in $w[\ell]$,  are adjacent, and occur in the order $i(i+1)$.
\item  The letters $i$ and $i+1$ both appear in $w[\ell]$, the letter $i+1$ is the first letter of $w[\ell]$, and
the letter $i$ is the last letter of $\ell$.  Moreover, we have that $\ell < k$ and the first letter of $w[\ell+1]$ is $i$.
\end{enumerate}
\end{lemma}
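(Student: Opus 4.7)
The plan is to unpack Definition~\ref{segment-pi-map} carefully and perform a case analysis on how many of the letters $i,i+1$ appear in the block $B_\ell$. Since $B_\ell$ is a set, it contains at most one copy of $i$ and at most one copy of $i+1$. If $B_\ell$ contains at most one of $\{i, i+1\}$, then so does $w[\ell]$ (as $w[\ell]$ is just a rotation of the increasing listing of $B_\ell$), and we land in case (1). So the substantive work is to analyze what happens when both $i$ and $i+1$ lie in $B_\ell$, and to show that exactly one of (2) and (3) occurs.

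First I would handle the terminal segment $\ell = k$ separately: by definition $w[k]$ is the increasing listing of $B_k$, so if both letters are present they appear as an adjacent pair $i(i+1)$, which is case (2). For $\ell < k$, let $B_\ell = \{j_1^{(\ell)} < \cdots < j_{\alpha_\ell}^{(\ell)}\}$ and let $r$ denote the first letter of $w[\ell+1]$; by Definition~\ref{segment-pi-map},
\[
w[\ell] = j_{m+1}^{(\ell)} \cdots j_{\alpha_\ell}^{(\ell)} j_1^{(\ell)} \cdots j_m^{(\ell)},
\]
where $m$ is maximal with $j_m^{(\ell)} \leq r$. Suppose $i$ and $i+1$ both lie in $B_\ell$, so they appear as some consecutive pair $j_p^{(\ell)} = i$, $j_{p+1}^{(\ell)} = i+1$ in the increasing listing. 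The key observation is then that the rotation separates $i$ from $i+1$ if and only if $m = p$, i.e.\ if and only if $j_m^{(\ell)} = i$ and $j_{m+1}^{(\ell)} = i+1$; equivalently, $i \leq r$ and $i+1 > r$, which forces $r = i$.

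From there the two subcases fall out immediately. If $m \neq p$, the rotation keeps the pair $i(i+1)$ intact and adjacent, giving case (2). If $m = p$, then $w[\ell]$ begins with $j_{p+1}^{(\ell)} = i+1$ and ends with $j_p^{(\ell)} = i$; and we have already shown that in this situation the first letter of $w[\ell+1]$ must be $r = i$, which gives all the assertions of case (3) (including $\ell < k$, which is automatic since $w[\ell+1]$ exists). This exhausts all possibilities and verifies mutual exclusivity, completing the trichotomy.

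The only delicate step is the equivalence ``rotation splits $i$ from $i+1$ iff $r = i$''; the rest is bookkeeping from the recursive definition. I do not expect any genuine obstacle beyond keeping the indexing in Definition~\ref{segment-pi-map} straight.
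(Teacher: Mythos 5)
Your proposal is correct and follows essentially the same route as the paper: work from the terminal segment leftward using the recursive description of $w[\ell]$ as a rotation of the increasing listing of $B_\ell$ determined by the first letter $r$ of $w[\ell+1]$, and observe that the adjacent pair $i(i+1)$ is split by the rotation precisely when $r=i$, which yields case (3) and otherwise case (2). Your explicit equivalence ``$m=p$ iff $r=i$'' is just a more detailed phrasing of the paper's statement that $i(i+1)$ remains contiguous unless $r=i$.
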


\begin{proof}
If $\ell = k$, the segment $w[k]$ consists of the letters in $B_k$ written in increasing order, 
and the claimed trichotomy holds.
Assume that $1 \leq \ell \leq k-1$ and this trichotomy
 holds for the segment $w[\ell+1]$.  Let $r$ be the first letter of $w[\ell+1]$
and write $B_{\ell} = \{j^{(\ell)}_1 < \dots < j^{(\ell)}_{\alpha_{\ell}} \}$. 
 By the definition of the map $w$, the segment $w[\ell]$ is
$j^{(\ell)}_{m+1} \dots j^{(\ell)}_{\alpha_{\ell}} j^{(\ell)}_1 \dots j^{(\ell)}_m$, where $m$ is maximal such that 
$j^{(\ell)}_m \leq r$.  

If $i, i+1 \in B_{\ell}$, we  have that $i (i+1)$ is a contiguous subword 
of  $j^{(\ell)}_{m+1} \dots j^{(\ell)}_{\alpha_{\ell}} j^{(\ell)}_1 \dots j^{(\ell)}_m$ unless $r = i$.
If $r = i$ and $i, i+1 \in B_{\ell}$, we get that $j^{(\ell)}_{m+1} = i+1$ and $j^{(\ell)}_m = i$.  
We deduce that the desired trichotomy holds for the segment $w[\ell]$.
\end{proof}

In defining the $t_i$ operator on words, we froze all consecutive pairs of letters of the form
$(i+1)i$.  To extend the $t_i$ operators to ordered multiset partitions, we will need to expand the
collection of subwords that get frozen.  We capture this idea in the following definition of an
{\em $i$-drop}.  The reader should compare this definition to Lemma~\ref{trichotomy}.

\begin{defn}
\label{i-drop-def}
{\em ($i$-drops)}
Let $w = w[1] \cdot \ldots \cdot w[k]$ be a $k$-segmented word and let $i$ be a letter.  An
{\em $i$-drop} in $w$ is a contiguous subword of the form
$(i+1) j_1 \dots j_{\ell} i$, where
\begin{itemize}
\item  none of the letters $j_1, \dots, j_{\ell}$ are equal to $i$ or $i+1$, and
\item either $(i+1) j_1 \dots j_{\ell} i$ or $(i+1) j_1 \dots j_{\ell}$ is a segment of $w[1] \cdot \ldots \cdot w[k]$.
\end{itemize}
\end{defn}

We will consider $i$-drops in $k$-segmented 
words $w[1] \cdot \ldots \cdot w[k]$ arising from ordered multiset partitions.
For example, the $11$-segmented word  which is the image under $w$ of the ordered multiset partition
$(1237|34|4|3|3|4|3467|3|3|457|356)$ is shown below with all $3$-drops underlined:
\begin{equation*}
7123 \cdot 34 \cdot \underline{4 \cdot 3} \cdot 3 \cdot 4 \cdot \underline{4673} \cdot 3 \cdot 3 \cdot \underline{457  \cdot 3}56.
\end{equation*}
If $k = n$ is the length of $w$, we recover the classical notion of a contiguous pair $(i+1)i$ in a word $w$.
We next describe how to underline and overline the segmented word $w(\mu)$ arising from an
ordered multiset partition $\mu$.

\begin{defn}
\label{word-decoration}
{\em (Word Decoration)}
Let $\beta = (\beta_1, \dots, \beta_m) \models_0 n$, let $1 \leq i \leq m-1$, and let $k > 0$.  

Let $\mu \in \OP_{\beta,k}$ and write $w(\mu) = w[1] \cdot \ldots \cdot w[k]$.  The 
{\em $i$-decorated $k$-segmented word} corresponding to $\mu$ is obtained 
from $w[1] \cdot \ldots \cdot w[k]$ as follows.
\begin{itemize}
\item  Underline every $i$-drop in $w(\mu)$, and then
\item  overline every maximal contiguous subword of $w(\mu)$ consisting entirely of $i$'s and $(i+1)$'s which are not underlined.
\end{itemize}
\end{defn}

Continuing our running example, the $3$-decorated $11$-segmented word corresponding to the ordered 
multiset partition
$(1237|34|4|3|3|4|3467|3|3|457|356)$ is
\begin{equation*}
712\overline{3 \cdot 34} \cdot \underline{4 \cdot 3} \cdot \overline{3 \cdot 4} 
\cdot \underline{4673} \cdot \overline{3 \cdot 3} \cdot \underline{457  \cdot 3}56.
\end{equation*}
In general, the overlined subwords in decorated words all have the same form.

\begin{lemma}
\label{overlined-form}
Every overlined subword in an 
$i$-decorated $k$-segmented word has the form $i^a (i+1)^b$ for some $a, b \geq 0$. 
\end{lemma}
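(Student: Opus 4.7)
The plan is to prove the contrapositive at the word level: an overlined subword is a maximal run of non-underlined letters drawn from $\{i, i+1\}$, so it suffices to show that whenever the letter $i+1$ is immediately followed by the letter $i$ somewhere in $w(\mu)$, that pair already lies inside an $i$-drop and is therefore underlined by Definition~\ref{word-decoration}. Granting this, a non-underlined binary word over $\{i, i+1\}$ contains no adjacent factor $(i+1)\,i$, which forces it into the form $i^a (i+1)^b$.

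To prove this adjacency claim, I would split on whether the adjacent pair $(i+1)\,i$ lies entirely inside one segment $w[\ell]$ or straddles a segment boundary. In the interior case, Lemma~\ref{trichotomy} rules out Cases~(1) and~(2), so $w[\ell]$ must fall into Case~(3); but in a Case~(3) segment, the $i+1$ and the $i$ are at the two ends, so they can be adjacent only if the segment has length exactly two, in which case the segment itself equals $(i+1)\,i$ --- an $i$-drop of the first kind. In the boundary case, with $i+1$ ending $w[\ell]$ and $i$ starting $w[\ell+1]$, I would unfold the rotation formula in Definition~\ref{segment-pi-map}: writing $B_\ell = \{j_1 < \cdots < j_{\alpha_\ell}\}$ and letting $r$ be the first letter of $w[\ell+1]$, the last letter of $w[\ell]$ is $j_m$ (with $j_m \leq r$) when $m \geq 1$, or $j_{\alpha_\ell} = \max(B_\ell)$ when $m = 0$. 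The first possibility gives $i+1 = j_m \leq r = i$, which is absurd, so $m = 0$; the second possibility then forces $\min(B_\ell) > i$ and $\max(B_\ell) = i+1$, hence $B_\ell = \{i+1\}$. Thus $(i+1)$ is a one-letter segment and $(i+1)\,i$ qualifies as an $i$-drop of the second kind in Definition~\ref{i-drop-def}.

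The main obstacle I anticipate is the boundary case: one must carefully unwind the recursive construction of $w$ to see that, apart from the degenerate singleton configuration, a segment ending in $i+1$ cannot be directly followed by a segment beginning with $i$. Once that is in hand, the rest is immediate --- every letter of an $i$-drop is underlined, so an overlined run never witnesses the factor $(i+1)\,i$, and any binary string over $\{i, i+1\}$ avoiding $(i+1)\,i$ places all occurrences of $i$ before all occurrences of $i+1$.
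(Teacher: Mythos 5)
Your proof is correct and follows essentially the same route as the paper's: rule out an adjacent factor $(i+1)\,i$ inside an overlined run by splitting into the within-segment case (where Lemma~\ref{trichotomy} forces the segment to be exactly $(i+1)\,i$, an $i$-drop) and the segment-boundary case (where the definition of $w$ forces the segment ending in $i+1$ to be the singleton $\{i+1\}$, again yielding an $i$-drop). The only difference is presentational---you phrase it as a contrapositive and unwind the rotation formula explicitly where the paper simply cites the definition of $w$---so nothing further is needed.
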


\begin{proof}
If not, then we can find an overlined subword with a contiguous pair of letters equal to $(i+1)i$.  If these letters are in the same segment 
of $w(\mu) = w[1] \cdot \ldots \cdot w[k]$, 
by Lemma~\ref{trichotomy} this segment must equal $(i+1)i$,
which would make 
$(i+1)i$ an (underlined) $i$-drop, a contradiction. 
 The letters $(i+1)$ and $i$ are therefore in different segments, forcing $i$ to be the first letter of its segment and
 $(i+1)$ to be the last letter of its segment.
In turn, the definition of $w$ forces the segment of $w(\mu)$ containing $i+1$ to have length 
$1$, so that $(i+1)i$ is again an $i$-drop,
which is again a contradiction.
\end{proof} 

We define the ordered set partition $t_i(\mu)$ in terms of its $k$-segmented word 
$w(t_i(\mu))$.  The word $w'$ underlying the $k$-segmented word $w(t_i(\mu)) = w'[1] \cdot \ldots \cdot w'[k]$ is easy to describe:
for every overlined subword in the $i$-decorated $k$-segmented word corresponding to
$w(\mu)$, replace $i^a (i+1)^b$ with $i^b (i+1)^a$.  Obtaining the correct segmentation
$w'[1] \cdot \ldots \cdot w'[k]$ from the $k$-segmented word $w[1] \cdot \ldots \cdot w[k]$ is more involved and depends on the local structure
of $w(\mu)$ near each overlined subword.  

\begin{defn}
\label{switch-maps}
{\em (Switch Maps)}
Let $\beta = (\beta_1, \dots, \beta_m) \models_0 n$, let $1 \leq i \leq m-1$, and let $k > 0$.  We define 
an ordered set partition $t_i(\mu)$ as follows.

Let $\mu \in \OP_{\beta, k}$ be an ordered multiset partition.
Form the $i$-decorated $k$-segmented word $w(\mu) = w[1] \cdot \ldots \cdot w[k]$ corresponding to $\mu$.

We define a new $k$-segmented word $w' = w'[1] \cdot \ldots \cdot w'[k]$ from this decorated word by altering the letters and segmentation
in $w(\mu)$ near each overlined subword of $w(\mu)$ as follows.  Let $i^a (i+1)^b$ be a typical overlined subword of $w(\mu)$.  
\begin{itemize}
\item  {\bf Case 1:} {\em $a > 0$ and $b = 0$.}  If the overlined run $i^a$ is not immediately preceded by an $i$-drop in $w(\mu)$, then
trade every $i$ in this run for an $i+1$ and leave the segmentation at this run unchanged in $w'$.  If the overlined run $i^a$
is immediately preceded by an $i$-drop in $w(\mu)$, then the structure of $w(\mu)$ near this run must look like
\begin{equation*}
\cdot \underline{(i+1) j_1 \dots j_{\ell} i} \cdot \underbrace{ \overline{i \cdot  \ldots \cdot i} }_a \text{ or } 
\cdot \underline{(i+1) j_1 \dots j_{\ell} \cdot i} \cdot \underbrace{ \overline{i \cdot  \ldots \cdot i}}_a.
\end{equation*}
These expressions differ in a single dot before the leftmost $i$.

Trade these in $w'$ for
\begin{equation*}
\cdot (i+1) j_1 \dots j_{\ell}  \cdot i \underbrace{(i+1) \cdot  \ldots \cdot (i+1)}_a \text{ or } 
\cdot (i+1) j_1 \dots j_{\ell} \cdot i \cdot \underbrace{(i+1) \cdot  \ldots \cdot (i+1)}_a,
\end{equation*}
respectively.
\item {\bf Case 2:} {\em $a = 0$ and $b > 0$.}  If the overlined run $(i+1)^b$ is not immediately preceded by an $i$-drop in $w(\mu)$, then
trade every $i+1$ in this run for an $i$ and leave the segmentation of this run unchanged in $w'$.  If the overlined run $(i+1)^b$ is 
immediately preceded by an $i$-drop in $w(\mu)$, then the structure of $w(\mu)$ near this run must look like
\begin{equation*}
\cdot \underline{(i+1) j_1 \dots j_{\ell}  \cdot i} \underbrace{ \overline{(i+1) \cdot  \ldots \cdot (i+1)} }_b \text{ or } 
\cdot \underline{(i+1) j_1 \dots j_{\ell} \cdot i} \cdot \underbrace{ \overline{(i+1) \cdot  \ldots \cdot (i+1)}}_b.
\end{equation*}
Trade these in $w'$ for
\begin{equation*}
\cdot (i+1) j_1 \dots j_{\ell} i \cdot \underbrace{i \cdot  \ldots \cdot i}_b \text{ or } 
\cdot (i+1) j_1 \dots j_{\ell} \cdot i \cdot \underbrace{i \cdot  \ldots \cdot i}_b,
\end{equation*}
respectively.
\item  {\bf Case 3:} {\em $a > 0$ and $b > 0$.}  The structure of $w(\mu)$ near the run $i^a (i+1)^b$ must look like
\begin{equation*}
\underbrace{ \overline{i \cdot \ldots \cdot i} }_a \underbrace{ \overline{(i+1) \cdot \ldots \cdot (i+1)} }_b \text{ or }
\underbrace{ \overline{i \cdot \ldots \cdot i}}_a \cdot \underbrace{ \overline{(i+1) \cdot \ldots \cdot (i+1)}}_b.
\end{equation*}
Trade  these in $w'$ for 
\begin{equation*}
\underbrace{ i \cdot \ldots \cdot i}_b \underbrace{(i+1) \cdot \ldots \cdot (i+1)}_a \text{ or }
\underbrace{ i \cdot \ldots \cdot i}_b \cdot \underbrace{(i+1) \cdot \ldots \cdot (i+1)}_a
\end{equation*}
respectively.
\end{itemize}

After applying the above procedure to every overlined run in $w(\mu)$, we get a $k$-segmented word 
$w' = w'[1] \cdot \ldots \cdot w'[k]$.  Define $t_i(\mu) = (B'_1 | \dots | B'_k)$ to be the ordered multiset partition
obtained by letting the elements in the block $B'_{\ell}$ be the letters in the segment $w'[\ell]$.
\end{defn}

The operator $t_i$ is called a {\em switch map} on ordered multiset partitions.
As Definition~\ref{switch-maps} is elaborate, let us give an example of applying switch maps to ordered
multiset partitions.

\begin{example}
We let 
$\mu = (1237|34|4|3|3|4|3467|3|3|457|356)$ and $i = 3$.  We wish to compute 
$t_3(\mu)$.
To do this, we first compute the $3$-decorated $11$-segmented word corresponding to $\mu$ to get 
\begin{equation*}
w(\mu) = w[1] \cdot \ldots \cdot w[11] = 712\overline{3 \cdot 34} \cdot \underline{4 \cdot 3} \cdot \overline{3 \cdot 4} 
\cdot \underline{4673} \cdot \overline{3 \cdot 3} \cdot \underline{457  \cdot 3}56.
\end{equation*}
Now we alter the overlined subwords and the segmentation near them to get
$w' = w'[1] \cdot \ldots \cdot w'[11]$.  Reading off from 
Definition~\ref{switch-maps}, we get 
\begin{equation*}
w' = w'[1] \cdot \ldots \cdot w'[11] =
71234 \cdot 4 \cdot 4 \cdot 3 \cdot 3 \cdot 4
\cdot 467 \cdot 3 4 \cdot 4 \cdot 457  \cdot 356.
\end{equation*}
Replacing segments with blocks, we get
\begin{equation*}
t_3(1237|34|4|3|3|4|3467|3|3|457|356) = (12347 | 4 | 4 | 3 | 3 | 4 | 467 | 34 | 4 | 457 | 346).
\end{equation*}

Observe that we have
\begin{equation*}
w(12347 | 4 | 4 | 3 | 3 | 4 | 467 | 34 | 4 | 457 | 346) = 
71234 \cdot 4 \cdot 4 \cdot 3 \cdot 3 \cdot 4
\cdot 467 \cdot 3 4 \cdot 4 \cdot 457  \cdot 356.
\end{equation*}
That is, we have $w(t_3(\mu))$ is precisely the $11$-segmented word 
$w' = w'[1] \cdot \ldots \cdot w'[11]$
constructed in
Definition~\ref{switch-maps}.  Moreover, the $3$-decoration of $w(t_3(\mu))$ is given by
\begin{equation*}
712\overline{34 \cdot 4} \cdot \underline{4 \cdot 3} \cdot \overline{3 \cdot 4} 
\cdot \underline{467 \cdot 3} \overline{4 \cdot 4} \cdot \underline{457  \cdot 3}56.
\end{equation*}
In particular, the positions of the underlined and overlined portions of this word (ignoring segmentation)
are precisely the same as the corresponding positions in the $3$-decorated $11$-segmented word
$w(\mu) = w[1] \cdot \ldots \cdot w[11]$.  If we apply the transformations of Definition~\ref{switch-maps}
to $w(t_3(\mu))$ we get
\begin{equation*}
7123 \cdot 34 \cdot 4 \cdot 3 \cdot 3 \cdot 4
\cdot 4673 \cdot 3 \cdot 3 \cdot 457  \cdot 356.
\end{equation*}
This is precisely the $11$-segmented word $w(\mu)$.  In particular, we have that 
$t_3^2(\mu) = \mu$.
\end{example}

The key facts about switch maps are collected in the following proposition.

\begin{proposition}
\label{switch-well-defined}
Let $\beta = (\beta_1, \dots, \beta_m) \models_0 n$, let $1 \leq i \leq m-1$, and let $k > 0$.
Let $\mu \in \OP_{\beta, k}$.

\begin{enumerate}
\item
We have that $t_i(\mu)$ is an
ordered multiset partition in $\OP_{s_i.\beta, k}$.  
The action of $t_i$ therefore gives a map
\begin{equation}
t_i: \OP_{\beta, k} \longrightarrow \OP_{s_i.\beta,k}.
\end{equation}
\item
If $w' = w'[1] \cdot \ldots \cdot w'[k]$
is the $k$-segmented word constructed in Definition~\ref{switch-maps}, we have that
$w(t_i(\mu)) = w'[1] \cdot \ldots \cdot w'[k]$ as $k$-segmented words.  
\item
We have that $\Des(w(t_i(\mu))) = \Des(w(\mu))$.
\item
We have that $\minimaj(t_i(\mu)) = \minimaj(t_i(\mu))$.
\item
We have that $t_i^2(\mu) = \mu$.
\end{enumerate}
\end{proposition}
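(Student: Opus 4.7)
The plan is to prove all five claims by exploiting that the modification prescribed by Definition~\ref{switch-maps} is strictly local to each overlined subword of the $i$-decorated $k$-segmented word $w(\mu)$. Each claim then reduces to a case analysis on the three cases of Definition~\ref{switch-maps} together with the two subcases in Cases~1 and~2 distinguishing whether the overlined run is immediately preceded by an $i$-drop.

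First I would prove claim~(1). The weight of $t_i(\mu)$ is $s_i.\beta$ because every $i$-drop contributes exactly one $i$ and one $i+1$ and is left untouched, while each overlined run $i^a(i+1)^b$ is replaced by $i^b(i+1)^a$, which swaps the total counts of $i$ and $i+1$; all other letters are unchanged. The number of blocks of $t_i(\mu)$ equals $k$ because one checks in each of Cases~1,~2,~3 of Definition~\ref{switch-maps} that the count of segmentation dots is preserved. That each block of $t_i(\mu)$ is a set (no repeated letters) follows from Lemma~\ref{trichotomy} applied to $t_i(\mu)$: the placements of dots inside the new overlined runs $i^b(i+1)^a$ in Cases~1,~2,~3 are engineered to separate equal letters, so no segment of $w'$ contains a repeat.

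Next, for claim~(2) I would proceed by right-to-left induction on the segments, showing that applying the map $w$ of Definition~\ref{segment-pi-map} to the blocks of $t_i(\mu)$ reproduces exactly the segmented word $w'$ of Definition~\ref{switch-maps}. The key is that the first letter of $w'[\ell+1]$ governs the cyclic rotation of $B'_\ell$ prescribed by Definition~\ref{segment-pi-map}, and the elaborate subcase structure of Definition~\ref{switch-maps} is designed precisely so that this rotation matches the segmentation written in $w'$. Claim~(3) then follows by inspection of the descent set: any overlined subword $i^a(i+1)^b$ carries no internal descents, and neither does its replacement $i^b(i+1)^a$, so one only needs to check that the descents at the two boundaries of each overlined run are unaffected, which is done by reading off the flanking letters in each case. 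Claim~(4) is then immediate from Lemma~\ref{minimaj-alternative-word}, since $\minimaj(\nu) = \maj(w(\nu))$ and $\maj$ depends only on the descent set.

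For claim~(5), I would show that the $i$-decoration of $w(t_i(\mu))$ has $i$-drops and overlined runs occupying precisely the same absolute positions in the underlying plain word as those of $w(\mu)$, with each overlined run $i^a(i+1)^b$ replaced by $i^b(i+1)^a$; the local transformations of Definition~\ref{switch-maps} are then manifestly reversible at each overlined run, so $t_i^2(\mu) = \mu$. The main obstacle is claim~(2): because $w$ is defined by a right-to-left recursion whose value at segment $\ell$ depends on the first letter of $w(t_i(\mu))[\ell+1]$, verifying that the segmentation produced by Definition~\ref{switch-maps} agrees with this recursion requires a careful boundary-by-boundary verification. The subcases involving an $i$-drop preceding an overlined run are the subtle ones, because swapping $i^a(i+1)^b$ to $i^b(i+1)^a$ changes the first letter of what was the overlined run, and therefore alters the rotation of the adjacent block; the compensating dot-shifts built into Cases~1 and~2 of Definition~\ref{switch-maps} are exactly what restore agreement with $w$.
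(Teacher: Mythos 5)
Your overall plan mirrors the paper's proof: (2) by a right-to-left induction matching the recursion defining $w$, (3) by checking that the local replacements never touch descents, (4) from Lemma~\ref{minimaj-alternative-word}, and (5) by observing that the $i$-decoration of $w(t_i(\mu))$ occupies the same positions as that of $w(\mu)$ and that the branches of Definition~\ref{switch-maps} pair up (left branch of Case~1 with left branch of Case~2, etc.), so the two applications undo each other. Those parts are fine as a plan, though they defer all the case checking.

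There is, however, a genuine gap in your argument for claim (1), specifically for the assertion that no block of $t_i(\mu)$ has a repeated letter. You justify it by ``Lemma~\ref{trichotomy} applied to $t_i(\mu)$,'' but that is circular: Lemma~\ref{trichotomy} describes the segments of $w(\nu)$ for an ordered multiset partition $\nu$, and at this stage you have established neither that $t_i(\mu)$ is a legitimate ordered multiset partition of the same size nor that $w(t_i(\mu))$ equals the word $w'$ of Definition~\ref{switch-maps} (that is claim (2), which you prove later and which itself presupposes (1)). Moreover, the real danger is not ``inside the new overlined runs'': in Cases~1 and~2 without a preceding $i$-drop the segmentation is left completely unchanged and every letter of the run is relabeled, so no dot placement can help; the risk is that a segment of $w(\mu)$ containing the first or last letter of the overlined run already contains the other letter of $\{i,i+1\}$, in which case the relabeling would create a repeat at the boundary. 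The correct argument applies Lemma~\ref{trichotomy} and the maximality of overlined runs to the \emph{original} decorated word $w(\mu)$, case by case, to rule this out: e.g.\ for an overlined run $i^a$ not preceded by an $i$-drop, the segment containing its leftmost $i$ cannot contain $i+1$ (else that segment would be an underlined $i$-drop), and the segment containing its rightmost $i$ cannot contain $i+1$ (else trichotomy would force an adjacent $i(i+1)$, contradicting maximality of the run); similarly for the runs $(i+1)^b$ and for the branches preceded by an $i$-drop, where one also uses that the letters $j_1,\dots,j_\ell$ of the drop avoid $i$ and $i+1$. Without this boundary analysis on $w(\mu)$, claim (1) — and with it the inductive verification in claim (2) — does not go through.
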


\begin{proof}
(1)  We first check that $t_i(\mu)$ has the same size as $\mu$.  This is equivalent to showing that
none of the segments in the $k$-segmented word $w' = w'[1] \cdot \ldots \cdot w'[k]$ of 
Definition~\ref{switch-maps} has repeated letters.   

To begin, notice that none of the segments of $w(\mu) = w[1] \cdot \ldots \cdot w[k]$
have repeated letters.  Consider forming the $i$-decorated $k$-segmented word corresponding
to $w(\mu)$ and let $i^a (i+1)^b$ be a typical overlined subword.  We need to show that none of the 
transformations in Definition~\ref{switch-maps} induce repeated letters.  

\begin{itemize}
\item  {\bf Case 1:}  {\em $a > 0$ and $b = 0$.}  If the overlined run $i^a$ is not immediately preceded
by an $i$-drop, then 
\begin{itemize}
\item  the segment of $w(\mu)$ containing the leftmost $i$ in this run cannot contain $(i+1)$, for otherwise
this segment would be an (underlined) $i$-drop by Lemma~\ref{trichotomy}, and
\item  the segment of $w(\mu)$ containing the rightmost $i$ in this run cannot contain $(i+1)$, for otherwise
Lemma~\ref{trichotomy} implies that this rightmost $i$ must be immediately followed by $(i+1)$ and
this $(i+1)$ cannot be the first letter in an $i$-drop, contradicting the maximality of overlined runs. 
\end{itemize}
We conclude that changing every $i$ to an $i+1$ and leaving segmentation preserved does not induce 
any repeated letters within segments.

If the overlined run $i^a$ is immediately preceded by an $i$-drop, then we apply either the left or
right branch transformations in Case 1 of Definition~\ref{switch-maps}.  In either situation, the reasoning
of the last paragraph shows that the segment of $w(\mu)$ containing the rightmost $i$ cannot also contain
an $(i+1)$.  It follows that neither of these transformations induce repeated letters within segments.
\item  {\bf Case 2:}  {\em $a = 0$ and $b > 0$.}  If the overlined run $(i+1)^b$ is not immediately preceded
by an $i$-drop, then
\begin{itemize}
\item  the segment containing the leftmost $(i+1)$ in this run cannot also contain an $i$, for otherwise
this segment would end in $i(i+1)$ by Lemma~\ref{trichotomy}.  Lemma~\ref{trichotomy}
also implies that this $i$ cannot be the end of an $i$-drop, contradicting the maximality of overlined
runs, and
\item the segment containing the rightmost $(i+1)$ cannot also contain an $i$, for otherwise this segment
would be an (underlined) $i$-drop by Lemma~\ref{trichotomy}.
\end{itemize}
We conclude that changing every $i+1$ to an $i$ and leaving segmentation preserved does not 
induce any repeated letters within segments.

If the overlined run $(i+1)^b$ is immediately preceded by an $i$-drop, then we apply either the left
or right branch transformations of Case 2 of Definition~\ref{switch-maps}.  The reasoning of the last
paragraph shows that the segment containing the rightmost $(i+1)$ cannot also contain an $i$.
Moreover, the definition of an $i$-drop shows that none of the letters $j_1, \dots, j_{\ell}$
appearing in these braces equals $i$.   We conclude that neither of the transformations of these
branches induces repeated letters within segments.
\item  {\bf Case 3:}  {\em $a, b > 0$.}  In this case we apply the either the left or right brach of 
Case 3 of Definition~\ref{switch-maps} to the overlined run $i^a (i+1)^b$.  Since $a, b > 0$ it is evident
that this transformation does not induce repeated letters within blocks.
\end{itemize}

We conclude that none of the segments in the transformed word $w' = w'[1] \cdot \ldots \cdot w'[k]$
contains repeated letters, so that $t_i(\mu)$ has the same size as $\mu$.  
It remains to show that $t_i(\mu) \in \OP_{s_i.\beta,k}$.  Equivalently, we  need to show that
$t_i(\mu)$ has $k$ blocks and weight $s_i.\beta$.

Since none of the transformations of Definition~\ref{switch-maps}
changes the number of segments of a word (just the lengths of these segments), we 
have that $t_i(\mu)$ has the same number of blocks as $\mu$.  That is,
the ordered multiset  partition $t_i(\mu)$ has $k$ blocks.  

Since each 
underlined $i$-drop in $w(\mu)$ contains 
precisely one $i$ and one $(i+1)$, and each of the overlined $i^a (i+1)^b$ runs is transformed to
$i^b (i+1)^a$ by Definition~\ref{switch-maps}, we conclude that 
$t_i(\mu)$ has weight $s_i.\beta$.  
In summary, we have that $t_i(\mu)$ is an ordered
multiset partition in $\OP_{s_i.\beta,k}$.

(2)  Write $w(t_i(\mu)) := w''[1] \cdot \ldots \cdot w''[k]$ and $t_i(\mu) = (B'_1 | \cdots | B'_k)$.  
We need to show that $w'[s] = w''[s]$ for all
$1 \leq s \leq k$.  Definition~\ref{switch-maps} makes it clear that each segment $w''[s]$ is a rearrangement
of the letters in the segment $w'[s]$. 
It is therefore enough to show that the segments of 
$w'[1] \cdot \ldots \cdot w'[k]$ satisfy the recursive definition of the map $w$ applied
to $(B'_1 | \cdots | B'_k)$.  

Consider a typical overlined run $i^a (i+1)^b$ in the $i$-decorated $k$-segmented word corresponding to
$w$.  We may inductively assume that the segmented words $w'$ and $w''$ agree to the right of 
this overlined word.

\begin{itemize}
\item  {\bf Case 1:}  {\em $a > 0$ and $b = 0$.}  The portion of the segmented word $w(\mu)$ immediately 
to the right of this overlined run looks like 
\begin{equation*}
\underbrace{ \overline{i \cdot \ldots \cdot i} }_a j'_1 \cdots j'_{\ell'} \cdot r,
\end{equation*}
where we could have $\ell' = 0$.  By the definition of $w$, we have that $r \neq i$.  If
$r = i+1$, then the rightmost $i$ would be part of an $i$-drop.  We conclude that 
$r > i+1$ or $r < i$.  The definition of $w$ and the maximality
of overlined subwords imply that none of the letters
$j'_1, \dots, j'_{\ell'}$ lie in $\{i, i+1\}$.  
Moreover, the segment containing $i$ does not contain $(i+1)$ be Lemma~\ref{trichotomy}.
The transformations of Case 1 change this portion of $w(\mu)$ to
\begin{equation*}
\underbrace{ (i+1) \cdot \ldots \cdot (i+1) }_a j'_1 \cdots j'_{\ell'} \cdot r,
\end{equation*}
perhaps altering the segmentation just to the left.  This segmented
subword agrees with the recursive definition of the map $w$.

\item  {\bf Case 2:}  {\em $a = 0$ and $b > 0$.}
The portion of the segmented word $w(\mu)$ immediately to the right of this overlined run looks like
\begin{equation*}
\underbrace{ \overline{(i+1) \cdot \ldots \cdot (i+1)} }_b j'_1 \cdots j'_{\ell'} \cdot r,
\end{equation*}
where we could have $\ell' = 0$. 
If $r = i$, the rightmost $(i+1)$ would be part of an $i$-drop.  By the definition of the map 
$w$, we know that $r \neq i$.  We conclude that $r > i+1$ or $r < i$.  Lemma~\ref{trichotomy}
implies that none of the letters $j'_1, \dots, j'_{\ell'}$ lie in $\{i, i+1\}$.
The  transformations of Case 2 change this portion of $w(\mu)$ to
\begin{equation*}
\underbrace{ i \cdot \ldots \cdot i }_b j'_1 \cdots j'_{\ell'} \cdot r,
\end{equation*}
perhaps altering the segmentation just to the left.  This segmented subword agrees with 
the recursive definition of the map $w$.

\item  {\bf Case 3:}  {\em $a, b > 0$.}  This case is easier and left to the reader.
\end{itemize}

(3)  A glance at the transformations of Definition~\ref{switch-maps} shows that none of them
affects descent sets.  Now apply (2).

(4)  Apply (3) together with Lemma~\ref{minimaj-alternative-word}.

(5)  Consider the $i$-decorated $k$-segmented
word $w(\mu) = w[1] \cdot \ldots \cdot w[k]$ corresponding to $\mu$.  
We consider applying the transformations
of Definition~\ref{switch-maps} to $w(\mu)$ twice.

Let $w' = w'[1] \cdot \ldots \cdot w'[k]$ be the $k$-segmented word constructed
in Definition~\ref{switch-maps} and consider the $i$-decoration of $w'$.  It is easy to see
that the underlined and overlined portions of this $i$-decoration occur in precisely the same
positions as those in $w(\mu)$.

The transformation in the left  branch of Case 1 leads to the transformation in the left 
branch of Case 2, and vice versa.  The transformation in the right  branch of Case 1 leads to the 
transformation of the right  branch of Case 2, and vice versa.  The transformation of  either
branch of Case 3 leads to another transformation of the same branch.  Since the roles of $a$ and $b$ are 
interchanged in the second transformation, the application to two transformations recovers the 
$k$-segmented word $w(\mu)$.  In particular, we have $t_i^2(\mu) = \mu$.
\end{proof}

It can  be checked that the switch maps satisfy 
$t_i t_j = t_j t_i$ for $|i - j| > 1$, but we will not need this relation here.
The most important consequence of Proposition~\ref{switch-well-defined} for us is the following.

\begin{corollary}
\label{rearrange}
Let $k > 0$, let $\beta$ be a weak composition, 
and let $\beta'$ be any weak composition obtained by rearranging the 
 parts of $\beta$.  We have that
\begin{equation}
M_{\beta, k}(q) = M_{\beta',k}(q).
\end{equation}
\end{corollary}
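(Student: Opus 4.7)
The plan is to deduce the corollary directly from Proposition~\ref{switch-well-defined} by using the switch maps as a means of reducing the general rearrangement statement to the adjacent-transposition case. Since any rearrangement of the parts of $\beta$ can be realized by a sequence of adjacent transpositions $s_i$, it suffices to prove that $M_{\beta,k}(q) = M_{s_i.\beta,k}(q)$ for every $1 \leq i \leq \ell(\beta) - 1$.

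For the single-adjacent-transposition case, I would invoke Proposition~\ref{switch-well-defined} parts (1), (4), and (5) in order. Part (1) guarantees that the switch map $t_i$ is a well-defined function from $\OP_{\beta,k}$ to $\OP_{s_i.\beta,k}$. Part (5) gives $t_i^2 = \mathrm{id}$, which combined with part (1) (applied to both $\beta$ and $s_i.\beta$, noting $s_i^2 = 1$) implies that $t_i$ is a bijection between $\OP_{\beta,k}$ and $\OP_{s_i.\beta,k}$. Part (4) states that $t_i$ preserves the $\minimaj$ statistic. Consequently, the bijection $t_i$ partitions $\OP_{\beta,k}$ and $\OP_{s_i.\beta,k}$ into pairs of equal $\minimaj$ value, which yields the polynomial identity
\begin{equation*}
M_{\beta,k}(q) = \sum_{\mu \in \OP_{\beta,k}} q^{\minimaj(\mu)} = \sum_{\mu \in \OP_{\beta,k}} q^{\minimaj(t_i(\mu))} = M_{s_i.\beta,k}(q).
\end{equation*}

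To complete the proof, I would then write any rearrangement as a word $s_{i_1} s_{i_2} \cdots s_{i_r}$ in the adjacent transpositions and iterate the single-step identity, yielding $M_{\beta,k}(q) = M_{\beta',k}(q)$. The main obstacle here is really just organizational rather than mathematical; the entire substance of the result has already been packaged into Proposition~\ref{switch-well-defined}, whose proof contains all of the combinatorial difficulty (verifying that the intricate Definition~\ref{switch-maps} actually produces a well-defined, size-preserving, weight-swapping, descent-preserving involution). Once those properties are in hand, the corollary is essentially formal.
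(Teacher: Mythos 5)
Your proposal is correct and matches the paper's proof, which likewise observes that each switch map $t_i: \OP_{\beta,k} \to \OP_{s_i.\beta,k}$ is a $\minimaj$-preserving bijection by Proposition~\ref{switch-well-defined} and (implicitly) composes adjacent transpositions to reach an arbitrary rearrangement. Your write-up simply makes explicit the involution-implies-bijection and iteration steps that the paper leaves to the reader.
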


\begin{proof}
For any $i$, the map $t_i: \OP_{\beta, k} \rightarrow \OP_{s_i.\beta, k}$ is a $\minimaj$-preserving bijection.
\end{proof}

As another immediate corollary of Proposition~\ref{switch-well-defined}, we can deduce the symmetry
of $\Val_{n,k}(x;0,q)$ in the $x$ variables.  It is unknown whether $\Val_{n,k}(x; q,t)$ is symmetric.

\begin{corollary}
\label{valley-symmetric}
Let $n \geq k > 0$.  The formal power series
\begin{equation*}
\Val_{n,k}(x;0,q) =  \sum_{\mu} q^{\minimaj(\mu)} x^{\mathrm{weight}(\mu)}
\end{equation*}
is symmetric in the variables $x = (x_1, x_2, \dots )$, where the sum is over 
all ordered multiset partitions $\mu$ with $k+1$ blocks and size $n$.
\end{corollary}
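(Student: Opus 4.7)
The plan is to deduce the claimed symmetry as an essentially immediate consequence of the switch-map machinery developed earlier in this section. The first step is to invoke Theorem~\ref{valley-interpretation} to rewrite
\begin{equation*}
\Val_{n,k}(x;0,q) \;=\; \sum_{\beta \models_0 n} M_{\beta,k+1}(q)\, x^{\beta},
\end{equation*}
so that the coefficient of each monomial $x^{\beta} = x_1^{\beta_1} x_2^{\beta_2} \cdots$ is precisely the $\minimaj$ generating function $M_{\beta,k+1}(q)$ summed over $\OP_{\beta,k+1}$. This is just a reindexing of the sum in the statement of the corollary, grouping ordered multiset partitions by their weight.

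A formal power series written in this form is symmetric in $x = (x_1, x_2, \dots)$ if and only if its monomial coefficients depend only on the multiset of parts of $\beta$, equivalently if they are invariant under arbitrary permutations of the weak composition $\beta$. Since the infinite symmetric group acting on weak compositions is generated by the adjacent transpositions, symmetry reduces to the identity $M_{\beta,k+1}(q) = M_{s_i.\beta,\,k+1}(q)$ for every $i \geq 1$. This is exactly the content of Corollary~\ref{rearrange}, which in turn follows from Proposition~\ref{switch-well-defined} via the $\minimaj$-preserving bijection $t_i:\OP_{\beta,k+1} \to \OP_{s_i.\beta,\,k+1}$.

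All of the genuine difficulty is concentrated in the preceding construction and analysis of the switch maps, in particular in verifying that $t_i(\mu)$ is a well-defined ordered multiset partition of the correct weight and block count, that it preserves $\minimaj$, and that $t_i^2 = \mathrm{id}$. Given those results, the derivation of this corollary is a one-line deduction, and no further obstacle is anticipated beyond correctly quoting the pieces already in place.
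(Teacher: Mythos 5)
Your proposal is correct and follows essentially the same route as the paper: the paper also deduces the symmetry directly from Proposition~\ref{switch-well-defined} (equivalently Corollary~\ref{rearrange}), using the interpretation of $\Val_{n,k}(x;0,q)$ from Theorem~\ref{valley-interpretation} and the $\minimaj$-preserving bijections $t_i:\OP_{\beta,k+1}\to\OP_{s_i.\beta,\,k+1}$ to show the coefficient of $x^{\beta}$ depends only on the multiset of parts of $\beta$. No gaps; the reduction to adjacent transpositions is exactly the intended one-line argument.
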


Corollary~\ref{valley-symmetric} is false if we instead sum over ordered set partitions $\mu$ with fixed shape.
For example, we have 
\begin{equation*}
\minimaj(13|12) = 1, \text{ } \minimaj(12|13) = 1, \text{ }
\minimaj(12|23) = 0, \text{ } \minimaj(23|12) = 2.
\end{equation*}
It follows that the coefficient of $x_1^2 x_2 x_3$ in the quasisymmetric function
\begin{equation*}
 \sum_{\mathrm{shape}(\mu) = (2,2)} q^{\minimaj(\mu)} x^{\mathrm{weight}(\mu)}
 \end{equation*}
is $2q$ but the coefficient of $x_1 x_2 x_3^2$ is $1 + q^2$.

Proposition~\ref{switch-well-defined} can be used to prove something more refined than 
Corollary~\ref{valley-symmetric}.  Namely, for any subset $S \subseteq [n-1]$ the quasisymmetric
function
\begin{equation*}
F_{n,k,S}(x; q) := 
\sum_{\substack{\text{$\mu$ has size $n$} \\ \text{$\mu$ has $k$ blocks} \\ \Des(w(\mu)) = S}}
q^{\minimaj(\mu)} x^{\mathrm{weight}(\mu)} = q^{\sum_{i \in S} i} \times
\sum_{\substack{\text{$\mu$ has size $n$} \\ \text{$\mu$ has $k$ blocks} \\ \Des(w(\mu)) = S}}
 x^{\mathrm{weight}(\mu)} 
\end{equation*}
is symmetric.  I. Gessel observed (personal communication) that when $k = n$, the symmetry
of $F_{n,n,S}(x;q)$ follows from the fact that
\begin{equation*}
\sum_{\substack{\text{$\mu$ has size $n$} \\ \text{$\mu$ has $n$ blocks} \\ 
\Des(\mu) \subseteq \{i_1 < \cdots < i_r\}} } x^{\mathrm{weight}(\mu)} =
h_{i_1}(x) h_{i_2 - i_1}(x) \cdots h_{i_r - i_{r-1}}(x)
\end{equation*}
(where $h_d(x)$ is the complete homogeneous symmetric function of degree $d$) and  
the Principle of Inclusion-Exclusion \cite{GesselComm}.  The author is unaware of an
analogous proof of the symmetry of $F_{n,k,S}(x; q)$.
The author is also unaware of what relationship (if any) the polynomials 
$F_{n,k,S}(x;q)$ have with the Delta Conjecture.

\subsection{$M_{\beta,k}(q) = I_{\beta,k}(q)$}
We are ready to prove our equidistribution result for ordered multiset partitions.

\begin{theorem}
\label{same-distribution-word}
Let $\beta \models_0 n$ be a weak composition and let $k \in \ZZ_{\geq 0}$.  We have that
\begin{equation}
I_{\beta, k}(q) = M_{\beta, k}(q).
\end{equation}
\end{theorem}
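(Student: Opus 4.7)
The plan is to induct on $k$, proving that $I_{\beta,k}(q)$ and $M_{\beta,k}(q)$ satisfy the same recursion so that the inductive hypothesis closes the argument. The base case $k=0$ is immediate because $\OP_{\beta,0}$ has at most one element. For the inductive step I classify $\mu=(B_1|\cdots|B_k)\in\OP_{\beta,k}$ by the structure of its terminal block $B_k=\{j\}\cup T$, where $j=\min(B_k)$ and $T\subseteq\{i>j\,:\,\beta_i\geq 1\}$.

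For $I_{\beta,k}(q)$, I would sum the shape recursion of Lemma~\ref{inv-recursion-word} over all $\alpha\models n$ with $\ell(\alpha)=k$; letting $e_i$ denote the $i$th standard basis vector and writing $S=\{j\}\cup T$, this produces
\[
I_{\beta,k}(q) \;=\; \sum_{j\,:\,\beta_j\geq 1}\,\sum_{T\subseteq\{i>j\,:\,\beta_i\geq 1\}} q^{\,(\beta_{j+1}+\cdots+\beta_m)-|T|}\, I_{\beta-e_j-\sum_{i\in T}e_i,\,k-1}(q).
\]

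For $M_{\beta,k}(q)$ I would combine three of the preceding results. First, Lemma~\ref{compress-word} shrinks the terminal block $\{j\}\cup T$ to the singleton $\{j\}$ without changing $\minimaj$, converting the computation into a sum over ordered multiset partitions $\bar\mu$ of weight $\beta':=\beta-\sum_{i\in T}e_i$ whose last block is $\{j\}$. Summing Lemma~\ref{basic-minimaj-recursion-word} over the shape of the first $k-1$ blocks then yields
\[
\sum_{\substack{\bar\mu\in\OP_{\beta',k}\\ B_k=\{j\}}} q^{\minimaj(\bar\mu)} \;=\; q^{\beta'_{j+1}+\cdots+\beta'_m}\, M_{(\beta'_{j+1},\dots,\beta'_m,\beta'_1,\dots,\beta'_j-1),\,k-1}(q).
\]
Corollary~\ref{rearrange} collapses the cyclic rearrangement on the right to $M_{\beta'-e_j,\,k-1}(q)$. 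Since $T\subseteq\{j+1,\dots,m\}$ one has $\beta'_{j+1}+\cdots+\beta'_m=(\beta_{j+1}+\cdots+\beta_m)-|T|$, and assembling the pieces gives
\[
M_{\beta,k}(q) \;=\; \sum_{j}\,\sum_{T} q^{\,(\beta_{j+1}+\cdots+\beta_m)-|T|}\, M_{\beta-e_j-\sum_{i\in T}e_i,\,k-1}(q),
\]
which is term-for-term identical with the recursion for $I_{\beta,k}(q)$ displayed above. The inductive hypothesis $I_{\beta'',k-1}(q)=M_{\beta'',k-1}(q)$ applied to each summand then delivers $I_{\beta,k}(q)=M_{\beta,k}(q)$.

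The main obstacle is the collapse of the cyclic rearrangement of $\beta'$ in the $M$-recursion: Lemma~\ref{basic-minimaj-recursion-word} introduces a cyclic shift of the weight that has no counterpart in Lemma~\ref{inv-recursion-word}, so without the rearrangement invariance of $M_{\cdot,k-1}(q)$ established via the switch maps (Corollary~\ref{rearrange}), the two recursions could not be matched. This is precisely why the theorem must be stated at the $k$-block level and why the shape-refined analogue $M_{\beta,\alpha}(q)=I_{\beta,\alpha}(q)$ fails in general.
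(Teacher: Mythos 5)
Your proposal is correct and follows essentially the same route as the paper: both match the $\inv$ recursion of Lemma~\ref{inv-recursion-word} (summed over shapes) against a $\minimaj$ recursion built from Lemma~\ref{compress-word}, Lemma~\ref{basic-minimaj-recursion-word}, and the rearrangement invariance of Corollary~\ref{rearrange}, then close by induction on the number of blocks. The only difference is organizational --- you compress the terminal block $\{j\}\cup T$ first and then invoke the singleton-block recursion, while the paper first sums the $\alpha_k=1$ case over shapes and then extends to general $\alpha_k$ --- which does not change the substance of the argument.
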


\begin{proof}
Write $\beta = (\beta_1, \dots, \beta_m) \models_0 n$ and let
$\alpha = (\alpha_1, \dots, \alpha_k) \models n$ with $\alpha_k = 1$.  Lemma~\ref{basic-minimaj-recursion-word} says that
\begin{equation}
\label{same-distribution-word-first}
M_{\beta, \alpha}(q) = \sum_{\beta_i > 0} q^{\beta_{i+1} + \beta_{i+2} + \cdots + \beta_m} 
M_{(\beta_{i+1}, \dots, \beta_m, \beta_1, \dots, \beta_i - 1), (\alpha_1, \dots, \alpha_{k-1})}(q).
\end{equation}
Sum both sides of  
Equation~\ref{same-distribution-word-first}
over all compositions $\overline{\alpha} = (\alpha_1, \dots, \alpha_{k-1})$ of $n-1$ which have $k-1$ parts.
This yields
\begin{align}
\label{main-chain}
\sum_{\overline{\alpha}} M_{\beta, (\alpha_1, \dots, \alpha_{k-1}, 1)}(q) &=
\sum_{\overline{\alpha}} 
\sum_{\beta_i > 0} q^{\beta_{i+1} + \beta_{i+2} + \cdots + \beta_m} 
M_{(\beta_{i+1}, \dots, \beta_m, \beta_1, \dots, \beta_i - 1), \overline{\alpha}}(q) \\
&= \sum_{\beta_i > 0}
\sum_{\overline{\alpha}} q^{\beta_{i+1} + \beta_{i+2} + \cdots + \beta_m} 
M_{(\beta_{i+1}, \dots, \beta_m, \beta_1, \dots, \beta_i - 1), \overline{\alpha}}(q) \\
&= \sum_{\beta_i > 0}
q^{\beta_{i+1} + \beta_{i+2} + \cdots + \beta_m} 
M_{(\beta_{i+1}, \dots, \beta_m, \beta_1, \dots, \beta_i - 1), k-1}(q) \\
&= \sum_{\beta_i > 0} q^{\beta_{i+1} + \beta_{i+2} + \cdots + \beta_m} 
M_{(\beta_1, \dots, \beta_i - 1, \dots, \beta_m), k-1}(q).
\end{align}
The first equality is Equation~\ref{same-distribution-word-first} summed over all compositions
$\overline{\alpha} \models n-1$ with $k-1$ parts.
The second equality interchanges the order of summation.
The third equality evaluates the inner sum over all $\overline{\alpha}$.  The fourth equality is an application
of Corollary~\ref{rearrange}.

The left hand side of Equation~\ref{main-chain}
is the generating function for $\minimaj$ on the collection of all $k$-block multiset partitions 
of weight $\beta$ whose last block has size $1$.  Lemma~\ref{compress-word} shows that for general 
$\alpha_k \geq 1$,
\begin{equation}
\label{main-word-finale}
\sum_{(\alpha_1, \dots, \alpha_{k-1})}  
M_{\beta, (\alpha_1, \dots, \alpha_{k-1}, \alpha_k)}(q) = \sum_S q^{\sum_{i = \min(S)+1}^m (\beta_i - \chi(S)_i)} M_{\beta - \chi(S), k-1}(q),
\end{equation}
where the sum on the left hand side is over all $(k-1)$-part compositions
$(\alpha_1, \dots, \alpha_{k-1}) \models n - \alpha_k$ and  
 the external sum on the right hand side is over all subsets 
$S \subseteq [m]$ of indices such that $\beta_i > 0$ for all
$i \in S$.  Lemma~\ref{inv-recursion-word} shows that 
$\sum_{(\alpha_1, \dots, \alpha_{k-1})} I_{\beta, (\alpha_1, \dots, \alpha_{k-1}, \alpha_k)}(q)$ satisfies the same recursion.  
\end{proof}  

The proof of Theorem~\ref{same-distribution-word} proves a slightly stronger statement.  Namely, for any fixed 
weak composition $\beta$ and positive integer $a$, we have that 
\begin{equation}
\sum_{\mu} q^{\inv(\mu)} = \sum_{\mu} q^{\minimaj(\mu)},
\end{equation}
where the sums are over all $k$-block ordered multiset partitions $\mu$ of weight $\beta$ whose 
last block has size $a$.  The connection of this more refined equidistribution result to the Delta Conjecture
is unclear.

Theorem~\ref{valley-function-equality} follows by combining 
Theorems~\ref{valley-interpretation} and
\ref{same-distribution-word}.  We restate it here.

\noindent
{\bf Theorem 1.3.}  {\em For all $n > k \geq 0$, we have the equality
\begin{equation*}
\Val_{n,k}(x; 0, q) = \Val_{n,k}(x; q, 0).
\end{equation*}
Consequently, the coefficients of the monomial quasisymmetric function 
$M_{1^n}$ are equal in each of the following:
\begin{equation*}
\Rise_{n,k}(x;q,0), \text{ } \Rise_{n,k}(x; 0,q), \text{ } \Val_{n,k}(x; q,0), \text{ } \Val_{n,k}(x; 0, q), \text{ }
\Delta'_{e_k} e_n|_{t = 0}, \text{ } \Delta'_{e_k} e_n|_{q = 0, t = q}.
\end{equation*}}

In light of the Delta Conjecture, it is natural to ask for the expansion of the symmetric functions
$\Val_{n,k}(x;q,0) = \Val_{n,k}(x;0,q)$ in the Schur basis $\{s_{\lambda} \,:\, \lambda \vdash n\}$.
It turns out that $\Val_{n,k}(x; 0,q)$ is Schur positive -- the coefficients in this expansion 
are polynomials in $q$ with nonnegative coefficients.  An explicit formula
for these coefficients follows immediately from the work of  Wilson \cite{WMultiset}.

To give the Schur expansion of $\Val_{n,k}(x; 0,q)$, we will need some notation.  
Given a partition $\lambda \vdash n$, let $\mathrm{SYT}(\lambda)$ denote the set of standard
Young tableaux of shape $\lambda$.  For $T \in \mathrm{SYT}(\lambda)$, a {\em descent} of $T$
is an index $1 \leq i \leq n-1$ such that $i+1$ appears in a lower row of $T$ than $i$.  Let 
$\mathrm{des}(T)$ denote the number of descents of $T$.  Also let $\maj(T)$ denote the sum of the descents of $T$.

For example, suppose that $T \in \mathrm{SYT}(3,3,2)$ is the standard tableau
\begin{center}
\begin{Young}
1 & 3 & 4  \cr
2 & 6 & 7 \cr
5 & 8
\end{Young}.
\end{center}
The descents of $T$ are $1, 4,$ and $7$ so that $\mathrm{des}(T) = 3$ and
$\maj(T) = 1 + 4 + 7 = 12$.

The following corollary follows immediately from an insertion argument due to
Wilson \cite{WMultiset} and Theorem~\ref{valley-function-equality}.
We use the usual $q$-binomial ${n \brack k}_q := \frac{[n]!_q}{[k]!_q [n-k]!_q}$.

\begin{corollary} (Wilson \cite{WMultiset})
\label{valley-frobenius}
For a partition $\lambda \vdash n$, we have that the coefficient of $s_{\lambda}$
in $\Val_{n,k-1}(x;q,0) = \Val_{n,k-1}(x;0,q)$ equals
\begin{equation}
\label{schur-expansion}
\sum_{T \in \mathrm{SYT}(\lambda)} q^{\maj(T) + {n-k \choose 2} - (n-k) \cdot \mathrm{des}(T)} 
{ \mathrm{des}(T) \brack n-k}_q.
\end{equation}
In particular, we have that $\Val_{n,k-1}(x; 0,q)$ is Schur positive.
\end{corollary}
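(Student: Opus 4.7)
The plan is to combine Theorem~\ref{valley-function-equality} with an insertion argument of Wilson \cite{WMultiset}. By Theorem~\ref{valley-interpretation}, we have
\begin{equation*}
\Val_{n,k-1}(x;0,q) = \sum_{\beta \models_0 n} \sum_{\mu \in \OP_{\beta,k}} q^{\minimaj(\mu)} x^{\beta},
\end{equation*}
so it suffices to extract the coefficient of $s_\lambda$ from this quasisymmetric expansion, which by Corollary~\ref{valley-symmetric} is in fact a symmetric function in $x$.

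The core step is to construct an RSK-type insertion
\begin{equation*}
\Phi: \bigsqcup_{\beta \models_0 n} \OP_{\beta, k} \longrightarrow \bigsqcup_{\lambda \vdash n} \left\{ (P, Q, D) \,:\, P \in \mathrm{SSYT}(\lambda),\ Q \in \mathrm{SYT}(\lambda),\ D \subseteq \Des(Q),\ |D| = n-k \right\},
\end{equation*}
with the property that the content of $P$ equals the weight $\beta$ of $\mu$, the insertion is a bijection, and the statistic $\minimaj$ decomposes additively as
\begin{equation*}
\minimaj(\mu) = \maj(Q) + \binom{n-k}{2} - (n-k)\,\mathrm{des}(Q) + \mathrm{wt}(D, Q),
\end{equation*}
where $\mathrm{wt}(D,Q)$ is a weight on the subset $D$ such that $\sum_D q^{\mathrm{wt}(D,Q)} = {\mathrm{des}(Q) \brack n-k}_q$. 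A natural candidate is to apply standard RSK to the word $w(\mu) = w[1] \cdot \ldots \cdot w[k]$ of Definition~\ref{segment-pi-map} (using Lemma~\ref{minimaj-alternative-word} to equate $\minimaj(\mu) = \maj(w(\mu))$) and then record the $n-k$ positions where a segment boundary of $w(\mu)$ falls strictly inside a block; these positions must occur at descents of $Q$, since within a non-singleton block the map $w$ creates a strictly decreasing sequence whose final letter is a descent top.

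Once the insertion and the additive decomposition are in place, summing over all semistandard $P$ of shape $\lambda$ (with arbitrary content) produces $s_\lambda(x)$, and summing $q^{\mathrm{wt}(D,Q)}$ over $D$ yields the Gaussian binomial ${\mathrm{des}(Q) \brack n-k}_q$ by the standard identity for $q$-binomials as generating functions over subsets. The main obstacle is verifying that the insertion really does produce the claimed additive splitting of $\minimaj$; this amounts to tracking how the $\maj$ statistic behaves when one records and then forgets the segment boundaries of $w(\mu)$. This is precisely the content of Wilson's insertion argument in \cite{WMultiset}, which was originally phrased for $\inv$; transporting it across the equidistribution Theorem~\ref{valley-function-equality} (or equivalently Theorem~\ref{same-distribution-word}) then yields the stated Schur expansion and, in particular, Schur positivity.
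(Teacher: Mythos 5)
Your closing reduction is exactly the proof in the paper: Wilson's insertion theorem \cite[Theorem 5.0.1]{WMultiset} (setting $m=0$ and extracting the coefficient of $u^{n-k}$) gives the stated Schur expansion of $\Val_{n,k-1}(x;q,0)$, and Theorem~\ref{valley-function-equality} transfers it to $\Val_{n,k-1}(x;0,q)$, which also yields Schur positivity. Nothing beyond that citation and Theorem~\ref{valley-function-equality} is needed, so in its final form your argument coincides with the paper's.

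That said, the ``natural candidate'' insertion you sketch on the minimaj side does not work as described, and it is not an accurate paraphrase of Wilson's argument. If $(P,Q)$ is the RSK image of the word underlying $w(\mu)$, then $\Des(Q)=\Des(w(\mu))$, and Lemma~\ref{minimaj-alternative-word} already gives $\maj(Q)=\maj(w(\mu))=\minimaj(\mu)$; your proposed splitting $\minimaj(\mu)=\maj(Q)+{n-k \choose 2}-(n-k)\,\mathrm{des}(Q)+\mathrm{wt}(D,Q)$ would therefore force the correction terms to vanish identically, which they do not. The description of $D$ is also off: the segments of $w(\mu)$ correspond exactly to the blocks of $\mu$, so no ``segment boundary falls strictly inside a block''; by Definition~\ref{segment-pi-map} a segment is a cyclic rotation of an increasing word, hence has at most one internal descent and is never strictly decreasing; and the positions where consecutive letters of $w(\mu)$ lie in a common block are typically ascents (e.g.\ $\mu=(12|3)$ gives $w(\mu)=12\cdot 3$), so they need not occur at descents of $Q$. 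Recording the forgotten segmentation by a subset of descents weighted so as to produce $q^{{n-k \choose 2}-(n-k)\mathrm{des}(T)}{\mathrm{des}(T) \brack n-k}_q$ is exactly the nontrivial content of Wilson's theorem, and his argument is carried out for the $\inv$ side, i.e.\ for $\Val_{n,k-1}(x;q,0)$. The clean proof is to quote it there and then apply Theorem~\ref{valley-function-equality}, as your last sentence does; the direct minimaj insertion, as sketched, has a genuine gap and should be dropped.
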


\begin{proof}
For $\Val_{n,k-1}(x;q,0)$, this follows from letting $m = 0$ and taking the coefficient 
of $u^{n-k}$ in \cite[Theorem 5.0.1]{WMultiset}.  Now apply 
Theorem~\ref{valley-function-equality}.
\end{proof}

\begin{example}
The calculation
\begin{center}
$\begin{array}{ccc}
\minimaj(1|23) = 0 & \minimaj(2|13) = 1 & \minimaj(3|12) = 1 \\
\minimaj(12|3) = 0 & \minimaj(13|2) = 1 & \minimaj(23|1) = 2
\end{array}$
\end{center}
shows that the coefficient of $x_1 x_2 x_3$ in 
$\Val_{3,1}(x;0,q)$ is $q^2 + 3q + 2$.
The calculation
\begin{center}
$\begin{array}{cc}
\minimaj(1|12) = 0 & \minimaj(12|1) = 1
\end{array}$
\end{center}
shows that the coefficient of $x_1^2 x_1$ in 
$\Val_{3,1}(x;0,q)$ is $q + 1$.
Since there are no ordered multiset partitions of weight $(3)$ with
$2$ blocks, the coefficient of $x_1^3$ in $\Val_{3,1}(x;0,q)$ is $0$.  
We therefore have the monomial basis expansion of the symmetric function $\Val_{3,1}(x;0,q)$ given by
\begin{equation*}
\Val_{3,1}(x;0,q) = (q^2 + 3q + 2) m_{(1,1,1)} + (q+1) m_{(2,1)},
\end{equation*}
which leads to the Schur basis expansion
\begin{equation*}
\Val_{3,1}(x;0,q) = (q^2 + q) s_{(1,1,1)} + (q + 1) s_{(2,1)}.
\end{equation*}
To see that this agrees with Corollary~\ref{valley-frobenius}, one may check that the four standard Young
tableaux with three boxes
{\em \begin{center}
\begin{Young}
1 \cr
2 \cr 
3
\end{Young}  \hspace{0.1in}
\begin{Young}
1 & 2 \cr
3
\end{Young}  \hspace{0.1in}
\begin{Young}
1 & 3 \cr
2
\end{Young} \hspace{0.1in}
\begin{Young}
1 & 2 & 3
\end{Young}
\end{center}}
\noindent
contribute (from left to right)
$(q^2 + q) s_{(1,1,1)}, q s_{(2,1)}, s_{(2,1)},$ and $0 s_{(3)}$
to the Schur expansion of $\Val_{3,1}(x;0,q)$ predicted by
(\ref{schur-expansion})
\end{example}

In contrast to the state of affairs regarding $\Val_{n,k}(x;q,t)$, it is known that the 
other 
quasisymmetric function $\Rise_{n,k}(x;q,t)$ 
appearing in the Delta Conjecture
is both symmetric in the $x$-variables and 
Schur positive \cite{HRW}.  However, the proof of Schur positivity relies
on the positivity of LLT polynomials -- a positivity result which lacks a combinatorial proof.

Wilson \cite{WMultiset}  proved combinatorially that we have the identities
\begin{equation}
\Rise_{n,k}(x;q,0) = \Rise_{n,k}(x;0,q) = \Val_{n,k}(x;q,0).
\end{equation}
It follows that the coefficient of $s_{\lambda}$ in the Schur basis expansion of 
$\Rise_{n,k-1}(x,q,0)$ or $\Rise_{n,k-1}(x;0,q)$ is also given by
(\ref{schur-expansion}).

\section{Colored set partitions and wreath products}
\label{Colored}

\subsection{Colored Permutations and Statistics}
A natural generalization of permutations can be obtained by coloring their entries.  
Let $n, r > 0$ and consider the alphabet
\begin{equation}
\AAA_{n,r} := \{i^{j} \,:\, 1 \leq i \leq n, 0 \leq j \leq r-1\}.  
\end{equation}
We think of $i^{j}$ as the letter $i$ augmented 
with the `color' $j$.
The {\em wreath product} $C_r \wr S_n$ consists of all length $n$ words 
$\pi_1 \dots \pi_n$ in $\AAA_{n,r}$ which contain just one copy of the letter $i$
(of any color) for all $1 \leq i \leq n$.  For example, we have
$\pi = 3^0 4^2 5^0 1^2 2^1 \in C_3 \wr S_5$.  We can also think of elements in $C_r \wr S_n$
as pairs $(\pi, \epsilon)$, where $\pi \in S_n$ is the uncolored permutation and 
$\epsilon = (\epsilon_1, \dots, \epsilon_n) \in \{0, 1, \dots, r-1\}^n$ is the {\em color sequence}.  In this language,
the colored permutation $3^0 4^2 5^0 1^2 2^1$ would be written as 
$(34512, 02021)$.

As a group, the wreath product $C_r \wr S_n$ is isomorphic to the complex reflection
group $G(r,1,n)$ consisting of $n \times n$ monomial complex matrices whose nonzero entries
are $r^{th}$ roots of unity.  We will have no occasion to use the group structure on 
$C_r \wr S_n$.

One extension of major index to colored permutations which has appeared in the literature is
as follows.  Introduce the following total order $\prec$ on the set $\AAA_{n,r}$:
\begin{equation}
1^{r-1} \prec \cdots \prec n^{r-1} \prec \cdots \prec 1^{1} \prec \cdots \prec n^{1} \prec
1^{0} \prec \cdots \prec n^{0}.
\end{equation}
Let $\pi = \pi_1 \dots \pi_n \in C_r \wr S_n$ have color sequence 
$\epsilon = (\epsilon_1, \dots, \epsilon_n)$.  The {\em major index} of $\pi$ is
\begin{equation}
\maj(\pi) := \left(r \sum_{\pi_{i+1} \prec \pi_i} i \right) + \sum_{i = 1}^n \epsilon_i.
\end{equation}
This statistic is called the  `flag major index' in \cite{AR, HLR}.  Inside $C_3 \wr S_5$ we have
\begin{equation*}
\maj(3^0 4^2 5^0 1^2 2^1) = 3(1 + 3) + (0 + 2 + 0 + 2 + 1) = 17.
\end{equation*}

A {\em $C_r \wr S_n$-ordered set partition} is a sequence 
$\sigma = (B_1 | \dots | B_k)$ of nonempty subsets of $\AAA_{n,r}$ such that, for all $1 \leq i \leq n$,
precisely one copy of $i$ appears (with any color) in $\sigma$.
As before, we call the sets $B_1, \dots, B_k$ the {\em blocks} of $\sigma$, we say that 
$\sigma$ has {\em $k$ blocks}, and we call the composition
$\alpha = (\alpha_1, \dots, \alpha_k) \models n$ the {\em shape} of $\sigma$.
For example, we have that 
\begin{equation*}
\sigma = (2^0 3^2 | 4^0 | 5^0 1^1)
\end{equation*}
 is a $C_3 \wr S_5$-ordered set partition  with $3$ blocks
of shape $(2,1,2) \models 5$.
We have the collections of $C_r \wr S_n$-ordered set partitions:
\begin{align}
\OP^r_n &:= \{ \text{all $C_r \wr S_n$-ordered set partitions} \}, \\
\OP^r_{n,k} &:= \{ \text{all $C_r \wr S_n$-ordered set partitions with $k$ blocks} \}, \\
\OP^r_{n,\alpha} &:= \{ \text{all $C_r \wr S_n$-ordered set partitions of shape $\alpha$} \}.
\end{align}

Let $\sigma = (B_1 | \dots | B_k)$ be a $C_r \wr S_n$-ordered set partition.  We define $C \wr S(\sigma)$ to be the set of elements of 
$C_r \wr S_n$ obtained by permuting the letters within the blocks of $\sigma$.  
For example,
\begin{equation*}
C \wr S(2^0 3^2 | 4^0 | 5^0 1^1) = \{ 2^0 3^2  4^0  5^0 1^1, 3^2 2^0  4^0  5^0 1^1, 2^0 3^2  4^0  1^1 5^0, 3^2 2^0  4^0   1^1 5^0 \}.
\end{equation*}
The set $C \wr S (\sigma)$ may be regarded as a parabolic coset in the complex reflection group
$C_r \wr S_n$.
The $\minimaj$ statistic on $\OP^r_n$ is defined by
\begin{equation}
\minimaj(\sigma) := \min\{\maj(\pi) \,:\, \pi \in C \wr S(\sigma) \}.
\end{equation}

In order to prove an analog of Theorem~\ref{same-distribution-permutations} 
for colored ordered set partitions, we need to define a suitable 
generalization of $\inv$ on $\OP^r_n$ for $r > 1$.  Let $d: \AAA_{n,r} \rightarrow [n]$ be the decolorizing map given by
$d(i^j) := i$ and extend $d$ to a map $d: \OP^r_n \rightarrow \OP_n$ in the obvious way.  Also define a statistic 
$\epsilon$ on $\OP^r_n$ by letting $\epsilon(\sigma)$ be the sum of the colors of the letters in $\sigma$.  We define
a statistic $\inv$ on $\OP^r_n$ by
\begin{equation}
\inv(\sigma) := r \cdot \inv(d(\sigma)) + \epsilon(\sigma),
\end{equation}
where $\inv(d(\sigma))$ is computed as in Section~\ref{Permutations}.
For example, if $\sigma = (1^1 2^2 4^0 | 7^2 | 8^1 9^2 | 3^1 5^1 6^2) \in \OP^3_9$, we have
\begin{equation*}
d(\sigma) = (124|7|89|356),
\end{equation*}
so that
\begin{equation*}
\inv(\sigma) = 3 (1+1+2)+(1+2+0+2+1+2+1+1+2) = 24.
\end{equation*}

We have the usual host of generating functions:

\begin{equation}
\begin{array}{cc}
\begin{cases}
I_{n}^r(q) := \sum_{\sigma \in \OP^r_n} q^{\inv(\sigma)}, \\
I_{n, k}^r(q) := \sum_{\sigma \in \OP^r_{\beta, k}} q^{\inv(\sigma)}, \\
I_{n, \alpha}^r(q) := \sum_{\sigma \in \OP^r_{n, \alpha}} q^{\inv(\sigma)},
\end{cases} & 
\begin{cases}
M^r_{n}(q) := \sum_{\sigma \in \OP^r_{n}} q^{\minimaj(\sigma)}, \\
M^r_{n, k}(q) := \sum_{\sigma \in \OP^r_{n, k}} q^{\minimaj(\sigma)}, \\
M^r_{n, \alpha}(q) := \sum_{\sigma \in \OP^r_{n, \alpha}} q^{\minimaj(\sigma)}.
\end{cases}
\end{array}
\end{equation}

\subsection{$M^r_{n,\alpha}(q) = I^r_{n,\alpha}(q) = ([r]_q)^n F_{n,\alpha}(q^r)$}

Recall the polynomial $F_{n,\alpha}(q)$ from Section~\ref{Permutations}.
We will prove a natural generalization of the equidistribution result of 
Theorem~\ref{same-distribution-permutations} to colored permutations.
Our approach here will be to reduce the colored result to the uncolored case of 
Theorem~\ref{same-distribution-permutations}.  In the next subsection we will see that trying to
mimic the proof of Theorem~\ref{same-distribution-permutations} itself leads to a 
strange and difficult polynomial identity.

\begin{theorem}
\label{same-distribution-colored}  Let $r, n \geq 1$ and let $\alpha \models n$ be a composition.  We have
\begin{equation}
I^r_{n, \alpha}(q) = M^r_{n, \alpha}(q) = ([r]_q)^n F_{n,\alpha}(q^r).
\end{equation}
\end{theorem}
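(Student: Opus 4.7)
The plan is to deduce both equalities of Theorem~\ref{same-distribution-colored} from the uncolored equidistribution $M_{n,\alpha}(q) = I_{n,\alpha}(q) = F_{n,\alpha}(q)$ of Theorem~\ref{same-distribution-permutations}, by exhibiting suitable bijections $\OP^r_{n,\alpha} \longleftrightarrow \OP_{n,\alpha}\times\{0,1,\dots,r-1\}^n$.

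For the $\inv$ side, the first step is to use the ``decolorize'' bijection $\sigma\mapsto (d(\sigma),(c_1,\ldots,c_n))$, where $c_i$ is the color of the integer label $i$ in $\sigma$. The definition of $\inv$ on $\OP^r_n$ already bakes in the decomposition $\inv(\sigma)=r\cdot\inv(d(\sigma))+\epsilon(\sigma)$, so summing over this bijection and invoking Theorem~\ref{same-distribution-permutations} immediately yields $I^r_{n,\alpha}(q) = ([r]_q)^n I_{n,\alpha}(q^r) = ([r]_q)^n F_{n,\alpha}(q^r)$.

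The $\minimaj$ side is the heart of the matter. A small calculation shows that $\minimaj(\sigma)\neq r\cdot\minimaj(d(\sigma))+\epsilon(\sigma)$ in general, so the naive decolorizing bijection does not suffice. Instead, I would introduce the \emph{$\prec$-standardization} $\sigma\mapsto\sigma^*$: the $n$ letters of $\sigma$ form a subset of $\AAA_{n,r}$ totally ordered by $\prec$; relabel them $1,2,\ldots,n$ according to their $\prec$-rank to produce an ordinary ordered set partition $\sigma^*\in\OP_{n,\alpha}$. Together with the color sequence $(c_1,\ldots,c_n)$, the map $\sigma\mapsto(\sigma^*,(c_1,\ldots,c_n))$ is a bijection $\OP^r_{n,\alpha}\leftrightarrow\OP_{n,\alpha}\times\{0,1,\dots,r-1\}^n$.

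The main step is to prove the compatibility
$$\minimaj(\sigma)\;=\;r\cdot\minimaj(\sigma^*)+\epsilon(\sigma).$$
The $\prec$-standardization identifies the set $C\wr S(\sigma)$ with $S(\sigma^*)$ by sending $\pi$ to the permutation $\pi^*$ whose entries are the $\prec$-ranks of $\pi$'s entries; since $\prec$-rank preserves $\prec$-order, the positions of $\prec$-descents of $\pi$ coincide with the descents of $\pi^*$. The color summand $\sum_i\epsilon_i$ in the definition of $\maj$ on $C_r\wr S_n$ is invariant over $C\wr S(\sigma)$ and equals $\epsilon(\sigma)$, while the remaining weighted $\prec$-descent term equals $r\cdot\maj(\pi^*)$; hence $\maj(\pi)=r\cdot\maj(\pi^*)+\epsilon(\sigma)$ for every $\pi\in C\wr S(\sigma)$, and minimizing yields the displayed identity. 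Summing across the bijection and applying Theorem~\ref{same-distribution-permutations} produces
$$M^r_{n,\alpha}(q)\;=\;([r]_q)^n M_{n,\alpha}(q^r)\;=\;([r]_q)^n F_{n,\alpha}(q^r),$$
completing the proof. There is no serious technical obstacle; the only conceptual hurdle is noticing that the standardization appropriate for $\minimaj$ is by $\prec$-rank rather than by integer part, so that the color data factors out cleanly.
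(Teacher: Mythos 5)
Your proposal is correct and follows essentially the same route as the paper: the $\inv$ side is handled by the decolorizing map $d$ exactly as in the paper, and the $\minimaj$ side uses the same $\prec$-standardization map (the paper's $s$) together with the same key identity $\minimaj(\sigma) = r\cdot\minimaj(s(\sigma)) + \epsilon(\sigma)$, then reduces to Theorem~\ref{same-distribution-permutations}. The only cosmetic difference is bookkeeping: you sum directly over color sequences, whereas the paper groups fibers by color multiplicities $\gamma$ and invokes the Multinomial Theorem.
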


\begin{proof}
We start by proving that $I^r_{n, \alpha}(q) = ([r]_q)^n F_{\alpha}(q^r)$.  Consider a fixed uncolored 
ordered set partition
$\sigma_0 \in \OP_{n, \alpha}$.  We have that 
$\sum_{\sigma} q^{\inv(\sigma)} = ([r]_q)^n q^{r ( \inv(\sigma_0))}$,
where the sum is over all colored ordered set partitions $\sigma \in \OP^r_{n, \alpha}$ satisfying
$d(\sigma) = \sigma_0$.  The identity $I^r_{n, \alpha}(q) = ([r]_q)^n F_{\alpha}(q^r)$ follows from summing over all
$\sigma_0 \in \OP_{n, \alpha}$ and applying 
Theorem~\ref{same-distribution-permutations}.

Next, we prove that $M^r_{n, \alpha}(q) = ([r]_q)^n F_{\alpha}(q^r)$.  To see this, we define a standardization map
$s: \OP^r_{n, \alpha} \rightarrow \OP_{n, \alpha}$ as follows.  Let $\sigma \in \OP^r_{n, \alpha}$ be a colored
ordered set partition and consider its elements with respect to the order $\prec$ on the alphabet
$\mathcal{A}_{n, r}$.  Let $s(\sigma)$ be 
the unique element of $\OP_{n, \alpha}$ which is order isomorphic to $\sigma$ under $\prec$.
For example, we have
\begin{equation*}
s(1^1 2^2 4^0 | 7^2 | 8^1 9^2 | 3^1 5^1 6^2) = (519|3|84|672) = (159|3|48|267).
\end{equation*}

Let $\sigma_0 \in \OP_{n, \alpha}$ be an uncolored ordered set partition and let 
$\gamma = (\gamma_0, \gamma_1, \dots, \gamma_{r-1}) \models_0 n$ be a weak composition of $n$ of length
$r$.  If $\sigma \in \OP^r_{n, \alpha}$ satisfies $s(\sigma) = \sigma_0$ and has $\gamma_i$
letters of color $i$ for all $i$, we have that 
\begin{equation}
\minimaj(\sigma) = r (\minimaj(\sigma_0)) + \sum_{i = 0}^{r-1} i \gamma_i.
\end{equation}

For a fixed $\sigma_0 \in \OP_{n, \alpha}$ and a fixed color multiplicity sequence
$\gamma = (\gamma_0, \gamma_1, \dots, \gamma_{r-1}) \models_0 n$, how many 
ordered multiset partitions lie in the set
\begin{equation*}
\{ \sigma \in \OP^r_{n,\alpha} \,:\, 
\text{$s(\sigma) = \sigma_0$ and $\sigma$ has $\gamma_i$ letters of color $i$ for $0 \leq i \leq r-1$} \}?
\end{equation*}
For any color $i$ and any block $B_j$, the number of entries 
in $B_j$ of color $i$ must equal the number of entries in the $j^{th}$ block of $\sigma_0$ which lie in the
interval $[\gamma_1 + \cdots + \gamma_{i-1} + 1, \gamma_1 + \cdots + \gamma_i]$.
To determine $\sigma$, we need only assign $\gamma_i$ entries in $[n]$ to have color $i$ in $\sigma$,
for all $0 \leq i \leq r-1$.  The number of ways to do this is the multinomial coefficient
${n \choose \gamma_0, \dots, \gamma_{r-1}}$.  We conclude that
\begin{equation*}
M^r_{n, \alpha}(q) = 
\sum_{(\gamma_0, \dots, \gamma_{r-1}) \models_0 n} {n \choose \gamma_0, \dots, \gamma_{r-1}}
q^{\sum_{i = 0}^{r-1} i \gamma_i} M_{n, \alpha}(q^r) \\
= ([r]_q)^n F_{n, \alpha}(q^r),
\end{equation*}
where the second equality used the Multinomial Theorem together with 
Theorem~\ref{same-distribution-permutations}.
\end{proof}

In light of Theorem~\ref{same-distribution-word}, it is natural to expect an equidistribution result
involving $\inv$ and $\minimaj$ on appropriately defined `colored ordered multiset partitions'
with a fixed number of blocks and a fixed weight.  
The author has been unable to produce such a result.
Equidistribution fails to hold in small examples when one considers 
sequences
of nonempty subsets of $\mathcal{A}_{n,r}$, e.g.
\begin{equation*}
(1^0 1^3 2^0 | 1^3 3^2 | 2^0 2^1 3^1).
\end{equation*}
Equidistribution also fails in small examples when one considers sequences of 
nonempty subsets of $\mathcal{A}_{n,r}$ where no set contains two copies of the same 
letter (of any color), e.g.
\begin{equation*}
(1^0 2^3 3^2 | 1^2 2^0 | 1^2 2^0 3^1).
\end{equation*}

\subsection{A Strange Polynomial Identity}
The reader may wonder whether the methods of Section~\ref{Permutations} can be used
to prove Theorem~\ref{same-distribution-colored} without relying
on Theorem~\ref{same-distribution-permutations}.  Attempting to do so leads to a
strange and complicated polynomial identity
whose truth is implied by Theorem~\ref{same-distribution-colored}, but which 
the author does not know how to prove
directly.

Lemma~\ref{cycle-major-permutations} generalizes naturally to the context of colored permutations.
We have an action of $\ZZ_{nr}$ on the alphabet $\AAA_{n,r}$ given by the cycle 
\begin{equation*}
c := (n^{0}, (n-1)^{0}, \dots, 1^{0},   n^1, (n-1)^1, \dots, 1^1,  \dots, n^{r-1}, (n-1)^{r-1}, \dots, 1^{r-1}) \in C_r \wr S_n.
\end{equation*}
Viewing $C_r \wr S_n$ as the complex reflection group $G(r,1,n)$, we can think of $c$ as the regular
element
\begin{equation*}
c = \begin{pmatrix}
 & 1 & & & \\
 & & 1 & & \\
 & & & \ddots & \\
 & & & & 1 \\
e^{\frac{2 \pi i}{r}} & & & &
\end{pmatrix}.
\end{equation*}
This action extends to an action $\pi \mapsto c.\pi$ on $C_r \wr S_n$ by setting
$c.\pi = (c.\pi_1) \dots (c.\pi_n)$ for $\pi = \pi_1 \dots \pi_n$.  The analog of Lemma~\ref{cycle-major-permutations}
continues to hold.

\begin{lemma}
\label{cycle-major-colored-permutations}
Let $\pi = \pi_1 \dots \pi_n \in C_r \wr S_n$ and assume that $\pi_n \neq 1^{r-1}$.  Then
$\maj(c.\pi) = \maj(\pi) + 1$.
\end{lemma}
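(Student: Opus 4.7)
The plan is to adapt the short proof of Lemma~\ref{cycle-major-permutations} to the colored setting by carefully tracking both contributions to the flag major index: the descent contribution $r \sum_{\pi_{i+1} \prec \pi_i} i$ and the color sum $\sum_i \epsilon_i$. The crucial observation I will exploit is that $c$ acts on $\AAA_{n,r}$ as the ``immediate $\prec$-predecessor'' map: it preserves the relative $\prec$-order on every pair of letters neither of which equals $1^{r-1}$, and wraps $1^{r-1}$ (the $\prec$-minimum) around to $n^0$ (the $\prec$-maximum). Reading the cycle definition against the ordering $1^{r-1} \prec \cdots \prec n^{r-1} \prec \cdots \prec 1^0 \prec \cdots \prec n^0$ verifies this at a glance, and it also records the effect of $c$ on colors: a letter $k^\ell$ with $k > 1$ keeps its color, a letter $1^\ell$ with $\ell < r-1$ has its color promoted by $+1$, and $1^{r-1}$ has its color reduced by $r-1$.

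I would then split into two cases depending on whether $1^{r-1}$ appears among the entries of $\pi$. If it does not, then $c$ restricts to a strictly $\prec$-order-preserving bijection on the letters of $\pi$, so $\Des(\pi) = \Des(c.\pi)$ and the descent contribution is unchanged. Since $\pi$ contains a unique copy of the letter $1$, which must be $1^j$ for some $j < r-1$, only its color changes and the color sum increases by exactly $1$, giving $\maj(c.\pi) = \maj(\pi) + 1$ in this case.

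In the second case suppose $\pi_{i_0} = 1^{r-1}$; the hypothesis $\pi_n \neq 1^{r-1}$ is used precisely to ensure $i_0 \leq n-1$. Since $1^{r-1}$ is $\prec$-smallest and $n^0$ is $\prec$-largest, applying $c$ destroys the descent at position $i_0 - 1$ (if $i_0 > 1$) and creates a new descent at position $i_0$, while leaving all other pairwise $\prec$-comparisons of adjacent entries unchanged. The descent sum therefore shifts by exactly $+1$ in both the interior case and the $i_0 = 1$ boundary case, contributing $+r$ to $\maj$. The color at position $i_0$ meanwhile drops from $r-1$ to $0$, contributing $-(r-1)$. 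The net change is $r - (r-1) = 1$, as required. There is no substantive obstacle; the only real subtlety is confirming that the coefficient $r$ in the descent term and the color-drop of $r-1$ from the single wrap-around conspire to leave a net change of exactly $+1$ in $\maj$.
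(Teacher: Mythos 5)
Your proof is correct and takes essentially the same approach as the paper's: the same case split on whether $1^{r-1}$ occurs in $\pi$, with the descent set preserved and the unique letter $1^{j}$ promoted in color by one in the first case, and the descent adjacent to $1^{r-1}$ shifting one unit to the right while the color sum drops by $r-1$ (net change $r-(r-1)=1$) in the second. Your write-up is simply more explicit about the boundary case $i_0=1$ and about why $c$, being the immediate $\prec$-predecessor map away from $1^{r-1}$, leaves all other adjacent comparisons unchanged.
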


\begin{proof}
If $\pi$ does not contain $1^{r-1}$, the application of $c$ preserves the descent set of $\pi$, sends $1^j$ to $n^{j+1}$, and preserves
all other signs.  If $\pi$ contains $1^{r-1}$, the application of $c$ moves the descent of $\pi$ ending in $1^{r-1}$ one unit to the right
while decreasing the sum of signs by $r-1$, resulting in a net increase of $\maj$ by $1$.
\end{proof}

The map $\pi: \OP_n \rightarrow S_n$ of Section~\ref{Permutations} extends easily to define a map
$\pi: \OP_n^r \rightarrow C_r \wr S_n$ for $r > 1$.  Given $(B_1 | \dots | B_k) \in \OP^r_{n,k}$, the $k$-segmented word
$\pi(B_1 | \dots | B_k) = \pi[1] \cdot \ldots \cdot \pi[k] \in C \wr S(B_1 | \dots | B_k)$ is defined using 
Definition~\ref{segment-pi-map}
with respect to the  order $\prec$ on the alphabet $\AAA_{n,r}$.  
For example, if $\sigma = (1^1 2^2 4^0 | 7^2 | 8^1 9^2 | 3^1 5^1 6^2) \in \OP^3_9$, we have
\begin{equation*}
\pi(\sigma) = \pi[1] \cdot \pi[2] \cdot \pi[3] \cdot \pi[4]  = 1^1 4^0 2^2 \cdot 7^2 \cdot 9^2 8^1 \cdot 6^2 3^1 5^1.
\end{equation*}

The analog of Lemma~\ref{minimaj-alternative} is still true in this setting.

\begin{lemma}
\label{minimaj-alternative-colored}
Let $\sigma$ be a $C_r \wr S_n$-ordered set partition.  The statistic $\maj$ achieves a unique minimum on the set
$C \wr S(\sigma)$, and this value is achieved at $\pi(\sigma)$.
\end{lemma}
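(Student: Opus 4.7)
The plan is to reduce the statement to its uncolored counterpart, Lemma~\ref{minimaj-alternative}, working throughout with the order $\prec$ on $\AAA_{n,r}$ in place of the standard order on $[n]$. The key point is that the only substantive difference between the colored and uncolored settings, beyond passing from $<$ to $\prec$, is the color-sum term in $\maj$, which turns out to be an invariant of $C \wr S(\sigma)$.

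First I would observe that the contribution $\sum_{i=1}^n \epsilon_i$ of colors to $\maj(\pi)$ depends only on the multiset of colored letters appearing in $\pi$, not on their order. Since $C \wr S(\sigma)$ is obtained from $\sigma$ by permuting colored letters \emph{within} blocks, this color sum is constant across $C \wr S(\sigma)$. Consequently, minimizing $\maj$ over $C \wr S(\sigma)$ reduces to minimizing the $\prec$-descent position sum $\sum_{\pi_{i+1} \prec \pi_i} i$. Since each underlying letter $1, \ldots, n$ appears exactly once in $\sigma$ (with a single color), the alphabet carried by $\sigma$ is totally ordered by $\prec$, and the problem becomes formally identical to the uncolored one of Lemma~\ref{minimaj-alternative}, with $\prec$ replacing the standard order. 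The colored map $\pi$ was defined via Definition~\ref{segment-pi-map} using precisely $\prec$, so its output $\pi(\sigma)$ coincides with the unique minimizer supplied by that lemma.

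The hard part will be verifying that the proof of Lemma~\ref{minimaj-alternative}, left to the reader in the uncolored case, genuinely goes through verbatim when the standard order is replaced by an arbitrary total order on the alphabet. I plan to handle this by induction on the number $k$ of blocks, working from right to left. Within a single block $B_i$, fixing the first letter $r$ of the already-constructed next segment, the $\pi$-construction places the elements of $B_i$ that $\prec$-exceed $r$ first in $\prec$-increasing order, followed by those $\preceq r$ also in $\prec$-increasing order. A short exchange argument on pairs of adjacent entries shows that this is the unique arrangement of $B_i$ minimizing its contribution to the descent-position sum, producing at most one internal descent and no descent with the next segment. Since colors enter the analysis only through $\prec$, no genuinely new phenomenon appears, and the global minimum and its uniqueness follow by induction.
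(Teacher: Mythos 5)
Your reduction is correct and is essentially the paper's (implicit) treatment: the paper states this lemma with no proof, presenting it as the direct analog of Lemma~\ref{minimaj-alternative}, and your two observations -- that the color term $\sum_i \epsilon_i$ is constant on $C \wr S(\sigma)$ because permuting letters within blocks does not change which colored letters occur, and that the remaining $\prec$-descent-sum minimization is order-isomorphic to the uncolored problem with $\pi(\sigma)$ built from Definition~\ref{segment-pi-map} relative to $\prec$ -- are exactly the intended content. One inaccuracy in your sketch of the order-generalized uncolored lemma: the constructed segment $\pi[i]$ does \emph{not} always avoid a descent with the next segment. When every element of $B_i$ strictly $\prec$-exceeds the first letter $r$ of $\pi[i+1]$ (the case $m=0$ in Definition~\ref{segment-pi-map}), the segment is $B_i$ in increasing order and ends in its maximum, which does exceed $r$; for instance $\pi(25|1|34) = 25 \cdot 1 \cdot 34$ has a descent at the boundary of the first two segments. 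So your exchange/induction argument should be phrased as: any arrangement of $B_i$ incurs at least one descent at or after the position where the constructed arrangement has its (single, possibly boundary) descent, with equality of contribution only for the constructed arrangement; with that adjustment, and since colors enter only through $\prec$, your plan goes through.
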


The analog of Lemma~\ref{compress} also 
holds for colored set partitions.  Let $(C_1 | \dots | C_k)$ be a sequence of nonempty
subsets of $\AAA_{n,r}$ with the property that 
at most one of $i^0, i^1, \dots, i^{r-1}$ appears among the sets $C_1, \dots, C_k$,
for all $1 \leq i \leq n$. Let $\overline{(C_1 | \dots | C_k)}$ be the unique $C_r \wr S_{n'}$-ordered set partition
(for some $n' \leq n$) whose associated uncolored ordered set partition is order isomorphic to 
the uncolored sequence of sets $(C_1 | \dots | C_k)$, while preserving colors.  For example, we have
\begin{equation*}
\overline{(2^3 7^0 9^1 | 5^1 | 3^1 8^0)} = (1^3 4^0 6^1 | 3^1 | 2^1 5^0 ).
\end{equation*}

\begin{lemma}
\label{compress-colored}
Let $(B_1 | \dots | B_{k-1} | B_k)$ be a $C_r \wr S_n$-ordered set partition.  Write the elements of $B_k$  
as $B_k = \{ \beta_1 \prec \dots \prec \beta_m \}$ and write $\epsilon(\beta_i)$ for the color of 
$\beta_i$ for $1 \leq i \leq m$.  We have that
\begin{equation*}
\minimaj(B_1 | \dots | B_{k-1} | B_k) = \minimaj \overline{(B_1 | \dots | B_{k-1} | \beta_1)} + \epsilon(\beta_2) + \cdots + \epsilon(\beta_m).
\end{equation*}
\end{lemma}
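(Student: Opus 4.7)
The plan is to invoke Lemma~\ref{minimaj-alternative-colored} to replace each $\minimaj$ by the $\maj$ of the corresponding segmented word, and then to compare $\maj(\pi(\sigma))$ directly with $\maj(\pi(\sigma'))$, where $\sigma := (B_1 | \dots | B_{k-1} | B_k)$ and $\sigma' := \overline{(B_1 | \dots | B_{k-1} | \beta_1)}$. The argument parallels the proof of Lemma~\ref{compress}, with one extra bookkeeping term coming from the color contribution $\sum_i \epsilon_i$ appearing in the flag major index.

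First I would observe that the terminal segment of $\pi(\sigma) = \pi[1] \cdot \ldots \cdot \pi[k]$ is $\pi[k] = \beta_1 \beta_2 \dots \beta_m$, namely the elements of $B_k$ listed in increasing $\prec$-order. In particular this terminal segment contains no descent positions. Moreover, by Definition~\ref{segment-pi-map} applied with respect to $\prec$, the earlier segments $\pi[1], \dots, \pi[k-1]$ depend only on the first letter $\beta_1$ of $\pi[k]$, and so are unaffected if we replace the last block by the singleton $\{\beta_1\}$.

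Next I would check that the standardization step commutes with $\pi$ in the required sense. Since standardization is a $\prec$-order isomorphism that preserves colors, and since $\pi$ is defined purely in terms of the $\prec$-order and colors, the segmented word $\pi(\sigma')$ is precisely the standardization of the segmented word obtained from $\pi(\sigma)$ by truncating the terminal segment $\pi[k]$ down to its first letter $\beta_1$. Standardization preserves both descent positions (since $\prec$ is preserved) and colors, hence it preserves $\maj$.

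Finally, comparing $\maj(\pi(\sigma))$ with the $\maj$ of its truncation: the descent contribution $r \sum_{\pi_{i+1} \prec \pi_i} i$ is unchanged, since the deleted positions $\beta_2, \dots, \beta_m$ all lie strictly inside the terminal segment where there are no descents, and the boundary between segment $k-1$ and segment $k$ depends only on $\beta_1$; meanwhile the color contribution $\sum_i \epsilon_i$ decreases by exactly $\epsilon(\beta_2) + \cdots + \epsilon(\beta_m)$ upon deletion. Combining these observations via Lemma~\ref{minimaj-alternative-colored} yields the claimed identity. The only mildly delicate point is the verification that no descent sits at the boundary between segments $k-1$ and $k$ which the truncation could disturb, but as noted this boundary is determined purely by $\beta_1$ and is therefore identical in $\pi(\sigma)$ and in the truncated word.
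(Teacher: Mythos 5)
Your proposal is correct and follows essentially the same route as the paper: the paper proves this by repeating the argument of Lemma~\ref{compress} (truncating the terminal segment of $\pi(\sigma)$, which is $\prec$-increasing and leaves all earlier segments and descent positions untouched since they depend only on $\beta_1$) and then accounting for the color term in the flag major index, which is exactly your bookkeeping of $\epsilon(\beta_2)+\cdots+\epsilon(\beta_m)$ via Lemma~\ref{minimaj-alternative-colored}. Your additional observation that standardization preserves the $\prec$-order and colors, hence commutes with the map $\pi$ and preserves $\maj$, is a point the paper leaves implicit but is verified correctly.
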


\begin{proof}
Similar to the proof of Lemma~\ref{compress}, except one must take into account the contribution of colors to the $\minimaj$ statistic.
\end{proof}

Lemma~\ref{cycle-commute-permutations} generalizes in the obvious way.

\begin{lemma}
\label{cycle-minimizers-colored}
Let $\sigma \in \OP^r_{n,\alpha}$ be a $C_r \wr S_n$-ordered set partition of type $\alpha = (\alpha_1, \dots, \alpha_k) \models n$ with
$\alpha_k = 1$.  We have
\begin{equation}
\pi(c.\sigma) = c.\pi(\sigma).
\end{equation}
\end{lemma}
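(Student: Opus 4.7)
The plan is to mirror the proof of Lemma~\ref{cycle-commute-permutations} almost verbatim in the colored setting, using the $\prec$-order on $\AAA_{n,r}$ in place of the usual order on $[n]$. The essential observation that makes this possible is that the cycle $c$ acts on $\AAA_{n,r}$ as a strict $\prec$-decrement at every letter \emph{except} the $\prec$-smallest letter $1^{r-1}$, which it sends to the $\prec$-largest letter $n^0$. Thus the role played in the uncolored proof by the element $1 \in [n]$ is here played by $1^{r-1}$, and the role played by $n$ is played by $n^0$.

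First I would dispense with the easy case: if $1^{r-1}$ does not occur in $\sigma$, then $c$ restricts to a strictly $\prec$-order-preserving map on the letters of $\sigma$; since Definition~\ref{segment-pi-map} depends only on the relative $\prec$-order of letters within and between consecutive blocks, we get $\pi(c.\sigma) = c.\pi(\sigma)$ immediately. Otherwise, let $B_{i_0}$ be the (unique) block containing $1^{r-1}$, write $\pi(\sigma) = \pi[1] \cdot \pi[2] \cdot \ldots \cdot \pi[k]$ and $\pi(c.\sigma) = \pi'[1] \cdot \pi'[2] \cdot \ldots \cdot \pi'[k]$, and prove $\pi'[i] = c.\pi[i]$ by induction on $i$ running from $k$ down to $1$. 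The base case $i = k$ is immediate because $\alpha_k = 1$ forces both $\pi[k]$ and $\pi'[k]$ to be a single letter. For $i_0 < i < k$, no relevant letter is $1^{r-1}$, so $c$ is $\prec$-order preserving on $B_i \cup \{\text{first letter of } \pi[i+1]\}$ and the index $m$ appearing in Definition~\ref{segment-pi-map} is the same for $\pi[i]$ and $\pi'[i]$.

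The main work occurs at $i = i_0$. Writing $B_{i_0} = \{j^{(i_0)}_1 \prec \cdots \prec j^{(i_0)}_{\alpha_{i_0}}\}$ with $j^{(i_0)}_1 = 1^{r-1}$, and letting $r$ be the first letter of $\pi[i_0+1]$ (so $c.r$ is the first letter of $\pi'[i_0+1]$), one has to compare the cyclically shifted ordering of $c.B_{i_0}$, in which $c.1^{r-1} = n^0$ becomes $\prec$-largest, against $B_{i_0}$. I would split into the subcases $m < \alpha_{i_0}$ and $m = \alpha_{i_0}$ for the index $m$ in Definition~\ref{segment-pi-map}: in the former the corresponding index for $\pi'[i_0]$ is $m-1$, in the latter it is $\alpha_{i_0}-1$ (using that $r \neq 1^{r-1}$ forces $c.r \prec n^0$), and a direct expansion in each subcase yields $\pi'[i_0] = c.\pi[i_0]$.

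The hard part will be the transition from $i = i_0$ to $i < i_0$ in the subcase $m = \alpha_{i_0}$, because there the first letter of $\pi[i_0]$ is $1^{r-1}$ while the first letter of $\pi'[i_0]$ is $n^0$ — the two opposite $\prec$-extremes — so one cannot simply argue that $c$ is $\prec$-preserving. However, both extremes trigger boundary behavior of the recursion: the maximal index defining $\pi[i_0-1]$ is $m = 0$ (since nothing in $B_{i_0-1}$ is $\preceq 1^{r-1}$), while the maximal index defining $\pi'[i_0-1]$ is $m = \alpha_{i_0-1}$ (since everything in $c.B_{i_0-1}$ is $\preceq n^0$). A short computation shows that these two extreme choices produce the same cyclic rotation of $B_{i_0-1}$ up to the action of $c$, so $\pi'[i_0-1] = c.\pi[i_0-1]$. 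For all $i < i_0 - 1$, no relevant letter is $1^{r-1}$ and the argument of the $i > i_0$ case finishes the induction.
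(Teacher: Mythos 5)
Your proposal is correct and follows essentially the same route as the paper, which proves this lemma by adapting the proof of Lemma~\ref{cycle-commute-permutations} with $1^{r-1}$ and $n^0$ playing the roles of $1$ and $n$ in the $\prec$-order. Your explicit handling of the case where $1^{r-1}$ does not occur in $\sigma$ (which has no analogue in the uncolored setting) and of the two boundary rotations at the block $B_{i_0-1}$ correctly fills in the details the paper leaves to the reader.
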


\begin{proof}
Similar to the proof of Lemma~\ref{cycle-commute-permutations}.
\end{proof}

Let $\alpha = (\alpha_1, \dots, \alpha_k) \models n$ and let
$\overline{\alpha} := (\alpha_1, \dots, \alpha_{k-1}) \models n - \alpha_k$.
By Theorem~\ref{same-distribution-colored}, we have the recursion
\begin{equation*}
M^r_{\alpha,n}(q) = ([r]_q)^{\alpha_k} \times 
\left[ {\alpha_k - 1 \choose \alpha_k - 1} + {\alpha_k \choose \alpha_k - 1}q^r + \cdots + {n-1 \choose \alpha_k - 1} q^{r(n-\alpha_k)} \right] 
\times M^r_{\overline{\alpha},n}(q).
\end{equation*}

On the other hand, we can derive a recursion for $M^r_{\alpha, n}(q)$ combinatorially.
Fix a letter $i^j \in \AAA_{n,r}$ for $1 \leq i \leq n$ and $0 \leq j \leq r-1$.
Let $\sigma = (B_1 | \cdots | B_{k-1} | B_k) \in \OP^r_{n,\alpha}$ be a typical
$C_r \wr S_n$-ordered set partition such that the minimum element $\min(B_k)$ (with respect to $\prec$) is $i^j$.  
The choices involved in the completion of the block $B_k$ are as follows:
\begin{enumerate}
\item  Choose a subset $S \subseteq \{i+1, i+2, \dots, n\}$ of size $t$ (for some $0 \leq t \leq \alpha_k - 1$).
\item  Choose a subset $T \subseteq \{1, 2, \dots, i-1\}$ of size $\alpha_k - t - 1$.
\item  Color the elements of $S$ with $\{0, 1, 2, \dots, j\}$ arbitrarily.
\item  Color the elements of $T$ with $\{0, 1, 2, \dots, j-1\}$ arbitrarily.
\end{enumerate}
For a fixed choice of $S$ and $T$ (and hence $t$) as above, the net contribution to $\minimaj(\sigma)$ coming from letter
colors in $B_k - \{\min(B_k)\}$ will be $[j]_q^{\alpha_k-t-1} [j+1]_q^t$.  On the other hand, 
Lemmas~\ref{cycle-major-colored-permutations}, \ref{compress-colored}, and \ref{cycle-minimizers-colored} imply that 
the choice of $\min(B_k) = i^j$ contributes an additional $j(n-\alpha_k+1) + (n-i-t)$ to $\minimaj(\sigma)$.  
We have ${n - i \choose t}$ choices for the set $S$ and ${i - 1 \choose \alpha_k - t - 1}$ choices for the set $T$.
Putting all of this together,
we get that 
\begin{multline}
M^r_{\alpha,n}(q) = M^r_{\overline{\alpha},n-\alpha_k}(q) \times \\
\sum_{i = 1}^n \sum_{j = 0}^{r-1} \sum_{t = 0}^{\alpha_k-1} {n-i \choose t} {i-1 \choose \alpha_k - t - 1}
q^{j(n-\alpha_k+1) + (n-i-t)} [j]_q^{\alpha_k-t-1} [j+1]_q^t.  
\end{multline}

The following polynomial identity follows from (and is equivalent to) Theorem~\ref{same-distribution-colored}.

\begin{proposition}
\label{polynomial-identity}
For all $1 \leq \alpha_k \leq n$ and all $r \geq 1$ we have
\begin{multline}
([r]_q)^{\alpha_k} \times 
\left[ {\alpha_k - 1 \choose \alpha_k - 1} + {\alpha_k \choose \alpha_k - 1}q^r + \cdots + {n-1 \choose \alpha_k - 1} q^{r(n-\alpha_k)} \right] = \\
\sum_{i = 1}^n \sum_{j = 0}^{r-1} \sum_{t = 0}^{\alpha_k-1} {n-i \choose t} {i-1 \choose \alpha_k - t - 1}
q^{j(n-\alpha_k+1) + (n-i-t)} [j]_q^{\alpha_k-t-1} [j+1]_q^t.  
\end{multline}
\end{proposition}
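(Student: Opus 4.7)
The plan is to derive the identity by equating two recursive formulas for the quotient $M^r_{n,\alpha}(q) / M^r_{n-\alpha_k, \overline{\alpha}}(q)$, where $\overline{\alpha} = (\alpha_1, \ldots, \alpha_{k-1})$.

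First I would invoke Theorem~\ref{same-distribution-colored}, which gives $M^r_{n,\alpha}(q) = ([r]_q)^n F_{n,\alpha}(q^r)$. Applying Equation~\ref{F-recursion} for $F_{n,\alpha}$ with $q$ replaced by $q^r$, and using the factorization $([r]_q)^n = ([r]_q)^{n-\alpha_k}\,([r]_q)^{\alpha_k}$, produces
\begin{equation*}
\frac{M^r_{n,\alpha}(q)}{M^r_{n-\alpha_k,\overline{\alpha}}(q)} = ([r]_q)^{\alpha_k}\left[\binom{\alpha_k - 1}{\alpha_k - 1} + \binom{\alpha_k}{\alpha_k - 1} q^r + \cdots + \binom{n-1}{\alpha_k - 1} q^{r(n-\alpha_k)}\right],
\end{equation*}
which is the LHS of Proposition~\ref{polynomial-identity}.

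Second, the discussion immediately preceding the proposition already supplies an independent formula for this quotient. Conditioning on the minimum letter $\min(B_k) = i^j$ (with respect to $\prec$) of the final block and on the number $t$ of elements of $B_k$ that exceed $i$, and invoking the colored analogs of the cycling and compression lemmas (Lemmas~\ref{cycle-major-colored-permutations}, \ref{compress-colored}, \ref{cycle-minimizers-colored}) to track the contribution of the minimum letter to $\minimaj$, produces exactly the RHS of Proposition~\ref{polynomial-identity}. Equating the two expressions and canceling the nonzero polynomial $M^r_{n-\alpha_k,\overline{\alpha}}(q)$ yields the identity.

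The hard part is giving a \emph{direct} algebraic proof that does not route through Theorem~\ref{same-distribution-permutations}. Using the relations $[j+1]_q - 1 = q[j]_q$ and $q[j+1]_q - q[j]_q = q^{j+1}$, the inner double sum over $(i,t)$ on the right-hand side collapses via geometric summation in $i$ to
\begin{equation*}
q^{1 - \alpha_k} \sum_{j=0}^{r-1} q^{j(n - \alpha_k + 1)} \{z^{\alpha_k - 1}\} \frac{q^n (1 + [j+1]_q z)^n - (1 + q[j]_q z)^n}{(q-1) + q^{j+1} z},
\end{equation*}
where $\{z^m\}$ extracts the coefficient of $z^m$. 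However, matching this expression against the LHS does not appear to reduce to any standard basic hypergeometric identity: the binomials in the original sum are classical rather than $q$-binomial, and I see no obvious partial-fraction decomposition in $z$ that would pair each $j$-term with a monomial $q^{rs}$ on the LHS. Finding such a matching---or some other route avoiding Theorem~\ref{same-distribution-permutations}---is, consistent with the author's remark, the main open obstacle.
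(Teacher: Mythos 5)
Your proposal is correct and is essentially the paper's own argument: both sides are identified with the quotient $M^r_{n,\alpha}(q)/M^r_{n-\alpha_k,\overline{\alpha}}(q)$, the left via Theorem~\ref{same-distribution-colored} together with the recursion (\ref{F-recursion}), and the right via the combinatorial conditioning on $\min(B_k)=i^j$ using Lemmas~\ref{cycle-major-colored-permutations}, \ref{compress-colored}, and \ref{cycle-minimizers-colored}. Your closing attempt at a direct algebraic proof is not needed for the statement; the paper likewise leaves a direct proof as an open challenge.
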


We challenge the reader to prove Proposition~\ref{polynomial-identity} directly!

\section{Closing Remarks}
\label{Closing}

In this paper we have applied the combinatorics of ordered set partitions to obtain evidence for
the Delta Conjecture of symmetric function theory.  Along the way to doing so, we discovered a new
family of polynomials
\begin{equation*}
F_{n,\alpha}(q) 
 = \prod_{i = 1}^k {\alpha_i - 1 \choose \alpha_i - 1} + {\alpha_i \choose \alpha_i - 1} q
 + {\alpha_i + 1 \choose \alpha_i - 1} q^2 + \cdots + 
 {\alpha_1 + \cdots + \alpha_i - 1 \choose \alpha_i - 1} q^{\alpha_1 + \cdots + \alpha_{i -1} }. 
 %= \prod_{i = 1}^k \frac{\partial_q^{\alpha_i - 1}}{(\alpha_i - 1)!} [\alpha_1 + \cdots + \alpha_i]_q.
\end{equation*}
indexed by compositions $\alpha = (\alpha_1, \dots, \alpha_k) \models n$.
We pose the problem of finding an algebraic interpretation of these polynomials.

Consider the action of the symmetric group $S_n$ on the polynomial ring
$\CC[x_1, \dots, x_n]$ given by variable permutation.  Let 
$I$ be the ideal generated by $S_n$-invariant polynomials 
$f(x_1, \dots, x_n) \in \CC[x_1, \dots, x_n]$ with vanishing constant term.  The 
{\em coinvariant algebra} is the graded $\CC$-vector space 
$R_n := \CC[x_1, \dots, x_n]/I$.  The space $R_n$ carries a graded action of $S_n$;
it can be shown that $R_n \cong \CC[S_n]$ as $S_n$-representations.

One natural $\CC$-basis of $R_n$ (the so-called {\em Artin basis}) consists of those monomials
$x_1^{a_1} x_2^{a_2} \cdots x_n^{a_n}$ whose exponent sequence 
$(a_1, a_2, \dots, a_n)$ satisfies $a_i < i$.  Said differently, the multiplication map gives
rise to an isomorphism of graded $\CC$-vector spaces
\begin{equation*}
\CC[x_1]_{\leq 0} \otimes \CC[x_2]_{\leq 1} \otimes \cdots \otimes \CC[x_n]_{\leq n-1} 
\xrightarrow{\sim} R_n.
\end{equation*}
Here $A_{\leq d}$ denotes the subspace of a graded $\CC$-algebra $A$ consisting
of elements of degree $\leq d$.
This shows that the Hilbert series $\Hilb(R_n; q)$
of $R_n$ is $[n]!_q = F_{n,(1^n)}(q)$.

For an arbitrary composition $\alpha \models n$, we have that $F_{n,\alpha}(q)$ is the generating function
for the degree statistic on monomials
$x_1^{a_1} \dots x_n^{a_n}$ whose exponent sequences $(a_1, \dots, a_n)$ satisfy
\begin{equation*}
a_{\alpha_1 + \cdots + \alpha_{i-1} + 1} + \cdots + a_{\alpha_1 + \cdots + \alpha_i} \leq \alpha_1 + \cdots + \alpha_i
\end{equation*}
for all $1 \leq i \leq k$.  Said differently, the polynomial $F_{n,\alpha}(q)$ is the Hilbert series of the 
graded $\CC$-vector space
\begin{equation*}
S_{n,\alpha} := \CC[x_1, \dots, x_{\alpha_1}]_{\leq 0} \otimes 
\CC[x_{\alpha_1 + 1}, \dots, x_{\alpha_1 + \alpha_2}]_{\leq \alpha_1} \otimes \cdots \otimes
\CC[x_{n-\alpha_k+1}, \dots, x_n]_{\leq n - \alpha_k}.
\end{equation*}

The Artin basis is one of several interesting bases for the coinvariant algebra $R_n$.  
The {\em descent monomial basis}
$\{d_{\pi} \,:\, \pi \in S_n\}$ introduced by Garsia and Stanton \cite{GarsiaStanton} has elements given by
\begin{equation*}
d_{\pi} = \prod_{i \in \mathrm{Des}(\pi)} x_{\pi_1} x_{\pi_2} \cdots x_{\pi_i}.
\end{equation*}
It is evident that the degree of the monomial $d_{\pi}$ is given by $\maj(\pi)$.  The monomials 
$\{ d_{\pi} \,:\, \pi \in S_n\}$ may be extended to ordered set partitions by setting
$d_{\sigma} := d_{\pi(\sigma)}$ for any ordered set partition $\sigma$.  It is immediate that the degree
of $d_{\sigma}$ is $\minimaj(\sigma)$.

A ubiquitous basis of $R_n$ which does {\em not} consist of monomials 
is the {\em Vandermonde basis} whose elements are all of the nonzero
polynomials which can be obtained by applying the partial derivative operators 
$\partial_{x_1}, \dots, \partial_{x_n}$ to the Vandermonde polynomial 
$\prod_{1 \leq i < j \leq n} (x_i - x_j)$.
Yet another important basis is 
the {\em Schubert basis} which
 has elements given by all nonzero polynomials which can be obtained
by applying the divided difference operators 
$\sigma_1, \dots, \sigma_{n-1}$ to the monomial $x_1^{n-1} \cdots x_{n-1}^1 x_n^0$.
The degrees of Vandermonde and Schubert basis elements are closely related to the 
$\maj$ and $\inv$ statistics on the symmetric group.

\begin{problem}
Find analogs of the Vandermonde and Schubert basis for an arbitrary composition
$\alpha \models n$.  Find an analog $I_{n,\alpha}$ of the ideal $I_n \subseteq \CC[x_1, \dots, x_n]$ 
generated by homogeneous symmetric polynomials of positive degree.  The multiplication map
should give a graded $\CC$-vector space isomorphism
$S_{n,\alpha} \rightarrow R_{n,\alpha} := \CC[x_1, \dots, x_n]/I_{n,\alpha}$.   The analogs of
the Artin, descent monomial, Vandermonde, and Schubert bases should give bases of $R_{n,\alpha}$.
\end{problem}

The coinvariant algebra $R_n$ carries a graded action of $S_n$ whose graded Frobenius
character is the Hall-Littlewood polynomial $P_{(1^n)}(x;q)$.  It is well known that for 
$\lambda \vdash n$, the coefficient of $s_{\lambda}$ in the Schur expansion of 
$P_{(1^n)}(x;q)$ equals 
$\sum_{T \in \mathrm{SYT}(\lambda)} q^{\mathrm{maj}(T)}$.
This coincides with the expression appearing in Corollary~\ref{valley-frobenius}
when $k = n$.
We ask for a Delta Conjecture-style generalization of the coinvariant module.

\begin{problem}
For any $k \leq n$, find a nice graded $S_n$-module $R_{n,k}$ whose graded Frobenius character
is $\Val_{n,k}(x;q,0) = \Val_{n,k}(x;0,q)$ (after applying the involution $\omega$ which exchanges
$h_n$ for $e_n$ and complementing $q$-degree).
\end{problem}

In the course of proving the equality $\Val_{n,k}(x;q,0) = \Val_{n,k}(x;0,q)$ it was necessary to prove that
$\Val_{n,k}(x;0,q)$ is symmetric in the $x$ variables.  We accomplished this by introducing a family
$\{t_1, t_2, \dots \}$ of involutive switch maps on ordered multiset partitions which preserve
minimaj but permute weight compositions.

The {\em Bender-Knuth operators} $\{\tau_1, \tau_2, \dots \}$ are a famous collection of involutions on
semistandard Young tableaux.  Given a SSYT $T$, the operator $\tau_i$ acts on $T$ by freezing every 
instance of $i$ immediately above $(i+1)$, and then interchanging the number of $i$'s and
$(i+1)$'s in every maximal unfrozen horizontal run consisting entirely of $i$'s and $(i+1)$'s.
For example, we have (writing frozen entries in bold)

\begin{center}
\begin{Young}
, \cr
, $\tau_2:$   \cr
, \cr
\end{Young} \hspace{0.05in}
\begin{Young}
1 & 1 & {\bf 2} & 2 & 3 & 3 & 3 \cr
{\bf 2} & 2 & {\bf 3} & 4 & 4 \cr
{\bf 3} & 4 & 5
\end{Young}
\begin{Young}
, \cr
, $\mapsto$  \cr
, \cr
\end{Young}
\begin{Young}
1 & 1 & {\bf 2} & 2 & 2 & 2 & 3 \cr
{\bf 2} & 3 & {\bf 3} & 4 & 4 \cr
{\bf 3} & 4 & 5
\end{Young}.
\end{center}

Since the operator $\tau_i$ interchanges the number of $i$'s and $(i+1)$'s in any SSYT, the Bender-Knuth
operators define a family of involutions $\{t'_1, t'_2, \dots \}$ on words which preserve descent set 
(and, in particular, major index) while permuting weight.  In particular, let $w = w_1 \dots w_n$ be a word. 
We may apply Robinson-Schensted-Knuth 
insertion to $w$ to get a pair $w \mapsto (P, T)$, where $P$ and $T$ are tableaux
of the same shape, the tableau $P$ is standard, the tableau $T$ is semistandard, and $T$ has content
equal to the weight of $w$. For any $i$, let $t'_i(w)$ be the unique word whose RSK image is
$(P, \tau_i(T))$.  
The desired involutive and weight action properties of the $t'_i$ are inherited from the corresponding
properties of the $\tau_i$.

It turns out that the word operators $t_i$ and $t'_i$ do not coincide.  In fact, the switch maps $t_i$ do
not even preserve the shape of a word under RSK.  Given the importance of the RSK algorithm,
it would be interesting to extend the definition of the $t'_i$ operators to ordered multiset partitions
in such a way that 
\begin{itemize}
\item  the $t'_i$ act as involutions on ordered multiset partitions,
\item  the $t'_i$ preserve $\minimaj$, and
\item  the $t'_i$ extend appropriately defined Bender-Knuth operators under the image of 
an appropriate ordered multiset partition version of RSK.
\end{itemize}

\section{Acknowledgements}
\label{Acknowledgements}

The author is very grateful to Nantel Bergeron,
Jim Haglund, Ira Gessel, Jeff Remmel, Anne Schilling, and Andy Wilson for many 
inspiring  discussions and a great deal of constructive input on this paper.
The author was partially supported by NSF Grants
DMS-1068861 and DMS-1500838.

\end{document}